\newcommand\R{\mathbb{R} }
\newcommand\N{\mathbb{N} }
\newcommand\Z{\mathbb{Z} }
\newcommand{\pe}{\mathbin{\scaleobj{0.7}{\tikz \draw (0,0) node[shape=circle,draw,inner sep=0pt,minimum size=8.5pt] {\tiny $=$};}}}
\newcommand{\pne}{\mathbin{\scaleobj{0.7}{\tikz \draw (0,0) node[shape=circle,draw,inner sep=0pt,minimum size=8.5pt] {\tiny $\neq$};}}}
\newcommand{\pl}{\mathbin{\scaleobj{0.7}{\tikz \draw (0,0) node[shape=circle,draw,inner sep=0pt,minimum size=8.5pt] {\tiny $<$};}}}
\newcommand{\pg}{\mathbin{\scaleobj{0.7}{\tikz \draw (0,0) node[shape=circle,draw,inner sep=0pt,minimum size=8.5pt] {\tiny $>$};}}}
\newcommand{\pge}{\mathbin{\scaleobj{0.7}{\tikz \draw (0,0) node[shape=circle,draw,inner sep=0pt,minimum size=8.5pt] {\tiny $\geqslant$};}}}
\newtheorem{theorem}{Theorem}[section]
\newcommand\numberthis{\addtocounter{equation}{1}\tag{\theequation}}
\newtheorem{lemma}[theorem]{Lemma}
\newcommand{\vide}[1]{}
\newtheorem{proposition}[theorem]{Proposition}
\newtheorem{corollary}[theorem]{Corollary}
\newtheorem{remark}[theorem]{Remark}
\newtheorem*{assumption}{Assumption}
\title{Concentration around a stable equilibrium for the non-autonomous $\Phi_3^4$ model}
\author{Dimitri Faure\thanks{Institut Denis Poisson, University of Orléans, University of Tours and CNRS, France.}}
\begin{document} 
\maketitle
\begin{abstract}
    We consider time-dependent singular stochastic partial differential equations on the three-dimensional torus. These equations are only well-posed after one adds renormalization terms. In order to construct a well-defined notion of solution, one should put the equation in a more general setting. In this article, we consider the paradigm of paracontrolled distributions, and get concentration results around a stable deterministic equilibrium for solutions of non-autonomous generalizations of the $(\Phi_3^4)$ model. \textcolor{black}{Specifically, we obtain Gaussian-type tail bounds.}
\end{abstract}
\tableofcontents
\section{Introduction}
Since Hairer's landmark aticles on regularity structures \cite{H1,H2}, numerous \textcolor{black}{papers} have been written to solve existence and uniqueness problems for important singular stochastic partial differential equations (SPDEs). Singular SPDEs are characterized by the presence of undefined products as soon as one tries to solve them using \textcolor{black}{standard} methods. Indeed, the space-time white noise $\xi$ which appears in many SPDEs has the regularity of a distribution, and we expect solutions to a lot of stochastic PDEs to also have the regularity of a distribution: all non-linerarities in the equation are hence undefined, and consequently the equation itself. A classical example of singular SPDE is the $(\Phi_3^4)$ model which is written
$$\partial_t \varphi=\Delta \varphi-\varphi^3+\textcolor{black}{\varphi+}\sigma \xi$$
where \textcolor{black}{$\sigma>0$ is a parameter and $\varphi$ is expected to be only a distribution on $[0,T]\times \mathbb{T}^3$ due to the noise term. The ill-defined term is therefore $\varphi^3$, since one cannot consider the cube of a distribution.} \textcolor{black}{To solve this conceptual difficulty, one has to add to the equation new diverging terms called "renormalization terms", and prove that such a modified equation admits a solution.} \\

While Hairer's method is extremely powerful for existence and uniqueness results, it is an abstract method which is extremely general, making it less useful when one is studying quantitative properties of the solutions. In parallel with Hairer's work, Kupiainen developed a method to study SPDEs based on Wilsonian renormalization group analysis \cite{K}, while Gubinelli, Imkeller and Perkowski put forward the theory of paracontrolled distributions in \cite{GIP}, which was used in \cite{CC} to get local-in-time well-posedness results for a specific SPDE. The three methods presented have the same notion of solutions, but for the problem we study, the theory of paracontrolled distributions is the handiest one. \textcolor{black}{The main idea of paracontrolled distribution is to decompose the standard product of two functions into three separate subproducts that are easier to deal with, especially when one of the functions at hand is a distribution. For} the $(\Phi_3^4)$ model, \textcolor{black}{this method allows for} an almost explicit expression of the solutions, and since we are interested in concentration around an equilibrium results, we prefer this to abstract writings of solutions.\\

Our work is at the crossroads of two series of papers. In \cite{BN1,BN2}, the authors study concentration results around an equilibrium for non-autonomous generalizations of the $(\Phi_d^4)$ problem in dimension $d=1$ and $d=2$, and describe some bifurcation phenomena between equilibrium branches. In these dimensions, the equation is not singular ($d=1$) or the solution is a distribution but it is "almost" a continuous function ($d=2$), and therefore we can guess what renormalization terms look like without introducing new theories. In \cite{MW,MWX}, the authors study the $(\Phi_3^4)$ model, with a particularly useful diagram-based formalism. More precisely, in \cite{MW}, Mourrat and Weber get an a priori bound for the $(\Phi_3^4)$ model on the torus which is independent of the initial condition while rewriting the paracontrolled calculus associated with solving $(\Phi_3^4)$ in the new formalism. In \cite{MWX}, \textcolor{black}{Mourrat, Weber and Xu} compute explicit bounds on the moments of the Hölder norm of the standard stochastic integrals that are needed to study $(\Phi_3^4)$. \\

In this article, we are interested in getting concentration results around a deterministic \textcolor{black}{stable} equilibrium for the solution of a non-autonomous generalization of $(\Phi_d^4)$ in dimension $d=3$. \textcolor{black}{Let us denote $\varphi$ a solution to a non-autonomous generalized $(\Phi_3^4)$ equation and $\overline{\phi}$ an attractive solution to the associated deterministic PDE with the same initial condition as $\varphi$. Our} main result is that for $\sigma \ll 1$, \textcolor{black}{we have with a high probability that $\varphi$ will stay at most at a distance $\sigma$ of $\overline{\phi}$ during $[0,T]$ for an appropriate Hölder norm}.\\

We emphasize that the existence of an attractive deterministic solution is not guaranteed, and that the regime $\sigma\approx 1$ also deserves to be considered, but we highlight that concentration results around stable equilibrium are the first step towards the study of scenarii involving bifurcations. \textcolor{black}{To our knowledge, this problem has not been studied yet, the theory of singular SPDEs being a still young and mainly qualitative theory, especially when one considers non-autonomous equations. Finally, we underscore that the concentration results we eventually get are stronger than the strong deviations ones found for instance in \cite{H2}, since they hold for Hölder norms of strictly positive exponents.}
\\

The work is divided as follows for the main parts of the article: in the second section, we introduce the main notations, and state the main concentration theorems, while in the third section we prove them. With regards to the appendix, it recalls definitions and important results in the theory of Besov spaces, paracontrolled calculus and Wiener chaos.

\section{Setting and main results \label{resultats}}
\subsection{Setting and main theorems \label{dth}}
We are considering the following general equation in dimension $3$:
\begin{equation}\left\{\begin{array}{ll}&\partial_t \varphi = \Delta \varphi+F(t, \varphi)+\sigma\xi \label{mne} \\
&\varphi(0)=\varphi_0\end{array} \right. \end{equation}
where $F$ is a third degree polynomial with non-constant coefficients $$F(t,\varphi)=\textcolor{black}{-}\varphi^3+\textcolor{black}{b}_2(t) \varphi^2+\textcolor{black}{b}_1(t)\varphi+\textcolor{black}{b}_0(t).$$
Here we \textcolor{black}{will consider the equation on a finite time interval $[0,T]$ and} will assume that $\textcolor{black}{b}_0,\textcolor{black}{b}_1,\textcolor{black}{b}_2:[0,T]\to \R$ are differentiable (and therefore continuous). Since we are interested in concentration results around an equilibrium for potentially \textcolor{black}{small or potentially} large times $T$, we will make the dependency in $T$ of all constants explicit.

\begin{remark}\label{mg}
    The degree of $F$ has to be odd for the solution of the associated PDE not to explode, and the equation is no longer subcritical if the degree is greater or equal to five. Since there is no known theory to solve non-subcritical SPDE, our setting is \textcolor{black}{essentially} the most general we can expect. \textcolor{black}{Nonetheless, we can improve a bit our results by replacing the $-1$ in front of $\varphi^3$ by any time-dependent, negative, differentiable coefficient (see Remark \ref{ga3}).}
\end{remark}
A key remark of singular SPDE theory, is that there is no solution to our equation in the classical sense. Indeed, knowing that the space regularity of $\xi$ is $\mathcal{C}^{-\frac{5}{2}-\varepsilon}$ \textcolor{black}{(see Appendix \ref{PL} for a definition of Hölder spaces of negative exponents)}, Schauder's theory gives us that the expected space regularity of the solution to \eqref{mne} is $\mathcal{C}^{-\frac{1}{2}-\varepsilon}$ making all the non-linear terms undefined. This underscores the limits of the standard formalism when describing phenomena associated with stochastic PDEs, and \textcolor{black}{to solve this difficulty, we have to introduce the renormalization procedure. We however prefer to simplify our equation before detailing it, since the computations needed to get our results are sometimes long.} \\

We \textcolor{black}{can easily} get rid of the constant term and take an initial condition equal to $0$. Indeed, for $\phi=\varphi-\overline{\phi}$ where $\overline{\phi}$ is a (deterministic) solution of
\begin{equation}\left\{ \begin{array}{ll}&\partial_t \overline{\phi}=\Delta \overline{\phi}+F(t,\overline{\phi}) \\
&\overline{\phi}(0)=\varphi_0\end{array} \right. \label{eqdet}\end{equation}
we have that
$$\partial_t \phi=\Delta \phi+(F(t,\overline{\phi}+\phi)-F(t,\overline{\phi}))+\sigma\xi,$$
\textcolor{black}{and since} $\phi\in \R\mapsto F(t,\overline{\phi}+\phi)-F(t,\overline{\phi})$ is a polynomial in $\phi$ equal to $0$ in $\phi=0$, we get that
\begin{equation}\label{eqphi}\left\{\begin{array}{ll}&\partial_t \phi=\Delta \phi+\tilde{F}(t,\phi)+\sigma\xi  \\ & \phi(0)=\overline{0} \end{array}\right.\end{equation}
with $\tilde{F}(t,\phi)=-\phi^3+f_2(t)\phi^2+f_1(t)\phi$. We now take $\sigma>0$ and write $a:=f_1$. \textcolor{black}{In the next paragraph, we state an hypothesis on $\overline{\phi}$ that we will assume to hold for the rest of this article. While we could get some concentration results without this hypothesis, they would be far less precise than the ones we get with it (see Remark \ref{hypa}).}

\begin{assumption}
\textcolor{black}{The function $\overline{\phi}$ is a stable solution of \eqref{eqdet}, that is to say that the linearization of \eqref{eqdet} around $\overline{\phi}$ could be rewritten $\partial_t\psi=O(t)\psi$ where the operator $O(t)$ is linear with only negative eigenvalues for all $t\in[0,T]$. In our setting, $\overline{\phi}$ is stable if and only if $a(t)<0$ for all $t\in[0,T]$.}
\end{assumption}

\textcolor{black}{Now that we simplified our equation, we will introduce a key ingredient of renormalization, which is a collection of specific space-time distributions called symbols.} \textcolor{black}{Those symbols are written as limits of smooth functions whose definitions require a regularized white noise $\xi_n$. Formally, for $\rho:\R\times\mathbb{T}^3\to \R$ a compactly supported and smooth function of integral $1$, we denote $\rho_n(t,x)=n^4\rho(n^2t,nx)$ and we take $\xi_n(t,x)=(\rho_n*\xi)(t,x)$. We then have that $\xi_n$ is smooth and that $\xi_n\rightharpoonup \xi$}. All the symbols we will need in our computations can be built from $\RSI_n$ solution of
$$\left\{\begin{array}{ll}& \partial \RSI_n=\Delta \RSI_n +a(t)\RSI_n+\sigma\xi_n \\ & \RSI_n(0)=\overline{0}\end{array}\right.$$
Here $\RSI_n$ is a smooth space-time function, but taking the limit $n\to \infty$ we get \textcolor{black}{that} $\RSI=\lim \RSI_n$ \textcolor{black}{is} a space-time distribution. \textcolor{black}{While the definition we here gave of $\RSI_n$ is implicit, the non-autonomous heat equation does admit an explicit expression. We can therefore write $\RSI_n=I(\xi_n)$ where we have for $(e^{t\Delta})_{t\ge 0}$ the heat semigroup and $\alpha(t,u)=\int_u^t a(s)\mathrm{d}s$} $$\textcolor{black}{I(f)(t)=\int_0^t e^{\alpha(t,u)}e^{(t-u)\Delta}f(u)\mathrm{d}u,}$$ \textcolor{black}{the solution to $(\partial_t-\Delta-a(t))g=f(t)$ with initial condition $g(0)=0$.}\\

We will now construct the other symbols, but in order to do that we \textcolor{black}{first} need to introduce elements of paracontrolled calculus. The main tool of paracontrolled calculus is the decomposition of the standard product into three bilinear operators. More precisely, for two functions $f$ \textcolor{black}{and} $g$ satisfying certain regularity hypotheses we have
$$fg= f\pl g+ f\pe g+f\pg g$$
with $\pl$ and $\pg$ the paraproducts and $\pe$ the resonant product. \textcolor{black}{These three operators} are expressed using the decomposition of functions in Hölder spaces into infinite sums of Paley-Littlewood blocks. \textcolor{black}{While} the technical details are left for the Appendix, we want to underscore that $\pe, \pl$ and $\pg$ are relevant for our study because they interact particularly well with one another, so we can use technical results such as Proposition \ref{com} to give meaning to products that are supposed to be undefined. \\

Now that we have a definition for $\RSI$ and that we introduced the resonant product and the operator $I$, we can build all the symbols we need. They all belong to $\mathcal{T}=\{\RSI,\RSV,\RSY,\RSIW,\RSWV,\RSVW,\RSWW\}$ where we have following \cite{MWX}:
\begin{itemize}
    \item[] $\RSI(t):=\lim_{n\to+\infty} \RSI_n$
    \item[] $\RSV(t):=\lim_{n\to\infty} \RSV_n(t) =\lim_{n\to+\infty}\RSI_n(t)^2-c_n(t)$
    \item[] $\RSY(t):=\lim_{n\to\infty}I(\RSI_n^2-c_n)(t)$
    \item[] $\RSIW(t):=\lim_{n\to+\infty}\RSIW_n=\lim_{n\to+\infty} I(\RSI_n^3-3c_n \RSI)(t)$ 
    \item[] $\RSVW(t):=\lim_{n\to+\infty} \RSIW_n(t)\pe \RSI_n(t)$
    \item[] $\RSWV(t):=\lim_{n\to+\infty} \RSWV_n(t)=\lim_{n\to\infty} I(\RSV_n)(t)\pe \RSV_n(t)-2\tilde{c}_n(t)$
    \item[] $\RSWW(t):=\lim_{n\to\infty} \RSIW_n(t) \pe \RSV_n(t)-6 \tilde{c}_n(t) \RSI_n$
\end{itemize}
with $c_n(t):=\mathbb{E}(\RSI_n(t)^2)$ \textcolor{black}{proportional to} $\sigma^2$ and $\tilde{c}_n(t):=\mathbb{E}(I(\RSV_n)(t)\pe \RSV_n(t))$ \textcolor{black}{proportional to} $\sigma^4$, \textcolor{black}{all other parameters fixed (see \eqref{hom})}. \textcolor{black}{Giving a more explicit expression of $c_n(t)$ and $\tilde{c}_n(t)$ would be tedious, but we can show by adapting \cite{MWX}[p22 \& p27] where it is proven for the autonomous equation, that for $t>0$ we have $c_n(t)\asymp n$ and $\tilde{c}_n(t)\asymp \ln(n)$, all other parameters fixed.}\\

\textcolor{black}{As we see, the definition of most symbols of $\mathcal{T}$ involves diverging in $n$ terms. It is in fact a key example of renormalization procedure: the symbol $\RSI$ being a distribution, we cannot consider its square as a possible definition for $\RSV$, but we observe that $\RSI_n^2$ converges up to a deterministic, diverging in $n$ constant. As we will see later, the renormalization of \eqref{eqphi} will essentially amount to an indirect renormalization of symbols.}
\begin{remark}
    We will sometimes need in our computations $\RSW_n=\RSI_n^3-3c_n$ \textcolor{black}{and its limit $\RSW$}. \textcolor{black}{Even though $\RSW$ is a well defined space-time distribution, we don't put it in} $\mathcal{T}$ because it is very irregular in time and we cannot therefore give meaning to "$\RSW(t)$" for $t\in[0,T]$.
\end{remark}
Finally, we introduce for all these objects their chaos decomposition (CD). The precise definitions are left for the Appendix, but in few words we can write all our symbols as finite sums of space-time distributions called Wiener chaos, that are homogeneous in $\sigma$ \textcolor{black}{and whose moments can be bounded easily}. \textcolor{black}{The list of $k$ such} that $\tau\in \mathcal{T}$ admits a component in the Wiener chaos of order $k$ \textcolor{black}{is called the chaos decomposition of $\tau$, and the largest element of the list} is denoted $n_{\tau}$ and always corresponds to the number of leaves of $\tau$. \\

\textcolor{black}{For a process $(\tau(t))_{t\ge 0}$ we denote $|\tau|$ the greatest $\alpha\in \R$ such that $\tau(t)\in \mathcal{C}^{\beta}$ for all $\beta<\alpha$ and $t\ge0$. Now that all the notations are introduced,} we \textcolor{black}{consider} the following table, with all information inside found in \cite{MWX}: 

\begin{equation*}
\begin{array}{|c||c|c|c|c|c|c|c|}
\hline
     \tau(t) & \RSI(t) & \RSV(t) &  \RSY(t) & \RSIW(t)& \RSVW(t) & \RSWV(t) & \RSWW(t) \\ \hline 
      | \tau | & -1/2 & -1 & 1 & 1/2 & 0 & 0 & -1/2 \\ \hline      \text{CD} & \mathcal{H}_1 & \mathcal{H}_2 & \mathcal{H}_2& \mathcal{H}_3& \mathcal{H}_2 \bigoplus \mathcal{H}_4 & \mathcal{H}_2\bigoplus \mathcal{H}_4 &  \mathcal{H}_1 \bigoplus \mathcal{H}_3 \bigoplus \mathcal{H}_5 \\ \hline
      n_{\tau} & 1 & 2 & 2 & 3 & 4 & 4 & 5 \\ \hline
\end{array}
\end{equation*}

\begin{remark}
    If we consider a space-time setting as in \cite{H2} \textcolor{black}{and we denote $\mathcal{C}_{\mathfrak{s}}^{\alpha}$ the space of functions on $\R\times \mathbb{T}^3$ which are $\alpha$-Hölder in space and $\frac{\alpha}{2}$-Hölder in time,} we can only expect $\RSI$ to be in $\mathcal{C}_{\mathfrak{s}}^{-\frac{1}{2}-\varepsilon}(\mathbb{T}^3\times \R)$. \textcolor{black}{Therefore,} its \textcolor{black}{Hölder regularity in time $t\in \mathbb{R}$} is \textcolor{black}{only} $-\frac{1}{4}-\frac{\varepsilon}{2}$ \textcolor{black}{for all $\varepsilon>0$}, and the notation "$\RSI(t)$" is a priori meaningless. Thanks to the \cite{MWX}[\textcolor{black}{(68)}] \textcolor{black}{combined with \cite{MWX}[(42)]}\textcolor{black}{,} we know that "$\RSI(t)$" \textcolor{black}{exists and is} an element of $\mathcal{C}^{-\frac{1}{2}-\varepsilon}$.
\end{remark}
Now that we have all the definitions needed, we will state our first concentration result on the spatial norm of the symbols of $\mathcal{T}$. \textcolor{black}{If this seems a priori unrelated to our main result, the symbols of $\mathcal{T}$ are ubiquitous in the equations characterizing $\phi$, and therefore a precise control on them imply a precise control on $\phi$.}
\begin{theorem}\label{cs} \textcolor{black}{Let us write $a_{+}=-\sup a$ and $a_{-}=-\inf a$.} 
    For all $\tau\in \mathcal{T}$, $\lambda\in (0,1)$ and $\alpha<|\tau|-\lambda$, there exists $c_{\tau}:=c_{\tau}(\lambda,\textcolor{black}{a_+},\textcolor{black}{a_-})$ \textcolor{black}{and} $d_{\tau}:=d_{\tau}(\lambda,\textcolor{black}{a_+},\textcolor{black}{a_-})>0$ independent of $T$ such that for $k\le n_{\tau}$ and $h>0$
    $$\mathbb{P}\left(\sup_{t\in [0,T]} \|\Pi_k\tau(t)\|_{\mathcal{C}^{\alpha}}>h^{k}\right)\le  d_{\tau}\exp\left(-\frac{c_{\tau}}{T^{\frac{\lambda}{k}}}\frac{h^2}{\sigma^2}\right).$$
\end{theorem}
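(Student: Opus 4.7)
The plan is to combine four ingredients highlighted in the introduction: the Besov characterization of $\mathcal{C}^{\alpha}$ via Proposition~\ref{critere}, Nelson's hypercontractive estimate for Wiener chaos (Proposition~\ref{nelson}), the Garsia--Rodemich--Rumsey (GRR) inequality of~\cite{GRR} to turn the supremum over $t \in [0,T]$ into a double integral, and the explicit second-moment bounds of~\cite{MWX}. Markov's inequality applied to the $p$-th moment, followed by optimization in $p$, then converts these moment bounds into the stated Gaussian tail.

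First I would fix $p \geq 1$ large enough that $\alpha + 3/p < |\tau| - \lambda$. Proposition~\ref{critere} (Besov embedding) then gives
\[
\|\Pi_k\tau(t)\|_{\mathcal{C}^{\alpha}}^{p} \;\lesssim\; \sum_{j \geq -1} 2^{jp(\alpha + 3/p)} \, \|\delta_j \Pi_k\tau(t)\|_{L^{p}(\mathbb{T}^{3})}^{p}.
\]
Since every $\tau \in \mathcal{T}$ is built from integrals starting at $0$, we have $\tau(0) = 0$, so I can move $\sup_{t \in [0,T]}$ inside the $x$-integral and the sum in $j$, and then apply GRR pointwise in $x$ to the scalar map $t \mapsto \delta_j \Pi_k\tau(t,x)$. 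For any Hölder exponent $\gamma > 1/p$, GRR gives
\[
\sup_{t \in [0,T]} |\delta_j \Pi_k\tau(t,x)|^{p} \;\lesssim\; T^{\gamma p - 1} \int_{[0,T]^{2}} \frac{|\delta_j \Pi_k\tau(t,x) - \delta_j \Pi_k\tau(s,x)|^{p}}{|t-s|^{\gamma p + 1}} \, dt \, ds.
\]

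For each fixed $(t,s,x)$ the increment $\delta_j \Pi_k\tau(t,x) - \delta_j \Pi_k\tau(s,x)$ lies in $\mathcal{H}_k$, so Nelson's inequality bounds its $L^{p}(\Omega)$-norm by $(p-1)^{k/2}$ times its $L^{2}(\Omega)$-norm. The latter is controlled via the estimates of~\cite{MWX}, together with the standard spatial--temporal trade-off provided by the heat semigroup, by
\[
\mathbb{E}|\delta_j \Pi_k\tau(t,x) - \delta_j \Pi_k\tau(s,x)|^{2} \;\lesssim\; \sigma^{2k}\, 2^{-2j(|\tau|-\theta)}\, |t-s|^{\theta}
\]
for any small $\theta > 0$. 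Taking expectation, integrating in $x \in \mathbb{T}^{3}$, summing in $j$, and choosing $\gamma < \theta/2$ so the double integral in $(t,s)$ converges while letting $\theta \to \lambda$ (consistent with $\alpha + 3/p + \theta < |\tau|$), one obtains a bound of the form
\[
\mathbb{E}\Bigl[\sup_{t \in [0,T]} \|\Pi_k\tau(t)\|_{\mathcal{C}^{\alpha}}^{p}\Bigr] \;\leq\; (C_{\tau}\, T^{\lambda/2}\, (p-1)^{k/2}\, \sigma^{k})^{p}.
\]
Markov's inequality applied to $\mathbb{P}(\sup_t \|\Pi_k\tau(t)\|_{\mathcal{C}^{\alpha}} > h^k) \le \mathbb{E}[\,\cdots^p\,]/h^{kp}$, followed by optimization at $p \asymp (h/\sigma)^{2}/T^{\lambda/k}$, then produces the stated bound $d_{\tau} \exp(-c_{\tau}\,T^{-\lambda/k}\,h^{2}/\sigma^{2})$.

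\textbf{Main obstacle.} The delicate point is the GRR step: the temporal exponent $\theta$ inherited from~\cite{MWX}, the spatial loss $3/p$ from the Besov embedding, and the Hölder exponent $\gamma$ from GRR must be tuned together so that after summing over $j \geq -1$ and integrating in $(t,s)$ the $T$-dependence in the moment bound is precisely $T^{\lambda/2}$, which is what produces the sharp $T^{-\lambda/k}$ in the final exponent rather than a cruder $T$-power. A secondary technicality is that the chaos decompositions in the table differ from one symbol to the next, so the \cite{MWX} second-moment bounds must be invoked case by case; but the scheme above applies uniformly to each admissible pair $(\tau, k)$.
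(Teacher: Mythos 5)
Your plan is essentially the paper's proof, arranged slightly differently: you combine GRR, Nelson's hypercontractivity, and the Bernstein/Besov embedding exactly as the paper does, and you exploit $\tau(0)=0$ to convert a Hölder-type bound into a $\sup_t$-bound with a $T$-power. The two cosmetic differences are (i) you apply GRR pointwise in $x$ to the scalar map $t\mapsto\delta_j\Pi_k\tau(t,x)$ and then reassemble the Besov norm, whereas the paper applies the Banach-valued version of GRR to $t\mapsto\tau(Tt)\in\mathcal{C}^\beta$ and only then invokes Proposition~\ref{critere}; and (ii) you close via Markov plus optimization in $p\asymp (h/\sigma)^2 T^{-\lambda/k}$, whereas the paper builds an exponential moment $\mathbb{E}\exp\bigl(\ell_\tau[\Pi_k\tau]_{\beta,\gamma}^{2/k}\bigr)<\infty$ and applies Markov once. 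These are equivalent Chernoff-type routes.

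One point you should not gloss over: the constraint $\alpha+3/p<|\tau|-\theta$ forces $\theta$ strictly below $\lambda$ (indeed $\theta<\lambda-3/p$), so the moment bound actually reads $(C\,T^{\theta/2}(p-1)^{k/2}\sigma^k)^p$ with $\theta<\lambda$, not $T^{\lambda/2}$. After optimization this yields $\exp(-c\,T^{-\theta/k}h^2/\sigma^2)$, which dominates the claimed bound for $T\ge 1$ but not for $T<1$. The paper resolves this uniformly by proving a $T$-\emph{independent} Gaussian tail for the Hölder seminorm $[\Pi_k\tau]_{\beta,\lambda/2}$ — it runs the argument twice, once with $\lambda$ and once with a slightly larger $\lambda'\in(\lambda,|\tau|-\beta)$, and uses $[\Pi_k\tau]_{\beta,\lambda/2}\le[\Pi_k\tau]_{\beta,\gamma_1}+[\Pi_k\tau]_{\beta,\gamma_2}$ with $\gamma_1<\lambda/2<\gamma_2$ (Corollary~\ref{chc}) — only afterwards introducing the factor $T^{\lambda/2}$ through $\|\Pi_k\tau(t)\|\le T^{\lambda/2}[\Pi_k\tau]_{\beta,\lambda/2}$. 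You should import this device (or separately dispatch $T<1$, where the $T$-free tail already implies the claim because $T^{-\lambda/k}>1$) to get exactly the exponent $T^{\lambda/k}$ claimed in the statement.
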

\begin{remark}
    \textcolor{black}{For a given $\tau$, we can use Theorem \ref{cs} for all $k\in \llbracket 1, n_{\tau}\rrbracket$ to get $\mathbb{P}\left(\sup_{t\in [0,T]} \|\tau(t)\|_{\mathcal{C}^{\alpha}}>n_{\tau}(h+h^{n_{\tau}})\right)\le  n_{\tau}d_{\tau}\exp\left(-\frac{c_{\tau}}{\max(T^{\lambda},T^{\frac{\lambda}{n_{\tau}}})}\frac{h^2}{\sigma^2}\right)$ (see the proof of Corollary \ref{corp} for details).}
\end{remark}

Now that we have introduced the symbols of $\mathcal{T}$ and established that we have a strong control on their norms, we can go back to our main equation. \textcolor{black}{As we said, $\varphi$ and therefore $\phi$ are expected to be only distributions \textcolor{black}{in Schwarz sense}, making the non-linearities in their equation undefined. To deal with them, we have to write $\phi$ as the sum of a regular part, solution to an explicit equation, and an irregular part, made of symbols. Since we know how to deal with powers of symbols using renormalization, we can eventually give a meaning to all previously undefined terms, and therefore have a notion of solution for \eqref{eqphi}.}  \\

\textcolor{black}{Let us first consider $\tilde{\phi}_n$} the unique smooth solution of a "renormalized equation", \textcolor{black}{analog to \eqref{eqphi} but with a regularized noise and} additional (diverging with $n$) terms. \textcolor{black}{We recall that $c_n(t)=\mathbb{E}(\RSI_n(t)^2)$, and highlighting in red the renormalization terms we write}
\begin{equation}\label{eqphin}\left\{\begin{array}{ll}&\partial_t \tilde{\phi}_n=\Delta \tilde{\phi}_n-(\tilde{\phi}_n^3\textcolor{red}{-3c_n(t)\tilde{\phi}_n})+f_2(t)(\tilde{\phi}_n^2\textcolor{red}{-c_n(t)})+a(t)\tilde{\phi}_n+\sigma\xi_n  \\ & \tilde{\phi}_n(0)=\overline{0} \end{array}\right.\end{equation}
 Then, considering $\tilde{\theta}_n=\tilde{\phi}_n-\RSI_n$, we have
\begin{align*}
    \partial_t \tilde{\theta}_n&=\Delta \tilde{\theta}_n -\left[(\tilde{\theta}_n+\RSI_n)^3-3c_n(\tilde{\theta}_n+\RSI_n)\right]+f_2[(\tilde{\theta}_n+\RSI_n)^2-c_n]+a\tilde{\theta} \\[0,3em]
    &=\Delta \tilde{\theta}_n-\tilde{\theta}_n^3-3\RSI_n \tilde{\theta}_n^2-3(\RSI_n^2-c_n )\tilde{\theta}_n -(\RSI_n^3-3c_n\RSI_n) \\[0,3em] & \ \ \ \ \ \ \ \ +f_2\tilde{\theta}_n^2+2f_2 \tilde{\theta}_n \RSI_n+f_2 (\RSI_n^2-c_n)+a\tilde{\theta}_n \\[0,3em]
    &=\Delta \tilde{\theta}_n-\tilde{\theta}_n^3-3\RSI_n \tilde{\theta}_n^2-3\RSV_n\tilde{\theta}_n -\RSW_n \\[0,3em] & \ \ \ \ \ \ \ \ +f_2\tilde{\theta}_n^2+2f_2 \tilde{\theta}_n \RSI_n+f_2\RSV_n+a\tilde{\theta}_n.
\end{align*}
\textcolor{black}{We here see that the renormalization terms in \eqref{eqphin} can be paired with the powers of $\RSI_n$, that are hence renormalized.} Contrary to \eqref{eqphin}, all the terms in \textcolor{black}{the} equation \textcolor{black}{above} admit a limit when $n$ goes to infinity. \textcolor{black}{Therefore, denoting $\tilde{\phi}=\lim_{n\to+\infty} \tilde{\phi}_n$, could be written $\tilde{\phi}=\RSI+\tilde{\theta}$ where} $\tilde{\theta}:=\lim_{n\to+\infty}\tilde{\theta}_n$ \textcolor{black}{verifies the explicit equation}
$$(\partial_t-\Delta-a(t))\tilde{\theta}=-\tilde{\theta}^3-3\RSI \tilde{\theta}^2-3\RSV \tilde{\theta} -\RSW+f_2(t)\tilde{\theta}^2+2f_2(t) \tilde{\theta} \RSI+f_2(t) \RSV.$$
\textcolor{black}{Since the less regular term on the right-hand side is $\RSW\in\mathcal{C}_{\mathfrak{s}}^{-\frac{3}{2}-\varepsilon}$ for all $\varepsilon>0$, Schauder's theory implies that $\tilde{\theta}$ is of Hölder regularity $\frac{1}{2}-\varepsilon$ in space, and $\tilde{\theta}^2$ and $\tilde{\theta}^3$ are therefore well-defined.} We \textcolor{black}{however} face a new difficulty \textcolor{black}{because $\RSV$} is in $\mathcal{C}_{\mathfrak{s}}^{-1-\varepsilon}$, making the product $\RSV\tilde{\theta}$ undefined.\\

We therefore use a similar method as before by \textcolor{black}{considering a function $\theta_n$ solution to an equation similar to the one of $\tilde{\theta}_n$ but with new renormalization terms involving the second renormalization constant $\tilde{c}_n$}. \textcolor{black}{We then look} at the equation solved by $u_n=\theta_n+\RSIW_n$ before taking the limit (we here use that $(\partial_t-\Delta-a(t))\RSIW=\RSW$ to make the irregular term $\RSW$ disappear).\\

\textcolor{black}{While} $u:=\lim u_n$ \textcolor{black}{does exist, it does not follow a standard equation. Indeed, the diverging in $n$ terms in the equation of $u_n$ cannot be used directly to renormalize symbols and do not therefore disappear.} To solve this last difficulty, \textcolor{black}{we decompose $u$ as a sum of three terms and then use the tools of paracontrolled calculs, following the method outlined in \cite{MW} (see Section \ref{ce})}. \textcolor{black}{Eventually we have the following Lemma where we finally have a well-posed equation with only well-defined terms}:
\begin{lemma} \label{defvw}
    \textcolor{black}{For a $\delta>0$, there exist $v\in \mathcal{C}([0,T],\mathcal{C}^{1-2\varepsilon})\cap \mathcal{C}^{\frac{1}{8}+\delta}([0,T],L^{\infty})$ and $w\in \mathcal{C}([0,T],\mathcal{C}^{\frac{3}{2}-2\varepsilon})\cap \mathcal{C}^{\frac{1}{8}+\delta}([0,T],L^{\infty})$ random functions such that $u=v+w+3I(\RSWW)$ and $(v,w)$ is solution of}
    \begin{equation}
\label{e:eqvw}
\left\{
\begin{array}{lll}
(\partial_t - \Delta-a(t)) v & = & F(v+w)\\
(\partial_t - \Delta-a(t)) w & = & G(v,w)
\end{array}
\right.
\end{equation}
\textcolor{black}{where $F:\R\to \R$ and $G:\R\times \R\to \R$ are two explicit, random functions.}
    
\end{lemma}

\begin{remark}
    The term $+3I(\RSWW)$ in the decomposition of $u$ is not usually met when dealing with $(\Phi_3^4)$ and \textcolor{black}{does not appear} in \cite{MW}. We subtracted it from the $w$ of \cite{MW} because it has a component in the Wiener chaos of order $1$, which creates technical difficulties \textcolor{black}{when computing specific bounds} in some proofs (see Remark \ref{ts}). 
\end{remark}

\textcolor{black}{Now that we only have fully defined terms, we can finally explain what the renormalization of \eqref{eqphi} means in practice.} We added a first renormalization term when we tried to define $\tilde{\theta}$ and a second one we tried to define $u$. \textcolor{black}{To get a fully renormalized equation, we therefore have to sum up these two terms, that is to say consider a function $\phi_n$ solution of}
\begin{equation}\label{eqphir}\left\{\begin{array}{ll}&\partial_t \phi_n=\Delta \phi_n-\phi_n^3+f_2 \phi_n^2+a\phi_n\textcolor{red}{+(3c_n-18\tilde{c}_n)\phi_n-f_2c_n}+\sigma\xi_n  \\ & \phi(0)=\overline{0} \end{array}\right.\end{equation}
where we \textcolor{black}{recall} that $c_n(t)\asymp n$ and $\tilde{c}_n(t)\asymp \ln(n)$ \textcolor{black}{for $t>0$}. \textcolor{black}{We know from the paragraphs above that $\phi=\lim_{n\to \infty} \phi_n$ exists, and it is actually the function we consider when we talk about the solution to \eqref{eqphi}.}\\

\textcolor{black}{Before stating the main result of this article, we give two final definitions.}\\

\textcolor{black}{First for $A>0$ and $f:[0,A]\mapsto \mathcal{C}^{\beta}$, let $[f]_{\beta,\gamma}:=\sup_{0\le s<t\le A} \frac{\|f(t)-f(s)\|_{\mathcal{C}^{\beta}}}{|t-s|^{\gamma}}$ denote the temporal Hölder-constant of $f$. The definition of $[f]_{\beta,\gamma}$ depends of the domain of $f$ considered, and we should formally write $[f]_{\beta,\gamma,A}$. However, since we will mainly consider the case $A=T$, we will not specify the $A$ dependency from now on, and we will instead say it explicitly when we consider another domain.}\\

\textcolor{black}{Second, for $v\in \mathcal{C}([0,A],\mathcal{C}^{1-2\varepsilon})\cap \mathcal{C}^{\frac{1}{8}}([0,A],L^{\infty})$ and $w\in\mathcal{C}([0,A],\mathcal{C}^{\frac{3}{2}-2\varepsilon})\cap \mathcal{C}^{\frac{1}{8}}([0,A],L^{\infty})$ and $t_0\le A$ we denote}
\begin{align*}&\textcolor{black}{\|(v,w)\|_{\Xi,t_0}} \\&\textcolor{black}{:=\max(\sup_{s\in [0,t_0]} \|v(s)\|_{\mathcal{C}^{1-2\varepsilon}},\sup_{s\in[0,t_0]}\|w(s)\|_{\mathcal{C}^{\frac{3}{2}-2\varepsilon}},[v_{|[0,t_0]}]_{0,\frac{1}{8}},[w_{|[0,t_0]}]_{0,\frac{1}{8}})} 
\end{align*}
\textcolor{black}{where $v_{[0,t_{0}]}$ and $w_{|[0,t_0]}$ denote the restriction of $v$ and $w$ to $[0,t_0]$. We prove easily that $\|(v,w)\|_{\Xi,A}$ is a complete norm.}\\

We can \textcolor{black}{now} state the main result of this article.
\begin{theorem} \label{thp}
    \textcolor{black}{Let us denote $m=\|f_2\|_{\infty}+\|f_2'\|_{\infty}$.} For $\varepsilon>0$ and $\lambda\in(0,\frac{\varepsilon}{3}\wedge 1)$, there exists $h_0\textcolor{black}{:=h_0(\lambda,a_+,a_{-},m)\in (0,1)}$, and $C:=C(\lambda,\textcolor{black}{a_+,a_{-}}),D:=D(\lambda,\textcolor{black}{a_+,a_{-}})>0$ independent of $T$ such that for all $h\in (0,h_0)$ we have
    $$\mathbb{P}\left( \textcolor{black}{\|(v,w)\|_{\Xi,T}}> h\right)\le D \exp\left(-\frac{C}{\textcolor{black}{\max(T^{\lambda},T^{\frac{\lambda}{5}})}} \frac{h^2}{\sigma^2}\right).$$
\end{theorem} 
Here we recall that for $\varphi$ solution of the renormalized version of \eqref{mne}, we have $\varphi=\overline{\phi}+\RSI-\RSIW+3I(\RSWW)+(v+w)$.\\

\textcolor{black}{Combining Theorem \ref{cs} for the symbols and Theorem \ref{thp} for $(v,w)$ we have the following key corollary which is the result stated in the abstract:}
\begin{corollary}\label{corp}
    \textcolor{black}{For $\varepsilon>0$ and $\lambda\in(0,\frac{\varepsilon}{3}\wedge 1)$, there exists an $h_0':=h_0'(\lambda,a_+,a_{-},m)\in(0,1)$, and $C':=C'(\lambda,a_+,a_{-}),D':=D'(\lambda,a_+,a_{-})>0$ independent of $T$ such that for all $h\in (0,h_0)$ we have} 
    $$\textcolor{black}{\mathbb{P}\left(\sup_{t\in [0,T]}\|(\varphi-\overline{\phi})(t)\|_{\mathcal{C}^{-\frac{1}{2}-\varepsilon}}> h\right)\le D' \exp\left(-\frac{C'}{\max(T^{\lambda},T^{\frac{\lambda}{5}})} \frac{h^2}{\sigma^2}\right).}$$
\end{corollary}

\begin{remark}\label{ga3}
    \textcolor{black}{Let us go back to \eqref{mne} and assume that $F(t,\varphi)=a_3(t)\varphi^3+a_2(t) \varphi^2+a_1(t)\varphi+a_0(t)$ where $a_0,a_1,a_2,a_3:[0,T]\to \R$ are differentiable and $a_3(t)<0$ for all $t\in[0,T]$ (and is therefore bounded away from $0$). As we said in Remark \ref{mg}, all the results of this article hold for such a general $F$, and we will now explain why.} \textcolor{black}{Writing} $\varphi=b(t)\psi$, we get
\begin{align*}
    b\partial_t \psi&=b\Delta \psi+a_3b^3\psi^3+a_2b^2\psi^2+(a_1b-b')\psi+a_0+\sigma\xi
\end{align*}
so, taking $b=\frac{1}{\sqrt{-a_3}}$ which is differentiable, and dividing the equation by $b(t)$ we get \textcolor{black}{essentially a rewriting of \eqref{mne}}
$$\left\{\begin{array}{ll}
     &\partial_t \psi=\Delta \psi-\psi^3+b_2(t)\psi^2+b_1(t)\psi+b_0(t)+\sigma\xi^b  \\
     & \psi(0)=\frac{1}{b(0)}\varphi_0
\end{array}\right.$$
where $b_2(t)=a_2(t)b(t)$, $b_1(t)=\frac{a_1(t)b(t)-b'(t)}{b(t)}$, $b_0(t)=\frac{a_0(t)}{b(t)}$ and $\xi^b=\frac{1}{b} \xi$. This new gaussian white noise is not homogeneous in time, but since $b$ is positive, bounded away from $0$ and continuous on $[0,T]$, $\xi^b$ has essentially the same properties as $\xi$. For instance, for $\varphi_1,\varphi_2\in L^2(\R\times \mathbb{T}^3)$ of support included in $[0,T]\times \mathbb{T}^3$, we have
$$\mathbb{E}((\varphi_1,\xi^b)(\varphi_2,\xi^b))=\int_{[0,T]\times\mathbb{\mathbb{T}^3}} \frac{1}{b(t)^2}\varphi_1(t,x)\varphi_2(t,x)\mathrm{d}t\mathrm{d}x.$$
\textcolor{black}{Since our concentration results hold for $\psi$, they hold for $\varphi$.} 
\end{remark}
\begin{remark}\label{hypa}
    \textcolor{black}{All our theorems would still hold if we had not assumed $a<0$, but the constants in them would not be of the form $T^{-\lambda}$ anymore}. Indeed, if we assume that $a$ may take positive values, we have to bound terms like $\exp(\int_s^t a(u)\mathrm{d}u)$ by $\exp(T\times \sup a)$, and therefore all \textcolor{black}{the} constants appearing in our concentration results will exponentially depend of $T$, making them far less relevant for large times $T$.
\end{remark}

\subsection{An application : \textcolor{black}{a} non-autonomous $(\Phi_3^4)$ model \label{ex}}
We consider the equation
\begin{equation}\left\{\begin{array}{ll}&\partial_t\varphi=\Delta \varphi-\varphi^3+\gamma(t)\varphi+\sigma\xi \\ &\varphi(0)=\varphi_0 \label{phi43} \end{array} \right.\end{equation}
which is \eqref{mne} with $F(t,\varphi)=-\varphi^3+\gamma(t)\varphi$. \textcolor{black}{We} can decompose a solution $\varphi$ of \eqref{phi43} as
$$\varphi=\overline{\phi}+\phi$$
where $\overline{\phi}$ is the solution of the deterministic equation
\begin{equation}\left\{\begin{array}{ll}&\partial_t\overline{\phi}=\Delta \overline{\phi}-\overline{\phi}^3+\gamma(t)\overline{\phi} \\
&\overline{\phi}(0)=\varphi_0\label{phisb} \end{array} \right.\end{equation}
and $\phi$ is the solution of
$$\left\{ \begin{array}{ll} &\partial_t \phi=\Delta \phi+\tilde{F}(t,\phi)+\sigma\xi \\ 
&\phi(0)=0\end{array}\right.$$
with $\tilde{F}(t,\phi)=-\phi^3+f_2(t)\phi^2+f_1(t)\phi$.
We look for \textcolor{black}{a} stable solution \textcolor{black}{of} \eqref{phisb} \textcolor{black}{that is constant in space, that is to say an application} $\overline{\phi}:[0,T]\to \R$ such that \begin{equation} \label{ece}
    \left\{ \begin{array}{ll}
         \partial_t \overline{\phi}(t) = -\overline{\phi}(t)^3+\gamma(t)\overline{\phi}(t)  \\
        \gamma(t)-3\overline{\phi}(t)^2<0   
    \end{array} \right. \ \ \ \   \text{for all $t\in[0,T]$} 
\end{equation} \textcolor{black}{As was said in the Assumption, the inequality in \eqref{ece} is equivalent to} $a:=f_1<0$. There are two noteworthy particular cases:

\begin{itemize}
    \item[$\bullet$] If $\gamma(t)<0$ for all $t\in[0,T]$, the condition $\gamma(t)-3\overline{\phi}(t)^2<0$ is always verified. Therefore, for any constant in space initial condition, we have that $\overline{\phi}$ is a stable solution.
    \item[$\bullet$] If $\gamma(t)>0$ for all $t\in[0,T]$ and $\gamma$ is decreasing, we can check that $\phi_+(t)=\sqrt{\gamma(t)/3}$ is a strict subsolution to \eqref{phisb} and $\phi_{-}(t)=-\sqrt{\gamma(t)/3}$ is a strict supersolution. Therefore, if $\overline{\phi}(0)>\phi_+(0)$ (respectively $\overline{\phi}(0)<\phi_{-}(0)$), we have that $\overline{\phi}(t)>\phi_{+}(t)$ for all $t\in [0,T]$ (respectively $\overline{\phi}(t)<\phi_{-}(t)$ for all $t\in [0,T]$), which means that $\gamma(t)-3\overline{\phi}(t)^2<0$. Eventually, for any constant in space initial condition taken outside of $[-\sqrt{\gamma(0)/3},\sqrt{\gamma(0)/3}]$, we have that $\overline{\phi}$ is a stable solution.
\end{itemize}
\textcolor{black}{We can therefore invoke Corollary \ref{corp}, which tells us that if $\sigma\le h_0$, we have that $\sup_{t\in[0,T]}\|(\varphi-\overline{\phi})(t)\|_{\mathcal{C}^{-\frac{1}{2}-\varepsilon}}$ is at most of order $\sigma$ with a high probability.}

\section{Proof of the main theorems}

\textcolor{black}{In order to prove the main concentration results, we will combine several technical tools found in our different references. Here are some insights on the mathematical ideas behind the proof.}
\begin{itemize}
    \item \textcolor{black}{First}, a key idea behind the resolution of $(\Phi_3^4)$ using paracontrolled distributions is to to regularize previously undefined products by trading-off temporal regularity for spatial regularity in some equations. Therefore, random temporal Hölder constants appear naturally in our computations, and to get concentration results \textcolor{black}{on the} solutions of $(\Phi_3^4)$ we first need concentration results on \textcolor{black}{the} random temporal Hölder constants \textcolor{black}{of symbols}. Computing the moments of a Hölder constant is however a non-trivial task, and we cannot directly use the results found of \cite{MWX} to do this. To overcome this difficulty, we invoke the Garsia-Rodemich-Rumsey inequality proven in \cite{GRR} to bound the supremum in the definition of the Hölder constant with an integral involving terms we can control. Besides, concentration results on $\sup_{t\in[0,T]}\|\tau(t)\|$ for $\tau$ a symbol are directly implied by concentration results on the temporal Hölder constant of $\tau$ (since we know the initial condition), making the last ones even more useful in our context.
    \item  \textcolor{black}{Second}, we underscore that the theory of paracontrolled distributions is formulated in the framework of Besov spaces, and that we will identify the Hölder space $\mathcal{C}^\alpha$ with the Besov space $\mathcal{B}_{\infty,\infty}^{\alpha}$ (see Appendix). While this definition \textcolor{black}{of} Hölder spaces seems complicated at first glance\textcolor{black}{,} since we decompose a function $f$ into an infinite sum of "blocks" $\delta_k f$, it is particularly relevant when $f$ is a random function we want to get concentration results on. Indeed, to control the tail of a random variable one can control its moments, and Proposition \ref{critere} proves that we can bound the $p$-th moment of $\|f\|_{\mathcal{C}^{\alpha}}$ with bounds on the $p$-th moments of the $\|\delta_k f\|_{L^p}$. The blocks being always smooth while $f$ is often only a distribution, computations are simpler with blocks than with $f$. 
    \item \textcolor{black}{Third}, all the symbols that we use in this article are be expressed with stochastic integrals only involving independent copies of the stochastic white noise $\xi$. This allows us to use the theory of Wiener chaos and especially Nelson's inequality (Proposition \ref{nelson}) to control the $p$-th moment of key random variables with bounds on their second moments. These kinds of bounds are precisely the ones proven in \cite{MWX}.  
    \item \textcolor{black}{Fourth, once Theorem \ref{cs} is proven, we} combine technical Lemmas found in the Appendix of \cite{MW} with the method developed in \cite{BN2} to transform \textcolor{black}{this} concentration result on symbols into concentration results on the solution of an equation involving these symbols. \textcolor{black}{This part is longer and more technical than the previous ones, but it is not conceptually complex.}
\end{itemize}

\subsection{A note on bounds from \cite{MWX} \label{borneMWX}}
\textcolor{black}{For $\tau \in \mathcal{T}$ and $t\ge 0$, let us denote $(\hat{\tau}(t,\omega))_{\omega\in \mathbb{Z}^d}$ the Fourier coefficients of $\tau(t)$. In \cite{MWX}, the authors prove for all $\tau\in \mathcal{T}$ the inequalities \cite{MWX}[(42)]}
$$\mathbb{E}(|\hat{\tau}(t,\omega)|^2)\le C(1+|\omega|)^{-d-2|\tau|}$$
\textcolor{black}{and \cite{MWX}[(44)]}
$$\mathbb{E}(|\hat{\tau}(t,\omega)-\hat{\tau}(s,\omega)|^2)\le C|t-s|^{\lambda}(1+|\omega|)^{-d-2|\tau|+2\lambda}$$
\textcolor{black}{for all $\lambda\in(0,1)$, $\omega\in \mathbb{Z}^d$ and $0<|t-s|<1$.} \\

\textcolor{black}{At some point in the proof of Theorem \ref{cs}, we will need these inequalities, but we should highlight that we cannot use them directly since we are not exactly in the same setting as in \cite{MWX}.} Indeed, in \cite{MWX} the authors consider formally for $P_{t-s}=e^{-(t-s)}e^{(t-s)\Delta}$
$$\tilde{\RSI}(t)=\int_{-\infty}^t P_{t-s}(\xi(s))\mathrm{d}s,$$
that is an "ancient solution" to the homogeneous in time stochastic heat equation. Here, we consider for $P_{\textcolor{black}{s,t}}=e^{\int_s^t a(u)\mathrm{d}u}e^{(t-s)\Delta}$
$$\RSI(t)=\int_0^t P_{\textcolor{black}{s,t}}(\xi(s))\mathrm{d}s,$$
that is a solution to the non-autonomous heat equation \textcolor{black}{with trivial initiation condition}. We can however prove that bounds of the type \cite[(42)\&(44)]{MWX} are still valid in our setting, and that constants in them are independent of $T$. It would be tedious to prove this for all seven symbols, so we will prove it for $\RSI$. The reader can convince themself that those bounds work for all other symbols.\\

\textcolor{black}{We will now introduce two key notations. First, we denote $\hat{P}_{s,t}$ the Fourier transform of the heat kernel, that is for $\omega\in \Z^3$}
$$\textcolor{black}{\hat{P}_{s,t}(\omega)=e^{\int_s^t a(u)\mathrm{d}u}e^{-(t-s)4\pi^2|\omega|^2}.}$$\textcolor{black}{Second, for $e_{\omega}(x)=e^{2i\pi \omega\cdot x}$, we denote $(W(\omega,t))_{t\ge 0}$ the complex Brownian motion $(\xi(\mathbb{1}_{[0,t]}e_{\omega}))_{t\ge 0}$.} \textcolor{black}{Using these notations, we can write} for $t\in [0,T]$ and $\omega\in \Z^3$
\begin{align*}\hat{\RSI}(t,\omega)&=\int_0^t \hat{P}_{s,t}\mathrm{d}W(s,\omega) \\
&=\int_0^t e^{\alpha(t,s)}e^{-(t-s)4\pi^2|\omega|^2}\mathrm{d}W(s,\omega)
\end{align*}
Then, writing $a_{+}=-\sup_{[0,T]} a>0$, we have that $a+a_{+}\le 0$ \textcolor{black}{on $[0,T]$} and therefore
\begin{align*}\mathbb{E}[|\hat{\RSI}(t,\omega)|^2 ]&=\int_0^t \hat{P}_{\textcolor{black}{s,t}}\textcolor{black}{(\omega)}^2\mathrm{d}s \\
&=\int_0^t e^{2 \int_s^t [a(u)+a_{+}]\mathrm{d}u}e^{-2(t-s)(a_{+}+4\pi^2|\omega|^2)}\mathrm{d}s \\
&\le \int_0^t e^{-2(t-s)(a_{+}+4\pi^2|\omega|^2)} \mathrm{ds} \\
&\le \frac{1}{2(a_{+}+4\pi^2|\omega|^2)}
\end{align*}
This is exactly the estimate \cite[(42)]{MWX} \textcolor{black}{and we underscore that it is} independent of $T$. If we now look at temporal differences\textcolor{black}{, recalling that we denote $\alpha(t,s)=\int_{s}^t a(u)\mathrm{d}u$,} we have for $0\le s\le t\le T$
\begin{align*}
    &\mathbb{E}[|\hat{\RSI}(t,\omega)-\hat{\RSI}(s,\omega)|^2] \\ &\le\int_0^s |\hat{P}_{u,t}\textcolor{black}{(\omega)}-\hat{P}_{u,s}\textcolor{black}{(\omega)}|^2 \mathrm{d}u\textcolor{black}{+\int_s^t |\hat{P}_{u,t}(\omega)|^2\mathrm{d}u} \\
    &\le \int_0^s |e^{\alpha(t,u)}e^{-(t-u)4\pi^2|\omega|^2}-e^{\alpha(s,u)}e^{-(s-u)4\pi^2|\omega|^2}|^2 \mathrm{d}u \\
    & \ \ \ \ \ \ \ \textcolor{black}{+\int_s^t e^{2\alpha(t,u)}e^{-2(t-u)4\pi^2|\omega|^2}\mathrm{d}u} \\
    &\le 2\int_0^s |e^{\alpha(s,u)}e^{-(t-u)4\pi^2|\omega|^2}-e^{\alpha(s,u)}e^{-(s-u)4\pi^2|\omega|^2}|^2 \mathrm{d}u \\
    & \ \ \ \ \ \ \ +2\int_0^s |(e^{\alpha(t,s)}-1)e^{\alpha(s,u)}e^{-(t-u)4\pi^2|\omega|^2}|^2 \mathrm{d}u \\
    & \ \ \ \ \ \ \ \ \ \ \ \ \ \ +\int_s^t e^{-2a_{+}(t-u)}e^{-2(t-u)4\pi^2|\omega|^2}\mathrm{d}u\\
    &\le 2\int_0^s \textcolor{black}{e^{-2(s-u)a_{+}}} |e^{-(t-u)4\pi^2|\omega|^2}-e^{-(s-u)4\pi^2|\omega|^2}|^2 \mathrm{d}u \\
    & \ \ \ \ \ \ \ +2\int_0^s |(e^{\alpha(t,s)}-1)e^{-(s-u)a_{+}}e^{-(t-u)4\pi^2|\omega|^2}|^2  \mathrm{d}u\\
    & \ \ \ \ \ \ \ \ \ \ \ \ \ \ +\int_s^t e^{-2(t-u)(a_{+}+4\pi^2|\omega|^2)}\mathrm{d}u \\
    &\le 2|e^{-(t-s)4\pi^2|\omega|^2}-1|^2\int_0^s e^{-2(s-u)(a_{+}+4\pi^2|\omega|^2)} \mathrm{d}u\\
     & \ \ \ \ \ \ \ +2|e^{\alpha(t,s)}-1|^2\int_0^s e^{-2(t-s)4\pi^2|\omega|^2}e^{-2(s-u)(a_{+}+4\pi^2|\omega|^2)}\mathrm{d}u \\
     & \ \ \ \ \ \ \ \ \ \ \ \ \ \ +\frac{1}{2(a_{+}+4\pi^2|\omega|^2)}[1-e^{-2(t-s)(a_{+}+4\pi^2|\omega|^2)}]
\end{align*}
We then use the standard inequalities 
\begin{equation} \label{inc}|1-e^{-x}|\le 1\wedge x \textcolor{black}{\le 1\wedge x^{\lambda}}\le 1\wedge x^{\frac{\lambda}{2}}\end{equation} for $x\ge 0$ and $\lambda\in (0,1)$. Denoting $a_{-}=-\inf a$ we have
\begin{align*}
   &\mathbb{E}[|\hat{\RSI}(t,\omega)-\hat{\RSI}(s,\omega)|^2] \\ &\le 2(1\wedge (t-s)^{\lambda}(4\pi^2|\omega|^2)^{\lambda})\int_0^s e^{-2(s-u)(a_{+}+4\pi^2|\omega|^2)}\mathrm{d}u\\
     & \ \ \ \ \ \ \ +2(1\wedge (a_{-})^{\lambda}(t-s)^{\lambda})\int_0^s e^{-2(s-u)(a_{+}+4\pi^2|\omega|^2)}\mathrm{d}u \\
     &  \ \ \ \ \ \ \ \ \ \ \ \ \ \ +(1\wedge 2^{\lambda}(t-s)^{\lambda}(a_{+}+4\pi^2|\omega|^2)^{\lambda})\frac{1}{2(a_{+}+4\pi^2|\omega|^2)} \\
     &\le 2(t-s)^{\lambda}((4\pi^2|\omega|^2)^{\lambda}+(a_{-})^{\lambda}+(a_{+}+4\pi^2|\omega|^2)^{\lambda})\frac{1}{2(a_{+}+4\pi^2|\omega|^2)} \\
     &\le C_1(t-s)^{\lambda}\langle \omega \rangle^{-2+2\lambda}
\end{align*}
where $\langle \omega \rangle=\sqrt{1+|\omega|^2}$ and $C_1:=C(a_{+},a_{-})$ is independent of $\lambda$ since we can bound terms like $(a_{-})^{\lambda}$ by $(1+a_{-})$. This is exactly the estimate \cite[(44)]{MWX} for $\RSI$. In fact, it is better than \cite[(44)]{MWX}, since in \cite{MWX} they ask for $|t-s|$ to be less than $1$, while we have an inequality that does not depend of $T$ and is uniform in $0\le s\le t\le T$.

\subsection{Proof of Theorem \ref{cs}}

\textcolor{black}{We first state a technical Lemma.}

\begin{proposition}\label{ch}
    Let $\tau\in \mathcal{T}$, and $\beta<|\tau|-\lambda$ for $\lambda\in (0,1)$. \textcolor{black}{Denoting $\gamma_0:=(\frac{\lambda}{2}-|\frac{\ln(2)}{\ln(T)}|)\vee\frac{\lambda}{4}\in[\frac{\lambda}{4},\frac{\lambda}{2})$, for all $\gamma\in (\gamma_0,\frac{\lambda}{2})$} there exist $\ell_{\tau}:=\ell_{\tau}(\lambda,\textcolor{black}{\beta,a_{+},a_{-}})$ and $m_{\tau}:=m_{\tau}(\lambda,\textcolor{black}{\beta,\gamma,a_{+},a_{-}})$ independent of $T$, \textcolor{black}{such that for all $k\le n_{\tau}$ we have} $$\mathbb{P}\left([\Pi_k\tau]_{\beta,\gamma}>h^{k}\right)\le m_{\tau} \exp\left(-\ell_{\tau}\frac{h^2}{\sigma^2}\right).$$
\end{proposition}
The proof relies heavily on the Garsia-Rodemich-Rumsey Lemma (see \cite[Lemma 1.1]{GRR}).
\begin{lemma}\label{GRR}
    Let $(\mathcal{B},|\cdot|)$ be a Banach space, $f:[0,1]\mapsto \mathcal{B}$ a continuous function, $\Psi:\R_+\to\R_+$ a strictly increasing function with $\Psi(+\infty)=+\infty$ and $p:[0,1]\to[0,1]$ strictly increasing with $p(0)=0$, such that
    $$\textcolor{black}{B:=}\int_0^1 \int_0^1 \Psi\left[\frac{|f(x)-f(y)|}{p(|x-y|)}\right] \mathrm{d}x\mathrm{d}y<+\infty.$$
    Then, for all $s,t\in [0,1]$, we have
    $$|f(t)-f(s)|\le 8 \int_0^{|t-s|} \Psi^{-1}\left(\frac{4B}{u^2}\right)\mathrm{d}p(u)$$
\end{lemma}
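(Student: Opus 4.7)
The plan is to prove this classical inequality via the dyadic selection argument of Garsia, Rodemich and Rumsey. First I would define the auxiliary quantity
$$I(x) := \int_0^1 \Psi\!\left(\frac{|f(x)-f(y)|}{p(|x-y|)}\right) \mathrm{d}y,$$
so that Fubini yields $\int_0^1 I(x)\,\mathrm{d}x = B$. The core tool is a two-level Markov selection: for any $x_0 \in [0,1]$ and any subinterval $J \subset [0,1]$ of length $\delta$, the set of $y \in J$ with $I(y) \le 4B/\delta$ has Lebesgue measure at least $3\delta/4$, and so does the set of $y \in J$ with $\Psi(|f(x_0)-f(y)|/p(|x_0-y|)) \le 4 I(x_0)/\delta$. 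Their intersection is therefore nonempty, and produces a point $x_1 \in J$ satisfying both constraints simultaneously.

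Next I would introduce the dyadic scales $d_n$ characterized by $p(d_n) = p(|t-s|)/2^n$, using strict monotonicity of $p$ (or, if $p$ has jumps, the infima $d_n := \inf\{u : p(u) \ge p(|t-s|)/2^n\}$ together with a monotone limiting argument). Starting from a point $t_0 \in [s,t]$ for which $I(t_0) \le 2B/|t-s|$, I would inductively build $t_{n+1}$ by applying the selection principle above inside a subinterval of $[s,t]$ of length $d_n$ adjacent to $t_n$. This yields the per-step estimate
$$|f(t_n) - f(t_{n+1})| \le p(d_n)\,\Psi^{-1}\!\left(\frac{4 I(t_n)}{d_n}\right) \le p(d_n)\,\Psi^{-1}\!\left(\frac{16 B}{d_{n-1} d_n}\right).$$
Performing the symmetric construction starting near $t$ and one ending near $s$, and using continuity of $f$ to justify the telescoping $f(t) - f(s) = \sum_n [f(t_n) - f(t_{n+1})]$ (the sequences converge to $s$ and $t$ respectively since $d_n \to 0$), gives two convergent series bounding $|f(t) - f(s)|$.

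Finally I would convert the series to the stated integral. Using the relation $p(d_{n-1}) - p(d_n) = p(d_n)$ and monotonicity of $\Psi^{-1}$, each summand $p(d_n)\,\Psi^{-1}(16 B/(d_{n-1} d_n))$ is bounded by the Stieltjes increment of $\Psi^{-1}(4B/u^2)$ against $\mathrm{d}p(u)$ on the subinterval $[d_n, d_{n-1}]$ (up to benign constants absorbed into the prefactor). Telescoping and combining both halves of the construction then produces the factor $8$ in front of $\int_0^{|t-s|} \Psi^{-1}(4B/u^2)\,\mathrm{d}p(u)$.

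The main technical obstacle is the precise bookkeeping required to reach the exact constant $8$ rather than a larger multiple, together with the treatment of possibly discontinuous $p$ and the verification that the inductively constructed sequence $(t_n)$ actually converges to the intended endpoint; both difficulties are resolved by the customary inf-based definition of $d_n$, by choosing the initial point at the correct end of $[s,t]$, and by controlling tail terms using the integrability of $\Psi^{-1}(4B/u^2)\,\mathrm{d}p(u)$ near $u = 0$ (which is implicit in the finiteness of the right-hand side).
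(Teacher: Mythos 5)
The paper does not actually prove this lemma: it is quoted verbatim from Garsia--Rodemich--Rumsey \cite[Lemma 1.1]{GRR}, so your sketch has to stand on its own against the classical chaining argument, whose skeleton (Chebyshev selection, recursive choice of points, telescoping plus continuity) you have correctly identified. Two of your specific choices, however, would break the proof as written. First, the anchoring of the selection intervals: you pick $t_{n+1}$ in a subinterval of length $d_n$ \emph{adjacent to $t_n$}. Then $t_{n+1}\ge t_n-d_n$, so the sequence converges to some limit $\ge t_0-\sum_n d_n$, and since the $d_n$ are defined through $p$ their sum can be much smaller than $t_0-s$; the claim that ``the sequences converge to $s$ and $t$ respectively since $d_n\to 0$'' does not follow, and the telescoping sum would then control $|f(t_0)-f(\lim t_n)|$ rather than $|f(t_0)-f(s)|$. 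The remedy (as in the original proof) is to anchor the selection interval at the endpoint, e.g.\ choose $t_{n+1}\in[s,s+d_{n+1}]$, which forces $t_n\to s$ while still giving $|t_{n+1}-t_n|\le d_n$.

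Second, the constants. Taking both Markov thresholds of the form $4(\cdot)/\delta$ yields $\Psi^{-1}\bigl(16B/(d_{n-1}d_n)\bigr)$, and the factor $16$ is \emph{not} benign: $\Psi$ is only assumed increasing, so there is no inequality $\Psi^{-1}(16B/x)\le C\,\Psi^{-1}(4B/x)$ to absorb it into the prefactor, while the statement requires the specific integrand $\Psi^{-1}(4B/u^2)$. Moreover the comparison you propose is made on the wrong interval: for $u\in[d_n,d_{n-1}]$ one has $u^2$ up to $d_{n-1}^2\gg d_{n-1}d_n$, so $\Psi^{-1}\bigl(cB/(d_{n-1}d_n)\bigr)\le\Psi^{-1}(4B/u^2)$ fails near $u=d_{n-1}$ even with $c=4$. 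The correct bookkeeping uses thresholds $2(\cdot)/\delta$ (each exceptional set then has measure strictly less than $\delta/2$ --- integrate $I$ over the bad set to get strictness --- so the good set is still nonempty), which gives $\Psi^{-1}\bigl(4B/(d_{n-1}d_n)\bigr)$, and compares it with $\Psi^{-1}(4B/u^2)$ one dyadic level lower, on $u\in[d_{n+1},d_n]$ where $u^2\le d_n^2\le d_{n-1}d_n$; combined with $p(d_{n-1})=4\bigl(p(d_n)-p(d_{n+1})\bigr)$ for your dyadic-in-$p$ scales, each step is bounded by $4\int_{d_{n+1}}^{d_n}\Psi^{-1}(4B/u^2)\,\mathrm{d}p(u)$, which after summing gives $4\int_0^{|t-s|}$ per chain and the factor $8$ upon joining the two chains at $t_0$ (the original paper instead proves the case $s=0$, $t=1$ and rescales). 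With these two corrections your outline reduces to the standard Garsia--Rodemich--Rumsey proof.
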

\begin{proof} (Proposition \ref{ch}) \\\
\textcolor{black}{Fix $\lambda\in(0,1)$ and $\gamma\in(\gamma_0,\frac{\lambda}{2})$. Let $\gamma'=\frac{1}{2}(\gamma+\frac{\lambda}{2})$, and let us take} $\Psi(u)=|u|^p$ and $p(u)=|u|^{\gamma'+\frac{1}{p}}$ in Lemma \ref{GRR} \textcolor{black}{for} $(\mathcal{B},|\cdot|)$ a general Banach space. Using the notations of the Lemma, we then get that \textcolor{black}{for any $f:[0,1]\to \mathcal{B}$ continuous}
\begin{align*}
    |f(t)-f(s)|&\le 8 \int_0^{|t-s|} \frac{(4B)^{\frac{1}{p}}}{u^\frac{2}{p}}(\gamma'+\frac{1}{p})u^{\gamma'-1+\frac{1}{p}}\mathrm{d}u \\
    &\le 8\cdot 4^{\frac{1}{p}}(\gamma'+\frac{1}{p}) B^{\frac{1}{p}}\int_0^{|t-s|} u^{\gamma'-\frac{1}{p}-1}\mathrm{d}u  \\
    &\le8\cdot 4^{\frac{1}{p}}\frac{\gamma'+\frac{1}{p}}{\gamma'-\frac{1}{p}}|t-s|^{\gamma'-\frac{1}{p}}\left(\int_0^1 \int_0^1 \frac{|f(x)-f(y)|^p}{|x-y|^{\gamma' p+1}}\mathrm{d}x\mathrm{d}y\right)^{\frac{1}{p}}
\end{align*}
The only term involving $t,s$ on the right-hand side is $|t-s|^{\gamma'-\frac{1}{p}}$ so we have
$$\sup_{0\le s<t\le 1}\frac{|f(t)-f(s)|^p}{|t-s|^{\gamma' p -1}}\le \left(8\cdot 4^{\frac{1}{p}} \frac{\gamma'+\frac{1}{p}}{\gamma'-\frac{1}{p}}\right)^p \int_0^1 \int_0^1 \frac{|f(x)-f(y)|^p}{|x-y|^{\gamma' p+1}}\mathrm{d}x\mathrm{d}y.$$
This inequality is noteworthy, because it says that a control on the integral of a quantity implies a control on a closely related supremum. Let us now apply it to the case $f(t)=\tau(T\cdot t)$ where we take $(\mathcal{B,|\cdot|})=(\mathcal{C}^{\beta},\|\cdot\|_{\mathcal{C}^{\beta}})$. We have
\begin{align*}&\sup_{0\le s<t\le 1}\frac{\|\tau(t\cdot T)-\tau(s\cdot T)\|_{\mathcal{C}^{\beta}}^p}{|t-s|^{\gamma' p -1}} \\ &\le \left(8\cdot 4^{\frac{1}{p}} \frac{\gamma'+\frac{1}{p}}{\gamma'-\frac{1}{p}}\right)^p (\int_0^1 \int_0^1 \frac{\|\tau(T\cdot x)-\tau(T\cdot y)\|_{\mathcal{C}^{\beta}}^p}{|x-y|^{\gamma' p+1}}\mathrm{d}x\mathrm{d}y)\end{align*}
therefore, doing a change of variable we get
\begin{align*}&\sup_{0\le s<t\le T}\frac{\|\tau(t)-\tau(s)\|_{\mathcal{C}^{\beta}}^p}{|t-s|^{\gamma'p-1}} T^{\gamma' p-1} \\ &\le \left(8\cdot 4^{\frac{1}{p}} \frac{\gamma'+\frac{1}{p}}{\gamma'-\frac{1}{p}}\right)^p (\int_0^T \int_0^T \frac{\|\tau(x)-\tau(y)\|_{\mathcal{C}^{\beta}}^p}{|x-y|^{\gamma' p+1}}\mathrm{d}x\mathrm{d}y)\frac{1}{T^2} T^{\gamma'p+1}\end{align*}
and we can simplify the terms in $T$
\begin{equation}\sup_{0\le s<t\le T}\frac{\|\tau(t)-\tau(s)\|_{\mathcal{C}^{\beta}}^p}{|t-s|^{\gamma'p-1}} \le \left(8\cdot 4^{\frac{1}{p}} \frac{\gamma'+\frac{1}{p}}{\gamma'-\frac{1}{p}}\right)^p \int_0^T \int_0^T \frac{\|\tau(x)-\tau(y)\|_{\mathcal{C}^{\beta}}^p}{|x-y|^{\gamma' p+1}}\mathrm{d}x\mathrm{d}y\label{sup}.\end{equation}
\textcolor{black}{By replacing} $\gamma' p-1$ by $\gamma p$, \eqref{sup} implies that \textcolor{black}{it is enough to estimate} $\mathbb{E}\frac{\|\tau(t)-\tau(s)\|_{\mathcal{C}^{\beta}}^p}{|x-y|^{\gamma' p+1}}$ \textcolor{black}{to bound the $p$-th moment of $[\tau]_{\beta,\gamma}$.}\\

To get these estimates, we will use that $\tau:t\mapsto \tau(t)$ is in the $n_{\tau}$-th inhomogeneous Wiener chaos (and therefore $\delta_k \tau(t)$ also) where $n_{\tau}$ is the number of leaves of $\tau$. In other words, we have that $(\tau(t),\phi)\in \mathcal{H}_{\le n_{\tau}}$ for all smooth functions $\phi$ on $\mathbb{T}^3$ and $t\in \R$, with furthermore an expectation equal to $0$. Since $\delta_k \tau(t)$ is a continuous function \textcolor{black}{in space} for all $k\ge -1$ and $t\in [0,T]$, we have $\delta_k\tau(t,x)\in \mathcal{H}_{\le n_{\tau}}$ for all $(t,x)\in [0,T]\times \mathbb{T}^3$. Hence \textcolor{black}{by} Fubini's theorem and Nelson's estimate (Proposition \ref{nelson}), we have
\begin{align*}
    \mathbb{E}[\|\delta_k\tau(t)-\delta_k \tau(s)\|_{L^p}^p]&\le \sup_{z\in \mathbb{T}^3} \mathbb{E}[|\delta_k \tau(t,z)-\delta_k \tau(s,z)|^p] \\
    &\le \sup_{z\in \mathbb{T}^3} C_{n_{\tau}}^p(p-1)^{\frac{n_{\tau}p}{2}} \left(\mathbb{E}[|\delta_k\tau(t,z)-\delta_k \tau(s,z)|^2]\right)^{\frac{p}{2}}.
\end{align*}

Then, we use estimates \textcolor{black}{of the type \cite{MWX}[(42)\&(44)]} \textcolor{black}{(see Section \ref{borneMWX})} and the proof of \cite[Proposition 5] {MWX} to get that for $\alpha=|\tau|-\lambda$ and all $(t,z)\in [0,T]\times \mathbb{T}^3$
\begin{equation}\mathbb{E}[|\delta_k \tau(t,z)-\delta_k \tau(s,z)|^2]\le C_1 |t-s|^{\lambda} 2^{-2k\alpha},\label{est}\end{equation}
so 
$$\mathbb{E}[\|\delta_k\tau(t)-\delta_k \tau(s)\|_{L^p}^p]\le |t-s|^{\frac{\lambda p}{2}} C_{n_{\tau}}^p C_1^{\frac{p}{2}}(p-1)^{\frac{n_{\tau}p}{2}} 2^{-kp\alpha}.$$
Since $\beta<\alpha$, Proposition \ref{critere} gives us that, for $p>\frac{3}{\alpha-\beta}+1$,
\begin{align*}\mathbb{E}[\|\tau(t)-\tau(s)\|_{\mathcal{C}^{\beta}}^p]&\le C_0^p  \sup_{k\ge -1} 2^{kp\alpha}|t-s|^{\frac{\lambda p}{2}} C_{n_{\tau}}^p C_1^{\frac{p}{2}} (p-1)^{\frac{n_{\tau}p}{2}} 2^{-k p \alpha} \\ &\le|t-s|^{\frac{\lambda p}{2}} C_0^p C_{n_{\tau}}^p C_1^{\frac{p}{2}} (p-1)^{\frac{n_{\tau}p}{2}} . \numberthis \label{in2}\end{align*}
\begin{remark}
    The estimate needed to get \eqref{est} for $\RSY$ is not explicitly proven in \cite{MWX}, but it is a \textcolor{black}{direct} consequence of the proof of the estimate for $\RSV$ \textcolor{black}{(see \cite{MWX}[(75)] where a bound on $\RSIW$ is derived from a bound on $\RSW$)}.
\end{remark}
Since $\gamma<\gamma'<\frac{\lambda}{2}$ and $0<|t-s|< T$ we have \textcolor{black}{for $p\ge\frac{1}{\frac{\lambda}{2}-\gamma'}=\frac{1}{\gamma'-\gamma}=\frac{2}{\frac{\lambda}{2}-\gamma}$ that}
\begin{equation}\label{cr}T^{\gamma'p+1-\frac{\lambda p}{2}}|t-s|^{\frac{\lambda p}{2}}\le |t-s|^{\gamma'p+1} \ \ \text{   and   } \ \ |t-s|^{\gamma'p-1}\le T^{\gamma' p-1-\gamma p}  |t-s|^{\gamma p}.\end{equation} Choosing $L\ge 8\cdot 4^{\frac{1}{p}} \frac{\frac{\lambda}{2}+\frac{1}{p}}{\frac{\lambda}{4}-\frac{1}{p}} \ge 8\cdot 4^{\frac{1}{p}} \frac{\gamma'+\frac{1}{p}}{\gamma'-\frac{1}{p}}$, \textcolor{black}{we take} $p>\max(\textcolor{black}{\frac{3}{\alpha-\beta}+1},\frac{4}{\lambda}+1,\frac{2}{\frac{\lambda}{2}-\gamma})\ge\max(\textcolor{black}{\frac{3}{\alpha-\beta}+1},\frac{1}{\gamma'}+1,\frac{2}{\frac{\lambda}{2}-\gamma})$, \textcolor{black}{and} we have from \eqref{sup}, \eqref{in2} and \eqref{cr}
\begin{align*}&\mathbb{E}\left[\sup_{\textcolor{black}{0\le}s<t\textcolor{black}{\le T}}\frac{\|\tau(t)-\tau(s)\|_{\mathcal{C}^{\beta}}^p}{|t-s|^{\gamma p}}\right] \\ &\le T^{\gamma'p-1-\gamma p-(\gamma'p+1-\frac{\lambda p}{2})} L^p \int_0^T \int_0^T \mathbb{E}\frac{\|\tau(x)-\tau(y)\|_{\mathcal{C}^{\beta}}^p}{|x-y|^{\frac{\lambda p}{2}}}\mathrm{d}x\mathrm{d}y \\ 
&\le T^{\frac{\lambda p}{2}-\gamma p} L^p C_0^p C_{n_{\tau}}^p C_1^{\frac{p}{2}} (p-1)^{\frac{n_{\tau}p}{2}}. \numberthis \label{m1}
\end{align*}
The left-hand side is exactly $\mathbb{E}([\tau]_{\beta,\gamma}^p)$, and these bounds on the moments of the Hölder constant allow us to control its tail. \textcolor{black}{We now use the definition of $\gamma_0$ given in the statement of Proposition \ref{ch}, that is tailored to get} $T^{\frac{\lambda}{2}-\gamma_0}\le2$ so that $T^{\frac{\lambda p}{2}-\gamma p}\le 2^p$. Finally, using that all considered quantities are positive, we observe that for $v>0$
\begin{align*}
    \mathbb{E}[\exp(v[\tau]_{\beta,\gamma}^{\frac{2}{n_{\tau}}})]&=\mathbb{E}\left[\sum_{k=0}^{+\infty} \frac{v^k [\tau]_{\beta,\gamma}^{\frac{2k}{n_{\tau}}}}{k!}\right] \\
    &\le \sum_{k=0}^{+\infty} \frac{v^k\mathbb{E}([\tau]_{\beta,\gamma}^{\frac{2k}{n_{\tau}}})}{k!} \\
    &\le \sum_{k=0}^{p_0-1} \frac{v^k\mathbb{E}([\tau]_{\beta,\gamma}^{\frac{2k}{n_{\tau}}})}{k!} + \sum_{k=p_0}^{+\infty} \frac{v^k 2^{\frac{2k}{n_{\tau}}} L^\frac{2k}{n_{\tau}} C_0^{\frac{2k}{n_{\tau}}}  C_{n_{\tau}}^\frac{2k}{n_{\tau}}C_1^{\frac{k}{n_{\tau}}} (\frac{2k}{n_{\tau}}-1)^{k}}{k!}
\end{align*}
    where $p_0=\lfloor\max(\frac{3}{\alpha-\beta}+1,\frac{4}{\lambda}+1,\textcolor{black}{\frac{2}{\frac{\lambda}{2}-\gamma}})\rfloor\textcolor{black}{+1}$ so that we can use \eqref{m1}. \\
    
    Since $\mathbb{E}([\tau]_{\beta,\gamma}^{\frac{2p_0}{n_{\tau}}})$ is bounded uniformly in $T$, the first sum is finite and the number of terms in it is independent of $T$, \textcolor{black}{and therefore} it converges to a quantity independent of $T$. For the second one, writing $K_0=2^{\frac{2}{n_{\tau}}}L^{\frac{2}{n_{\tau}}}C_0^{\frac{2}{n_{\tau}}}C_{n_{\tau}}^{\frac{2}{n_{\tau}}} C_1^{\frac{1}{n_{\tau}}}$, we have using Stirling's formula that
    \begin{align*}
        \frac{v^kK_0^k (\frac{2k}{n_{\tau}}-1)^k}{k!}\le \frac{v^kK_0^k(\frac{2}{n_{\tau}})^k k^k}{k!} &\sim (vK_0\frac{2}{n_{\tau}})^k \frac{1}{\sqrt{2\pi k}}(\frac{k}{k})^k e^k \\ &\sim (vK_0\frac{2}{n_{\tau}}e)^k \frac{1}{\sqrt{2\pi k}}
    \end{align*}
    which converges exponentially fast towards $0$ for $v=\ell_{\tau}$ where we have $\ell_{\tau}:=\frac{1}{2}\frac{n_{\tau}}{2e}2^{-\frac{2}{n_{\tau}}}L^{-\frac{2}{n_{\tau}}}C_0^{-\frac{2}{n_{\tau}}}C_{n_{\tau}}^{-\frac{2}{n_{\tau}}} C_1^{-\frac{1}{n_{\tau}}}>0$. \textcolor{black}{We emphasize that $\ell_{\tau}$ still depends of $\lambda$ since our definition of $L$ involves $\lambda$.}\\
    
    \textcolor{black}{We are close to the end of the proof, but before concluding we have to talk about homogeneity in $\sigma$. We first underscore that the constant in front of $\xi$ in the definition of $\RSI$ is not $1$ but $\sigma$, and therefore all occurrences of $\xi$ in equations are replaced by $\xi^{\sigma}=\sigma \xi$ and the iterated stochastic integrals are against element of the form $\xi^{\sigma}(\mathrm{d}z_1)\cdots\xi^{\sigma}(\mathrm{d}z_k)=\sigma^k \xi(\mathrm{d}z_1)\cdots \xi(\mathrm{d}z_k)$. We now recall that $\tau$ is a process in $\mathcal{H}_{\le n_{\tau}}$, so we can write} \begin{equation}\textcolor{black}{\tau=\sum_{\ell=0}^{n_{\tau}} T_\ell,} \label{hom}\end{equation}
    \textcolor{black}{with $T_\ell$ a process in $\mathcal{H}_\ell$. Since any element of $\mathcal{H}_{\ell}$ could be written as an iterated stochastic integral of order $\ell$ (see Appendix), the previous paragraph implies that $T_\ell=\sigma^\ell \tilde{T}_\ell$ with $\tilde{T}_\ell$ independent of $\sigma$. We wrote the computations that preceded with $\tau$ in order not to overload the presentation, but the reader can check that they trivially work for all $T_\ell$.}\\
    
    We can now conclude using Markov's inequality. We have that $\Pi_k \tau=\sigma^{k}\tilde{T}_{k}$ so
    \begin{align*}
        \mathbb{P}([\Pi_k\tau]_{\beta,\gamma}>h^{k})&=\mathbb{P}([\sigma^{k}\tilde{T}_{k}(t)]_{\beta,\gamma}>h^{k}) \\
        &\le\mathbb{P}([\tilde{T}_{k}(t)]_{\beta,\gamma}>\frac{h^{k}}{\sigma^{k}}) \\
        &\le\mathbb{P}\left(\exp(\ell_{\tau}[\tilde{T}_{k}(t)]_{\beta,\gamma}^{\frac{2}{k}})>\exp(\ell_{\tau}\frac{h^{2}}{\sigma^{2}})\right) \\
        &\le \exp(-\ell_{\tau} \frac{h^2}{\sigma^2})\mathbb{E}(\exp(\ell_{\tau}[\tilde{T}_{k}(t)]_{\beta,\gamma}^{\frac{2}{k}}))
    \end{align*}
We then take $m_{\tau}=\mathbb{E}(\exp(\ell_{\tau}[\tilde{T}_{k}(t)]_{\beta,\gamma}^{\frac{2}{k}}))$ which is finite and independent of $T$ and we are done.

\end{proof}
\textcolor{black}{Let us now state a direct Corollary that we will also invoke in the proof of Theorem \ref{thp}}
\begin{corollary} \label{chc}
    Let $\tau\in \mathcal{T}$, and $\beta<|\tau|-\lambda$ for $\lambda\in (0,1)$. Then there exist $\ell_{\tau}':=\ell_{\tau}'(\lambda,\textcolor{black}{\beta,a_{+},a_{-}})$ and $m_{\tau}':=m_{\tau}'(\lambda,\textcolor{black}{\beta,a_{+},a_{-}})$ independent of $T$ such that for $k\le n_{\tau}$ we have
    $$\mathbb{P}\left([\Pi_k\tau]_{\beta,\frac{\lambda}{2}}>h^{k}\right)\le m_{\tau}' \exp\left(-\ell_{\tau}'\frac{h^2}{\sigma^2}\right).$$
\end{corollary}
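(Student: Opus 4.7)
The plan is to deduce Corollary \ref{chc} from Proposition \ref{ch} applied at a slightly shifted parameter $\lambda^\ast = \lambda + \eta$, with $\eta = \eta(T) > 0$ chosen so that the endpoint value $\gamma = \lambda/2$ falls strictly inside the admissible interval $(\gamma_0(T, \lambda^\ast), \lambda^\ast/2)$ of the proposition. The hypothesis $\beta < |\tau| - \lambda$ leaves positive room, so one may fix once and for all
\[
\eta_\ast \in \bigl(0, \min(\lambda,\, 1 - \lambda,\, |\tau| - \lambda - \beta)\bigr),
\]
so that Proposition \ref{ch} is applicable with $\lambda$ replaced by any $\lambda^\ast \in [\lambda, \lambda + \eta_\ast]$ while still delivering a $\beta$-Hölder statement.

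To choose $\eta(T)$, recall from the proof of Proposition \ref{ch} that $\gamma_0(T, \lambda^\ast)$ is only constrained by $\gamma_0 \in (\lambda^\ast/4, \lambda^\ast/2)$ and $T^{\lambda^\ast/2 - \gamma_0} < 2$. For $T \le 1$ the second constraint is automatic, and compatibility with $\gamma_0 < \lambda/2$ only requires $\lambda^\ast/4 < \lambda/2$, i.e.\ $\eta < \lambda$, which holds for every $\eta \in (0, \eta_\ast]$; so set $\eta(T) = \eta_\ast$. For $T > 1$, compatibility forces $\lambda^\ast/2 - \log 2/\log T < \lambda/2$, i.e.\ $\eta < 2\log 2/\log T$; taking $\eta(T) = \min(\eta_\ast,\, \log 2/\log T)$ makes this inequality strict. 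In both cases $\gamma_0(T, \lambda^\ast)$ can be picked in $\bigl(\max(\lambda^\ast/4,\, \lambda^\ast/2 - \log 2/\log T),\, \lambda/2\bigr)$, placing $\gamma = \lambda/2$ strictly inside the admissible interval.

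Proposition \ref{ch} applied at parameter $\lambda^\ast = \lambda + \eta(T)$ with $\gamma = \lambda/2$ then yields directly
\[
\mathbb{P}\bigl([\Pi_k \tau]_{\beta, \lambda/2} > h^k\bigr) \le m_\tau(\lambda^\ast) \exp\bigl(-\ell_\tau(\lambda^\ast)\, h^2/\sigma^2\bigr).
\]
The final step is to render this bound uniform in $T$, i.e.\ uniform in $\eta(T) \in (0, \eta_\ast]$. Going back through the construction in the proof of Proposition \ref{ch}, the constants $\ell_\tau(\mu)$ and $m_\tau(\mu)$ are assembled from the Garsia constant $L$, Nelson's constant $C_{n_\tau}$, the Besov constant $C_0$, the critical exponent $p_0$, and the second-moment constant $C_1(\mu)$ of \eqref{est}, each depending continuously on $\mu$ over the compact interval $[\lambda, \lambda + \eta_\ast]$, with $\ell_\tau(\mu) > 0$ throughout. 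Defining $\ell_\tau'(\lambda) := \inf_{\mu \in [\lambda, \lambda + \eta_\ast]} \ell_\tau(\mu) > 0$ and $m_\tau'(\lambda) := \sup_{\mu \in [\lambda, \lambda + \eta_\ast]} m_\tau(\mu) < +\infty$, both $T$-independent, completes the proof.

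The main obstacle I anticipate is precisely this last uniform bound: it requires a careful audit of the $\lambda$-dependence of every constant manufactured in the proof of Proposition \ref{ch}, and most delicately of the second-moment constant $C_1(\mu)$ inherited from the estimates of \cite{MWX}, to verify that nothing degenerates as $\eta(T) \downarrow 0$ along $T \to \infty$.
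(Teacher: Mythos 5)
Your proposal takes a genuinely different route from the paper, and it has a gap at exactly the point you flag as "the main obstacle."

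The paper does not shift $\lambda$ at all. It applies Proposition~\ref{ch} twice, once at the given $\lambda$ and once at a fixed second parameter $\lambda' \in (\lambda, |\tau|-\beta)$, both chosen once and independently of $T$. Because Proposition~\ref{ch} delivers a bound that is uniform over the whole admissible interval of $\gamma$'s, one can pick $\gamma_1 \in (\gamma_0(T,\lambda),\lambda/2)$ and $\gamma_2 \in (\tilde\gamma_0(T,\lambda'),\lambda'/2)$ with $\gamma_1 < \lambda/2 < \gamma_2$ (the $T$-dependence in $\gamma_0,\tilde\gamma_0$ is absorbed by this freedom of choice, not by changing the $\lambda$-parameter). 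Then the elementary observation
\[
[\Pi_k\tau]_{\beta,\lambda/2}\le [\Pi_k\tau]_{\beta,\gamma_1}+[\Pi_k\tau]_{\beta,\gamma_2},
\]
obtained by splitting the supremum over $|t-s|\le 1$ (dominated by the $\gamma_2$ seminorm) and $|t-s|>1$ (dominated by the $\gamma_1$ seminorm), lets one combine the two tail bounds. The final constants $\ell_\tau',m_\tau'$ are then assembled from $\ell_\tau(\lambda),\ell_\tau(\lambda'),m_\tau(\lambda),m_\tau(\lambda')$, all manifestly $T$-independent since both inputs $\lambda,\lambda'$ are fixed.

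Your version instead applies Proposition~\ref{ch} once at a $T$-dependent parameter $\lambda^*(T)\in[\lambda,\lambda+\eta_*]$ and then needs $\inf_\mu\ell_\tau(\mu)>0$ and $\sup_\mu m_\tau(\mu)<\infty$ over that compact interval. That step is not a formality: the Garsia constant $L$ in the proof of Proposition~\ref{ch} is a supremum of $8\cdot 4^{1/p}(\mu/2+1/p)/(\mu/4-1/p)$ over an open range $p>p_0(\mu)$ whose endpoint moves with $\mu$; $p_0(\mu)=\max(3/(\alpha-\beta)+1,4/\mu+1)$ is built to be exceeded by integers, so it enters the series in a piecewise manner; and $m_\tau(\mu)$ is an exponential moment $\mathbb{E}\exp(\ell_\tau(\mu)[\tilde T_k]^{2/k}_{\beta,\gamma})$ whose finiteness was established by a $T$-uniform series argument, not by a statement of joint continuity in $\mu$. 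None of these facts contradicts your claim, but none of them is established either, and without that audit the proof is incomplete. The paper's two-parameter version buys exactly this: by never varying $\lambda$ with $T$, it never has to prove anything about the $\mu$-dependence of the constants, at the small price of one extra (and elementary) splitting inequality.
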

\begin{proof}
    We just have to use Proposition \ref{ch} for $\lambda$ and $\lambda'\in (\lambda,|\tau|-\beta)$ where the upper bound gives us that $\beta<|\tau|-\lambda'$. There exists $\gamma_0(T,\lambda)<\frac{\lambda}{2}$ and $\tilde{\gamma}_0(T,\lambda')<\frac{\lambda'}{2}$ such that we have the wanted property \textcolor{black}{for all} $\gamma_1\in (\gamma_0,\frac{\lambda}{2})$ and $\gamma_2\in(\tilde{\gamma}_0,\frac{\lambda'}{2})$, so we take $\gamma_1<\frac{\lambda}{2}<\gamma_2$ and then use that $$[\Pi_k \tau]_{\beta,\frac{\lambda}{2}}\le [\Pi_k \tau]_{\beta,\gamma_1,}+[\Pi_k \tau]_{\beta,\gamma_2}$$
    to conclude.
\end{proof}
\textcolor{black}{We can now prove} Theorem \ref{cs}, \textcolor{black}{that} is a direct consequence of Corollary \ref{chc}. Indeed, for $\alpha<|\tau|-\lambda$ where we can choose $\lambda$ arbitrarily small, using that $\tau(0)=0$ we have
\begin{align*}
    \mathbb{P}\left(\sup_{t\in [0,T]} \|\Pi_k\tau(t)\|_{\mathcal{C}^{\alpha}}>h^{k}\right)&= \mathbb{P}(\sup_{t\in [0,T]}\|\Pi_k\tau(t)-\Pi_k\tau(0)\|_{\mathcal{C}^{\alpha}}>h^k) \\
    &\le \mathbb{P}(T^{\frac{\lambda}{2}} [\Pi_k \tau]_{\alpha,\frac{\lambda}{2}}>h^k)\\
    &\le m_{\tau}'\exp(-\frac{\ell_{\tau}'}{(T^{\frac{\lambda}{2}})^{\frac{2}{k}}}\frac{h^2}{\sigma^2})\\
    &\le m_{\tau}'\exp(-\frac{\ell_{\tau}'}{(T^{\lambda})^{\frac{1}{k}}}\frac{h^2}{\sigma^2})
\end{align*}
and the proof is complete for $d_{\tau}=m_{\tau}'$ and $c_{\tau}=\ell_{\tau}'$

\subsection{Proof of the main theorems}
\textcolor{black}{In this subsection, we will first give an explicit expression of the functions $F$ and $G$ introduced in Lemma \ref{defvw} and explain why \eqref{e:eqvw} admits a regular solution. Then, we will prove several technical Lemmas that we will combine with Theorem \ref{cs} to prove Theorem \ref{thp}. Finally, we will deduce Corollary \ref{corp} from Theorem \ref{thp}.}
\subsubsection{Proof of Lemma \ref{defvw} \label{ce}}
We start by doing the computations mentioned \textcolor{black}{at} the end of Section \ref{dth}. They are essentially the same as the ones found in \cite{MW}, but with extra terms due to $f_2$. We need to renormalize a second time \textcolor{black}{the equation of $\phi$}, and we therefore \textcolor{black}{consider} $\theta:=\lim_{n\to+\infty} \theta_n$ where $\theta_n$ is the unique solution with initial condition $0$ of the equation
\begin{align*}
(\partial_t-\Delta -a(t))\theta_n&=-\theta_n^3-3\RSI_n \theta_n^2-3\RSV_n\theta_n -\RSW_n \\ & \ \ \ \ \  +f_2(t)\theta_n^2+2f_2(t) \theta_n \RSI+f_2(t)\RSV_n\textcolor{red}{-6\tilde{c}_n(t)(3\RSI_n+3\theta_n)} .
\end{align*}
We then take $u_n=\theta_n+\RSIW_n$ and decompose the product $\RSV_n(u_n-\RSIW_n)$ into its paraproducts and resonant product parts
\begin{align*}\textcolor{black}{\RSV_n(u_n-\RSIW_n)}&=\textcolor{black}{\RSV_n\pl (u_n-\RSIW_n)+\RSV_n\pe (u_n-\RSIW_n)} \\ & \ \ \ \ \ \ \ \ \ \textcolor{black}{+\RSV_n\pg (u_n-\RSIW_n)}\end{align*} Since $(\partial_t-\Delta -a(t))\RSIW_n=\RSW_n$ we have
\begin{align*}
&(\partial_t-\Delta -a(t))u_n \\ &=-(u_n-\RSIW_n)^3-3(u_n-\RSIW_n)^2 \RSI_n -3(u_n-\RSIW_n)\RSV_n\\ & \ \ \ \ \ \ +f_2(t)(u_n-\RSIW_n)^2+2f_2(t) (u_n-\RSIW_n) \RSI+f_2(t)\RSV_n \\
& \ \ \ \ \ \ \ \ \ \ \ \ -6\tilde{c}_n(t)(3\RSI_n+3(u-\RSIW_n)) \\
&=-u_n^3-3(u_n\pe \RSV_n+6\tilde{c}_n(t)(u_n-\RSIW_n))+3(\RSIW_n\pe\RSV_n-6\tilde{c}_n(t)\RSI_n)\\ & \ \ \ \ \ \ -3(u_n-\RSIW_n)\pl \RSV_n-3(u_n-\RSIW_n)\pg \RSV_n+Q^n(u_n) \\
&=-u_n^3-3(u_n\pe \RSV_n+6\tilde{c}_n(t)(u_n-\RSIW_n))+3\RSWW_n\\ & \ \ \ \ \ \ -3(u_n-\RSIW_n)\pl \RSV_n-3(u_n-\RSIW_n)\pg \RSV_n
+Q^n(u_n)\end{align*}
where we have $Q^n(u_n)=q_2^n(t)u_n^2+q_1^n(t)u_n+q_0^n(t)$ with:
\begin{align*}
    &q_0^n(t)=(\RSIW_n)^3-3\RSI_n (\RSIW_n)^2+f_2(t) (\RSIW_n)^2-2f_2(t)\RSIW_n \RSI_n+f_2(t)\RSV_n  \\
    &q_1^n(t)=-3(\RSIW_n)^2+6\RSI_n \RSIW_n-2f_2(t)\RSIW_n+2f_2(t)\RSI_n \\
    &q_2^n=-3\RSI_n+3\RSIW_n+f_2(t)
\end{align*}
\textcolor{black}{As we said in Section \ref{dth}, the term $+6\tilde{c}_n(t)(u_n-\RSIW_n)$ is diverging in $n$, and it is not clear how it compensates with $u_n\pe \RSV_n$. To end up with a limit equation, we have to cut $u_n$ into three parts and apply to them a non-trivial procedure that will generates new $\RSWV_n$ terms to be put in $q_0$ and $q_1$. We will not enter into the technical details, but} following exactly the same method as in \cite{MW} (the term with $m$ is here replaced with $a(t)$ and put in the left-hand side instead of the right-hand side) and putting the new $+f_2(t)\RSV_n$ term of $q_0(t)$ \textcolor{black}{into} the equation of $v_n$, we can write $u_n=v_n+w_n+3I(\RSWW_n)$ such that $(v,w)\textcolor{black}{:=\lim_{n\to +\infty}(v_n,w_n)}$ \textcolor{black}{should satisfy} \eqref{e:eqvw}

\begin{equation}\tag{\ref{e:eqvw}}
\left\{
\begin{array}{lll}
(\partial_t - \Delta-a(t)) v & = & F(v+w),\\
(\partial_t - \Delta-a(t)) w & = & G(v,w),
\end{array}
\right.
\end{equation}
where the explicit expression of $F$ and $G$ are
\begin{align*}
F(v+w) & := -3(v+w- \RSIW ) \pl \RSV+f_2(t)\RSV, \\
G(v,w) & := -(v+w)^3  - 3 \mathsf{com}(v,w) \\
& \qquad \qquad -3w \pe \RSV - 3(v+w-\RSIW) \pg\RSV + P(v+w), 
\end{align*} 
where $P(X)=d_2(t) X^2+d_1(t)X+d_0(t)$ is a random polynomial of coefficients:
\begin{align*}
    &d_0(t)= (\RSIW)^3-9 \RSIW \RSWV +f_2(t) (\RSIW)^2-2f_2(t)[\RSVW+\RSI\pne \RSIW] \\ & \ \ \ \ \ \ \ \ \ -3\left[\RSI \pne (\RSIW)^2+\RSI \pe [\RSIW \pe \RSIW]+2 \RSIW \RSVW +2[\pl,\pe](\RSIW,\RSIW,\RSI)\right] \\
    &d_1(t)=6\left[\RSIW \pne \RSI+\RSVW \right]-3(\RSIW)^2+9\RSWV -2f_2(t)\RSIW+2f_2(t)\RSI\\
    &d_2(t)=-3\RSI+3\RSIW+f_2(t)
\end{align*}
and $\mathsf{com}(v,w)=\text{com}_1(v,w)\pe \RSV+\text{com}_2(v+w)$ where:
$$\left \{ \begin{array}{ll}
     \text{com}_1(v,w)(t)&=v(t)+[3(v+w-\RSIW)\textcolor{black}{-f_2}](t)\textcolor{black}{\pl \RSY(t)}  \\
     \text{com}_2(v+w)&= [\pl,\pe](-3(v+w-\RSIW),\RSY,\RSV)
\end{array} \right.$$

\begin{remark} The removal of the term $+3I(\RSWW)$ from the definition of $w$ found in \cite{MW} implies the disappearance of the term $3\RSWW$ from the definition of $d_0$.\end{remark}
\textcolor{black}{However we did not prove yet that \eqref{e:eqvw} is well-posed. It is precisely what \cite{MW}[Theorem 2.1] says, while in addition giving us that for $(v_0,w_0)=(0,0)\in \mathcal{C}^{1-2\varepsilon}\times \mathcal{C}^{\frac{3}{2}-2\varepsilon}$ we have that $v$ belongs to $\mathcal{C}([0,T],\mathcal{C}^{1-2\varepsilon})\cap \mathcal{C}^{\frac{1}{8}+\delta}([0,T],L^{\infty})$ and $w$ to $\mathcal{C}([0,T],\mathcal{C}^{\frac{3}{2}-2\varepsilon})\cap \mathcal{C}^{\frac{1}{8}+\delta}([0,T],L^{\infty})$ for $\delta$ small enough.} 
\begin{remark}
    \textcolor{black}{In fact, the statement made in \cite{MW}[Theorem 2.1] is slightly weaker than that, giving us only a spatial regularity of $\frac{1}{2}+2\varepsilon$ for $v$ and $1+2\varepsilon$ for $w$. However, the regularity considered in \cite{MW}[Theorem 2.1] is sub-optimal, and thanks to \cite{MW}[Remark 2.5] we have the wanted result. The paragraphs following this remark also assure us that for $(v_0,w_0)=(0,0)$, the solution of \eqref{e:eqvw} is global.}
\end{remark}

\textcolor{black}{Before proving the main theorem we will state and prove a technical Lemma that will reveal useful latter.}
\begin{lemma}\label{cont}
    \textcolor{black}{The application $t\in[0,T]\mapsto \|(v,w)\|_{\Xi,t}$ is continuous.}  
\end{lemma}
\begin{proof}
    \textcolor{black}{We know that $(v,w)\in \mathcal{C}([0,T],\mathcal{C}^{1-2\varepsilon})\times\mathcal{C}([0,T],\mathcal{C}^{\frac{3}{2}-2\varepsilon})$ so $t\mapsto \max(\sup_{s\in[0,t]}\|v(s)\|_{\mathcal{C}^{1-2\varepsilon}},\sup_{s\in[0,t]}\|w(s)\|_{\mathcal{C}^{\frac{3}{2}-2\varepsilon}})$ is by definition continuous.}\\

    \textcolor{black}{To deal with the $[v_{|[0,t]}]$ and $[w_{|[0,t]}]$ terms we have to use the fact that $(v,w)$ is not only in $\mathcal{C}^{\frac{1}{8}}([0,T],L^{\infty})\times \mathcal{C}^{\frac{1}{8}}([0,T],L^{\infty})$ but in $\mathcal{C}^{\frac{1}{8}+\delta}([0,T],L^{\infty})\times \mathcal{C}^{\frac{1}{8}+\delta}([0,T],L^{\infty})$.}\\

    \textcolor{black}{The triangular inequality implies $[v_{|[0,t+\eta]}]_{0,\frac{1}{8}}\le [v_{|[0,t]}]_{0,\frac{1}{8}}+[v_{|[t,t+\eta]}]_{0,\frac{1}{8}}$ and $[v_{|[0,t]}]_{0,\frac{1}{8}}\le [v_{|[0,t-\eta]}]_{0,\frac{1}{8}}+[v_{|[t-\eta,t]}]_{0,\frac{1}{8}}$.  The application $t\mapsto [v_{|[0,t]}]$ being increasing, we therefore just have to prove that $[v_{|[t-\eta,t]}]_{0,\frac{1}{8}}$ and $[v_{|[t,t+\eta]}]_{0,\frac{1}{8}}$ go to $0$ when $\eta$ goes to $0$ to prove that it is also left and right-continuous (i.e. continuous). Since $v\in \mathcal{C}^{\frac{1}{8}+\delta}([0,T],L^{\infty})$, we have that $[v_{|[t,t+\eta]}]_{0,\frac{1}{8}}\le \eta^{\delta}[v_{|[t,t+\eta]}]_{0,\frac{1}{8}+\delta}\le \eta^{\delta}[v]_{0,\frac{1}{8}+\delta}\underset{\delta\to 0}{\to}0$. This proves the result for $v$, and since the proof also works for $w$, we are done.}
\end{proof}

\subsubsection{Proof of Theorem \ref{thp} \label{pthp}}

We write $C$ for a universal constant independent of $T$ and $\sigma$ to simplify the notations. In what follows, if we do not explicitly write the temporal variable, it means that the result is true for all $s\in [0,T]$ and that the constants involved in the computations are uniform in time. Since proving Theorem \ref{thp} for a specific $\varepsilon>0$ immediately implies the result for all $\varepsilon'\ge \varepsilon$, we assume from now on that $\varepsilon\in (0,\frac{1}{16})$. \\

\textcolor{black}{To control $\|(v,w)\|_{\Xi,T}$, we need to control four terms: $\sup_{s\in[0,T]}\|v(s)\|_{\mathcal{C}^{1-2\varepsilon}}$, $\sup_{s\in[0,T]}\|w(s)\|_{\mathcal{C}^{\frac{3}{2}-2\varepsilon}}, [v]_{0,\frac{1}{8}}$ and $[w]_{0,\frac{1}{8}}$. From \eqref{e:eqvw} we can already deduce bounds on them involving $F$ and $G$.}
\begin{lemma} \label{bornefg}
    \textcolor{black}{Let $t_0\in [0,T]$. We have the following four bounds:}
    \begin{align*}
        \sup_{r\in[0,t_0]} \|v(r)\|_{\mathcal{C}^{1-2\varepsilon}}&\le C \sup_{r\in [0,t_0]}\|F(v+w)(r)\|_{\mathcal{C}^{-1-\varepsilon}} \numberthis \label{majv}\\
   \sup_{r\in[0,t_0]}\|w(r)\|_{\mathcal{C}^{\frac{3}{2}-2\varepsilon}}&\le C \sup_{r\in [0,t_0]} \|G(v,w)(r)\|_{\mathcal{C}^{-\frac{1}{2}-\varepsilon}}\numberthis\label{majw}\\
    [v_{|[0,t_0]}]_{0,\frac{1}{8}}&\le C \sup_{r\in [0,t_0]} \|F(v+w)(r)\|_{\mathcal{C}^{-1-\varepsilon}}, \numberthis \label{lipv} \\ [w_{|[0,t_0]}]_{0,\frac{1}{8}}&\le C \sup_{r\in [0,t_0]} \|G(v,w)(r)\|_{\mathcal{C}^{-\frac{1}{2}-\varepsilon}}. \numberthis \label{lipw} \end{align*}

\end{lemma}
\begin{proof}
    \textcolor{black}{In this proof we will use massively the fact that}
$$v(t)=\int_0^t e^{\alpha(t,r)}e^{(t-r)\Delta}F(v+w)(r)\mathrm{d}r,$$
\textcolor{black}{for $\alpha(t,r)=\int_r^t a(s)\mathrm{d}s$}.\\

    \textcolor{black}{To prove the bound on $\|v\|$,} \textcolor{black}{we use} Schauder's estimate (Proposition \ref{es}), the inequality $\sup a=-a_{+}<0$ and the fact that $\alpha\mapsto \|\cdot\|_{\mathcal{C}^{\alpha}}$ is non-decreasing:
\begin{align*}
    \|v(t)\|_{\mathcal{C}^{1-2\varepsilon}}&=\|\int_0^t e^{\alpha(t,r)}e^{(t-r)\Delta}F(v+w)(r)\mathrm{d}\textcolor{black}{r}\|_{\mathcal{C}^{1-2\varepsilon}} \\
    &\le\int_0^t e^{\alpha(t,r)}\|e^{(t-r)\Delta}F(v+w)(r)\|_{\mathcal{C}^{1-2\varepsilon}} \mathrm{d}r \\
    &\le\int_0^t e^{-a_{+}(t-r)} C \, (t-r)^{\frac{-1-\varepsilon-(1-2\varepsilon)}{2}}\|F(v+w)(r)\|_{\mathcal{C}^{-1-\varepsilon}} \mathrm{d}r \\
    &\le C\sup_{r\in [0,t]}\|F(v+w)(r)\|_{\mathcal{C}^{-1-\varepsilon}} \int_0^t e^{-a_{+}r} r^{-1+\frac{\varepsilon}{2}}\mathrm{d}r \\
    &\le C \sup_{r\in [0,t]}\|F(v+w)(r)\|_{\mathcal{C}^{-1-\varepsilon}} .
\end{align*}
\textcolor{black}{where $C$ is independent of $T$ since $r\mapsto e^{-a_{+}r} r^{-1+\frac{\varepsilon}{2}}$ is integrable. This inequality holds for all $t\in [0,T]$, so we can take the supremum in $t\in[0,t_0]$ on both sides to get \eqref{majv}. The same method applied on $w$ gives us \eqref{majw}.}\\

\textcolor{black}{To prove the bound on $[v]$}, we will use a trick found in \cite{MW}. We have for $0\le s<t\le T$ that
\begin{align*}
    v(t)-v(s)&=\int_s^t e^{\alpha(t,r)} e^{(t-r)\Delta}F(v+w)(r) \mathrm{d}r \\
    & \ \ \ \ \ +(e^{\alpha(t,s)}-1)\int_0^s e^{\alpha(s,r)} e^{(t-r)\Delta}F(v+w)(r) \mathrm{d}r \\
    & \ \ \ \ \ \ \ \ \ \ +(e^{(t-s)\Delta}-1)\int_0^s e^{\alpha(s,r)} e^{(s-r)\Delta}F(v+w)(r) \mathrm{d}r.   
\end{align*}
We then bound the $L^{\infty}$ norm of each of these three terms. For the first one we have
\begin{align*}
    &\|\int_s^t e^{\alpha(t,r)} e^{(t-r)\Delta}F(v+w)(r) \mathrm{d}r\|_{L^{\infty}} \\ &\le \int_s^t e^{\alpha(t,r)} \|e^{(t-r)\Delta}F(v+w)(r)\|_{L^{\infty}}\mathrm{d}r \\
    &\le C\int_s^t e^{-a_+(t-r)} (t-r)^{-\frac{1+\varepsilon}{2}} \|F(v+w)(r)\|_{\mathcal{C}^{-1-\varepsilon}}\mathrm{d}r \\
    &\le C \sup_{r\in [0,t]} \|F(v+w)(r)\|_{\mathcal{C}^{-1-\varepsilon}} \int_s^t e^{-a_+(t-r)}  (t-r)^{-\frac{1+\varepsilon}{2}} \mathrm{d}r  \\
    &\le C \sup_{r\in [0,t]} \|F(v+w)(r)\|_{\mathcal{C}^{-1-\varepsilon}} 1\wedge(t-s)^{1-\frac{1+\varepsilon}{2}}\\
    &\le C \sup_{r\in [0,t]} \|F(v+w)(r)\|_{\mathcal{C}^{-1-\varepsilon}} (t-s)^{\frac{1}{8}}.
\end{align*}
where $C$ is independent of $T$. We then consider the second term \textcolor{black}{and use} the inequalities \textcolor{black}{\eqref{inc}}
\begin{align*}
   & \|(e^{\alpha(t,s)}-1)\int_0^s e^{\alpha(s,r)} e^{(t-r)\Delta}F(v+w)(r) \mathrm{d}r\|_{L^{\infty}} \\ &\le |e^{\alpha(t,s)}-1| \int_0^s e^{-a_{+}(s-r)} \|e^{(t-u)\Delta}F(v+w)(r)\|_{L^{\infty}} \\
    &\le C \sup_{r\in [0,t]} \|F(v+w)(r)\|_{\mathcal{C}^{-1-\varepsilon}}(1\wedge a_{-} (t-s))\int_0^s e^{-a_{+}(t-r)} (t-r)^{-\frac{1+\varepsilon}{2}}\mathrm{d}r \\
    &\le C \sup_{r\in [0,t]} \|F(v+w)(r)\|_{\mathcal{C}^{-1-\varepsilon}}(1\wedge a_{-} (t-s)) \\
    &\le C \sup_{r\in [0,t]} \|F(v+w)(r)\|_{\mathcal{C}^{-1-\varepsilon}} (t-s)^{\frac{1}{8}}
\end{align*}
Finally, for the third term, we use the second inequality of Proposition \ref{es} to control the operator $(e^{(t-s)\Delta}-1)$
\begin{align*}
    &\|(e^{(t-s)\Delta}-1)\int_0^s e^{\alpha(s,r)} e^{(s-r)\Delta}F(v+w)(r) \mathrm{d}r \|_{L^{\infty}} \\ &\le (t-s)^{\frac{\frac{1}{4}-0}{2}}\|\int_0^s e^{\alpha(s,r)} e^{(s-r)\Delta}F(v+w)(r) \mathrm{d}r\|_{\mathcal{C}^{\frac{1}{4}}} \\
    &\le C (t-s)^{\frac{1}{8}}\int_0^s e^{-a_{+}(s-r)}(s-r)^{-\frac{1+\varepsilon+\frac{1}{4}}{2}}\|F(v+w)(r)\|_{\mathcal{C}^{-1-\varepsilon}}\mathrm{d}r \\
    &\le C (t-s)^{\frac{1}{8}} \sup_{r\in [0,t]} \|F(v+w)(r)\|_{\mathcal{C}^{-1-\varepsilon}} \int_0^s e^{-a_{+}(s-r)}(s-r)^{-\frac{1+\varepsilon+\frac{1}{4}}{2}} \\
    &\le C (t-s)^{\frac{1}{8}} \sup_{r\in [0,t]} \|F(v+w)(r)\|_{\mathcal{C}^{-1-\varepsilon}}
\end{align*}
Dividing by $(t-s)^{\frac{1}{8}}$ on both sides of the three inequalities and taking the supremum in $0\le s<t\le t_0$, we eventually get \textcolor{black}{\eqref{lipv}}. Following exactly the same strategy \textcolor{black}{for $w$}, we get \textcolor{black}{\eqref{lipw}}.

\end{proof}

\textcolor{black}{Lemma \ref{bornefg} implies that to control $\|(v,w)\|_{\Xi,T}$, we only need to control $F(v+w)$ and $G(v,w)$. Therefore, we will look for precise bound on these two terms in the following pages. Since these bounds will involve $\|v\|$, $\|w\|$, $[v]$ and $[w]$, that is to say the quantities we want to control in the first place, they may appear at first glance unusable. We will explain at the end of the proof how we can use them nonetheless.} \\

\textcolor{black}{Starting with $F$, we have by} Proposition \ref{ip}
\begin{align*}
    &\|F(v+w)\|_{\mathcal{C}^{-1-\varepsilon}} \\ &\le 3\|(v+w- \RSIW ) \pl \RSV\|_{\mathcal{C}^{-1-\varepsilon}}+\|f_2\|_{\infty}\|\RSV\|_{\mathcal{C}^{-1-\varepsilon}}\\
    &\le C \|v+w-\RSIW\|_{\mathcal{C}^{\frac{1}{2}-\varepsilon}} \times \|\RSV\|_{\mathcal{C}^{-1-\varepsilon}}+\|f_2\|_{\infty}\|\RSV\|_{\mathcal{C}^{-1-\varepsilon}} \\
    &\le C (\|v\|_{\mathcal{C}^{1-2\varepsilon}}+\|w\|_{\mathcal{C}^{\frac{3}{2}-2\varepsilon}}+\|\RSIW\|_{\mathcal{C}^{\frac{1}{2}-\varepsilon}}+\|f_2\|_{\infty}) \|\RSV\|_{\mathcal{C}^{-1-\varepsilon}}. \numberthis \label{majF}
\end{align*}

\textcolor{black}{Regarding $G$, since its expression} is more complex than the one of $F$, we will study its five terms one after the other. For the cubic term $(v+w)^3$ we have
\begin{align*}
    \|-(v+w)^3\|_{\mathcal{C}^{-\frac{1}{2}-\varepsilon}}&\le \|(v+w)^3\|_{\mathcal{C}^{1-2\varepsilon}} \\
    &\le C \|v+w\|_{\mathcal{C}^{1-2\varepsilon}}^3 \\
    &\le C(\|v\|_{\mathcal{C}^{1-2\varepsilon}}^3+\|w\|_{\mathcal{C}^{\frac{3}{2}-2\varepsilon}}^3). \numberthis \label{majG1}
\end{align*}
If we consider the resonant product of $\RSV$ and $w$ we have, thanks to Proposition, \ref{ip}
\begin{align*}
    \|3w\pe \RSV\|_{\mathcal{C}^{-\frac{1}{2}-\varepsilon}}&\le \|3w\pe \RSV\|_{\mathcal{C}^{\frac{1}{2}-3\varepsilon}} \\
    &\le C \|w\|_{\mathcal{C}^{\frac{3}{2}-2\varepsilon}} \|\RSV\|_{\mathcal{C}^{-1-\varepsilon}}. \numberthis \label{majG2}
\end{align*}
For the fourth term we have likewise
\begin{align*}
    &\|3(v+w-\RSIW) \pg\RSV\|_{\mathcal{C}^{-\frac{1}{2}-\varepsilon}} \\ &\le C \|v+w-\RSIW\|_{\mathcal{C}^{\frac{1}{2}-\frac{\varepsilon}{2}}} \|\RSV\|_{\mathcal{C}^{-1-\frac{\varepsilon}{2}}} \\
    &\le C(\|\RSIW\|_{\mathcal{C}^{\frac{1}{2}-\frac{\varepsilon}{2}}}+\|v\|_{\mathcal{C}^{1-2\varepsilon}}+\|w\|_{\mathcal{C}^{\frac{3}{2}-2\varepsilon}})\|\RSV\|_{\mathcal{C}^{-1-\frac{\varepsilon}{2}}}. \numberthis \label{majG3}
\end{align*}
With regards to the polynomial $P$ we have
\begin{align*}
    \|P(v+w)\|_{\mathcal{C}^{-\frac{1}{2}-\varepsilon}}&\le C\left[\|d_0\|_{\mathcal{C}^{-\frac{1}{2}-\varepsilon}}+\|d_1\|_{\mathcal{C}^{-\frac{1}{2}-\varepsilon}}\|v+w\|_{\mathcal{C}^{1-2\varepsilon}}\right. \\ & \ \ \ \ \ \  \ \ \ \ \ \left. +\|d_2\|_{\mathcal{C}^{-\frac{1}{2}-\varepsilon}}\|(v+w)^2\|_{\mathcal{C}^{1-2\varepsilon}}\right] \\
    &\le C\left[\|d_0\|_{\mathcal{C}^{-\frac{1}{2}-\varepsilon}}+\|d_1\|_{\mathcal{C}^{-\frac{1}{2}-\varepsilon}}(\|v\|_{\mathcal{C}^{1-2\varepsilon}}+\|w\|_{\mathcal{C}^{\frac{3}{2}-2\varepsilon}}) \right. \\ & \ \ \ \ \ \ \ \ \ \ \ \left. +\|d_2\|_{\mathcal{C}^{-\frac{1}{2}-\varepsilon}}(\|v\|_{\mathcal{C}^{1-2\varepsilon}}^2+\|w\|_{\mathcal{C}^{\frac{3}{2}-2\varepsilon}}^2)\right] \numberthis \label{majG4}
\end{align*}
The term involving $\text{com}$ is more complex. 
We have on the one hand, thanks to Proposition \ref{com}, that
\begin{align*}
    &\|\text{com}_2(v+w)\|_{\mathcal{C}^{-\frac{1}{2}-\varepsilon}} \\ &\le \|\text{com}_2(v+w)\|_{\mathcal{C}^{\frac{1}{2}-\varepsilon}} \\
    &\le C (\|v+w-\RSIW\|_{\mathcal{C}^{\frac{1}{2}-\frac{1}{3}\varepsilon}} \|\RSY\|_{\mathcal{C}^{1-\frac{1}{3}\varepsilon}} \|\RSV\|_{\mathcal{C}^{-1-\frac{1}{3}\varepsilon}}) \\
    &\le C \left( \|v\|_{\mathcal{C}^{1-2\varepsilon}}+\|w\|_{\mathcal{C}^{\frac{3}{2}-2\varepsilon}}+\|\RSIW\|_{\mathcal{C}^{\frac{1}{2}-\frac{1}{3}\varepsilon}}\right) \|\RSY\|_{\mathcal{C}^{1-\frac{1}{3}\varepsilon}} \|\RSV\|_{\mathcal{C}^{-1-\frac{1}{3}\varepsilon}} \numberthis \label{majG5}
\end{align*}
For $\text{com}_1$ on the other hand, integrating \eqref{e:eqvw} \textcolor{black}{and then using that $\RSV=1\pl \RSV+1\pge \RSV$} we have that \begin{align*}v(t)&=-3\int_0^t e^{\alpha(t,s)} e^{(t-s)\Delta} [(v+w-\RSIW)\pl \RSV](s)\mathrm{d}s \\ & \ \ \ \ \ \ \ \ \ \textcolor{black}{+\int_0^t e^{\alpha(t,s)} e^{(t-s)\Delta} f_2(s)\RSV(s)\mathrm{d}s} \\
&=-\int_0^t e^{\alpha(t,s)} e^{(t-s)\Delta} [3(v+w-\RSIW)\textcolor{black}{-f_2})\pl \RSV](s)\mathrm{d}s \\ & \ \ \ \ \ \ \ \ \ \textcolor{black}{+\int_0^t e^{\alpha(t,s)} e^{(t-s)\Delta} f_2(s)(1\pge\RSV(s))\mathrm{d}s}
\end{align*} and therefore \begin{align*}\text{com}_1(v,w)(t)&=-\int_0^t e^{\alpha(t,s)}e^{(t-s)\Delta} [3(v+w-\RSIW)\textcolor{black}{-f_2}]\pl \RSV](s)\mathrm{d}s \\ & \ \ \ \ \ \ \ \ +[3 (v+w-\RSIW)\textcolor{black}{-f_2}](t)\pl \RSY(t) \\
& \ \ \ \ \ \ \ \ \ \ \ \ \ \ \ \  \textcolor{black}{+\int_0^t e^{\alpha(t,s)} e^{(t-s)\Delta} f_2(s)(1\pge\RSV(s))\mathrm{d}s}.\end{align*} The computations associated with $\text{com}_1$ are the most subtle \textcolor{black}{part}. \textcolor{black}{We} recall from \cite{MW} that the motivation behind the definition of $\text{com}_1(v,w)$ is that we expect it to be a bit more regular than $v$ so that $\text{com}_1(v,w)\pe \RSV$ is well defined (while $v\pe \RSV$ is not). \\

\textcolor{black}{Indeed, while $v$ and $[3(v+w-\RSIW)-f_2]\pl \RSY$ are of regularity $\mathcal{C}^{-1-\varepsilon}$,} we \textcolor{black}{will show that we can} rewrite $\text{com}_1(v,w)(t)=A_t+B_t\textcolor{black}{+C_t}$ \textcolor{black}{where $C_t=\int_0^t e^{\alpha(t,s)} e^{(t-s)\Delta} f_2(s)(1\pge\RSV(s))\mathrm{d}s$ is in $\mathcal{C}^{\infty}$ since $1$ is in $\mathcal{C}^{\infty}$ (see Proposition \ref{ip}), and both $A$ and $B$ are of regularity $\mathcal{C}^{1+2\varepsilon}$.} \textcolor{black}{More precisely, we have}
\begin{align*}
    \textcolor{black}{\text{com}_1(v,w)(t)}&=-\int_0^t e^{\alpha(t,s)}  e^{(t-s)\Delta} [3(v+w-\RSIW)\textcolor{black}{-f_2})\pl \RSV](s)\mathrm{d}s \\
    &  \ \ \ \ \ \ \ +[3 (v+w-\RSIW)\textcolor{black}{+f_2}](t)\pl \RSY(t) \\
    & \ \ \ \ \ \ \  \ \ \ \ \ \ \  \textcolor{black}{+\int_0^t e^{\alpha(t,s)} e^{(t-s)\Delta} f_2(s)(1\pge\RSV(s))\mathrm{d}s} \\
     &=-\int_0^t  e^{\alpha(t,s)} [e^{(t-s)\Delta},\pl]([3(v+w-\RSIW)\textcolor{black}{-f_2}],\RSV)(s)\mathrm{d}s \\
    & \ \ \ \ \ \ \ -\int_0^t e^{\alpha(t,s)} [[3(v+w-\RSIW)\textcolor{black}{-f_2}]\pl   e^{(t-s)\Delta}\RSV](s)\mathrm{d}s \\
    & \ \ \ \ \ \ \ \ \ \ \ \ \ \ + [3(v+w-\RSIW)\textcolor{black}{-f_2}](t)\pl \RSY(t) \\
    & \ \ \ \ \ \ \ \ \ \ \ \ \ \ \ \ \ \ \ \ \ \textcolor{black}{+\int_0^t e^{\alpha(t,s)} e^{(t-s)\Delta} f_2(s)(1\pge\RSV(s))\mathrm{d}s}
\end{align*}
where $[e^{(t-s)\Delta},\pl](f,g)=e^{(t-s)\Delta}(f\pl g)-f\pl(e^{(t-s)\Delta} g)$. From now on, we denote by $A_t$ the first term on the \textcolor{black}{right}-hand side \textcolor{black}{of the equation above, $B_t$ the sum of the second and third ones, and $C_t$ the fourth one}. \textcolor{black}{Recall} that $\RSY(t)=\int_0^t e^{\alpha(t,s)} e^{(t-s)\Delta}\RSV(s)\mathrm{d}s$ \textcolor{black}{and let} $\delta_{s,t} f=f(s)-f(t)$. \textcolor{black}{We have that}
\begin{align*}
    \textcolor{black}{B_t:=}&-\int_0^t  [[3(v+w-\RSIW)\textcolor{black}{-f_2}]\pl  e^{\alpha(t,s)} e^{(t-s)\Delta}\RSV](s)\mathrm{d}s\\ & \ \ \ \ + [3(v+w-\RSIW)\textcolor{black}{-f_2}]\pl \RSY(t) \\
    &=-\int_0^t [[3(v+w-\RSIW)\textcolor{black}{-f_2}]\pl  e^{\alpha(t,s)}  e^{(t-s)\Delta} \RSV](s)\mathrm{d}s \\ & \ \ \ \ \ +[3(v+w-\RSIW)\textcolor{black}{-f_2}](t)\pl \left[\int_0^t  e^{\alpha(t,s)} e^{(t-s)\Delta}\RSV(s)\mathrm{d}s \right] \\
    &=-\int_0^t  [[\delta_{s,t}[3(v+w-\RSIW)\textcolor{black}{-f_2}]]\pl  e^{\alpha(t,s)} e^{(t-s)\Delta}\RSV(s)]\mathrm{d}s.   
\end{align*}
\textcolor{black}{Eventually}, using Proposition \ref{come} to bound the norm of $A_t$ and Proposition \ref{ip} combined with Proposition \ref{es} to bound the one\textcolor{black}{s} of $B_t$ \textcolor{black}{and $C_t$}, we get
\begin{align*}
    &\|\text{com}_1(v,w)(t)\|_{\mathcal{C}^{1+2\varepsilon}} \\ &\le \|A_t\|_{\mathcal{C}^{1+2\varepsilon}}+\|B_t\|_{\mathcal{C}^{1+2\varepsilon}}+\textcolor{black}{\|C_t\|_{\mathcal{C}^{1+2\varepsilon}}} \\ 
    &\le C \int_0^t e^{-a_{+}(t-s)}(t-s)^{\frac{\frac{1}{2}-\varepsilon-1-\varepsilon-(1+2\varepsilon)}{2}}\|[3(v+w-\RSIW)\textcolor{black}{-f_2}](s)\|_{\mathcal{C}^{\frac{1}{2}-\varepsilon}} \\ & \ \ \ \ \ \ \ \ \ \ \ \ \ \ \ \ \ \ \ \ \ \ \ \ \ \ \ \ \ \ \ \ \ \ \ \ \ \ \ \ \ \ \ \ \ \ \ \ \ \ \ \ \ \ \ \ \ \times \|\RSV(s)\|_{-1-\varepsilon}\mathrm{d}s \\
   & \ \ \ \ \ \ \ \ + C \int_0^t \|\delta_{s,t}[3(v+w-\RSIW)\textcolor{black}{-f_2}]\|_{L^{\infty}} \| e^{\alpha(t,s)} e^{(t-s)\Delta} \RSV(s)\|_{\mathcal{C}^{1+2\varepsilon}}\mathrm{d}s \\
   & \ \ \ \ \ \ \ \ \ \ \ \ \ \ \ \ \textcolor{black}{+ C \int_0^t \| e^{\alpha(t,s)} e^{(t-s)\Delta} f_2(s)(1\pge\RSV)(s)\|_{\mathcal{C}^{1+2\varepsilon}}\mathrm{d}s} \\
   &\le C \sup_{s\in[0,t]}\|[3(v+w-\RSIW)\textcolor{black}{-f_2}](s)\|_{\mathcal{C}^{\frac{1}{2}-\varepsilon}}\|\RSV(s)\|_{\mathcal{C}^{-1-\varepsilon}} \\
   & \ \ \ \ \ \ \ \ + C \int_0^t \|\delta_{s,t}[3(v+w-\RSIW)\textcolor{black}{-f_2}]\|_{L^{\infty}} \, e^{-a_{+}(t-s)}  (t-s)^{\frac{-1-\varepsilon-1-2\varepsilon}{2}} \\ & \ \ \ \ \ \ \ \ \ \ \ \ \ \ \ \ \ \ \ \ \ \ \ \ \ \times\|\RSV(s)\|_{\mathcal{C}^{-1-\varepsilon}}\mathrm{d}s \\
   & \ \ \ \ \ \ \ \ \ \ \ \ \ \ \ \ \textcolor{black}{+C\sup_{s\in[0,t]}|f_2(s)|\cdot\|\RSV(s)\|_{\mathcal{C}^{-1-\varepsilon}}}
\end{align*}
We then use the temporal regularity of $v,w$ and $\RSIW$ to argue that \begin{align*}&\|\delta_{s,t}[3(v+w-\RSIW)\textcolor{black}{-f_2}]\|_{L^{\infty}} \\ &\le \textcolor{black}{3[(v+w-\RSIW)_{|[0,t]}]_{0,\frac{1}{8}}} (t-s)^{\frac{1}{8}}+\textcolor{black}{\|f_2'\|_{\infty}(t-s)} \\ &\le\textcolor{black}{3\left([v_{|[0,t]}]_{0,\frac{1}{8}}+[w_{|[0,t]}]_{0,\frac{1}{8}}+[\RSIW_{|[0,t]}]_{0,\frac{1}{8}}\right)} (t-s)^{\frac{1}{8}}+\textcolor{black}{\|f_2'\|_{\infty}(t-s)},\end{align*} so the integral above converges since $\varepsilon<\frac{1}{16}$, and we have
\begin{align*}
    &\|\text{com}_1(v,w)(t)\|_{\mathcal{C}^{1+2\varepsilon}} \\ &\le C( \sup_{s\in [0,t]} [\|v(s)\|_{\mathcal{C}^{1-2\varepsilon}}+\|w(s)\|_{\mathcal{C}^{\frac{3}{2}-2\varepsilon}}+\|\RSIW(s)\|_{\mathcal{C}^{\frac{1}{2}-\varepsilon}}\textcolor{black}{+\|f_2\|_{\infty}}]\|\RSV(s)\|_{\mathcal{C}^{-1-\varepsilon}} \\ & \ \ \ \ \ \ \ \ +\textcolor{black}{\left([v_{|[0,t]}]_{0,\frac{1}{8}}+[w_{|[0,t]}]_{0,\frac{1}{8}}+[\RSIW_{|[0,t]}]_{0,\frac{1}{8}}+\|f_2'\|_{\infty}\right)} \sup_{s\in [0,t]} \|\RSV(s)\|_{\mathcal{C}^{-1-\varepsilon}})
\end{align*}

This quantity being finite, $\text{com}_1(v,w)(t)$ belongs to $\mathcal{C}^{1+2\varepsilon}$ and the product $\text{com}_1(v,w)(t)\pe \RSV(t)$ is therefore well-defined. Besides, Proposition \ref{ip} gives us that
\begin{align*}&\|\text{com}_1(v,w)\pe \RSV (t)\|_{\mathcal{C}^{\varepsilon}} \\ &\le C \|\text{com}_1(v,w)(t)\|_{\mathcal{C}^{1+2\varepsilon}} \|\RSV(t)\|_{\mathcal{C}^{-1-\varepsilon}} \\
&\le C \left\{ \sup_{s\in [0,t]} \left[\|v(s)\|_{\mathcal{C}^{1-2\varepsilon}}+\|w(s)\|_{\mathcal{C}^{\frac{3}{2}-2\varepsilon}}+\|\RSIW(s)\|_{\mathcal{C}^{\frac{1}{2}-\varepsilon}}\textcolor{black}{+\|f_2\|_{\infty}}\right] \right. \\
& \ \ \ \ \ \  \left. +\textcolor{black}{\left([v_{|[0,t]}]_{0,\frac{1}{8}}+[w_{|[0,t]}]_{0,\frac{1}{8}}+[\RSIW_{|[0,t]}]_{0,\frac{1}{8}}+\|f_2'\|_{\infty}\right)} \right\}\sup_{s\in [0,t]}  \|\RSV(s)\|_{\mathcal{C}^{-1-\varepsilon}}^2 . \numberthis \label{majG6}
\end{align*}

\textcolor{black}{We now have all the inequalities needed to control $\|F(v+w)\|_{\mathcal{C}^{-1-\varepsilon}}$ and $\|G(v,w)\|_{\mathcal{C}^{-\frac{1}{2}-\varepsilon}}$ with $\|(v,w)\|_{\Xi}$. In the following Lemma we will show that if we assume a control of order $h$ on $\|(v,w)\|_{\Xi}$ and on the norms of the symbols of $\mathcal{T}$, it will imply a control of order $h^2$ on $F$ and $G$. Such a control will reveal extremely powerful for $h\ll 1$.} 
\begin{lemma}\label{majFG}
    Let us take $h\in(0,1)$ and a given time $t_0\in\textcolor{black}{[0,T]}$. \textcolor{black}{We make three assumptions. First that}
    \begin{align*}&\textcolor{black}{\|(v,w)\|_{\Xi,t_0}} \\ &=\max(\sup_{s\in [0,t_0]} \|v(s)\|_{\mathcal{C}^{1-2\varepsilon}},\sup_{s\in[0,t_0]}\|w(s)\|_{\mathcal{C}^{\frac{3}{2}-2\varepsilon}}, [v_{|[0,t_0]}]_{0,\frac{1}{8}},[w_{|[0,t_0]}]_{0,\frac{1}{8}})\le h, \numberthis \label{hyp1}\end{align*}
    \textcolor{black}{second} that for all symbols $\tau\in \mathcal{T}\setminus\{\RSV\}$ and $k\le n_{\tau}$, \textcolor{black}{denoting $\alpha_{\tau}=|\tau|-\frac{\varepsilon}{3}$}
    \begin{equation}\text{}\sup_{s\in[0,t_0]}\|\Pi_k\tau(s)\|_{\mathcal{C}^{\alpha_{\tau}}}\le h^{k},\label{hyp2}\end{equation}
    \textcolor{black}{and third that}
\begin{equation}\textcolor{black}{(1\wedge T^{\frac{\lambda}{4}})^{-1}\sup_{s\in[0,t_0]}\|\RSV(s)\|_{\mathcal{C}^{1-\frac{\varepsilon}{3}}}\le h^2, \ \ \ \ \ \ 1\wedge T^{\frac{\lambda}{2}}[\RSIW_{|[0,t_0]}]_{0,\frac{1}{8}}\le h^3}.\label{hyp3}\end{equation}
\textcolor{black}{Then we have that}
\begin{equation}\sup_{s\in [0,t_0]} \|F(v+w)(s)\|_{\mathcal{C}^{-1-\varepsilon}}\le C h^2 \ \ \ \ \ \ \text{and }  \ \ \ \ \sup_{s\in [0,t_0]} \|G(v+w)(s)\|_{\mathcal{C}^{-\frac{1}{2}-\varepsilon}}\le C h^2 \label{majp}\end{equation}
\end{lemma}

\begin{proof}
 
 \textcolor{black}{The key idea is} is that under \eqref{hyp1},\eqref{hyp2} \textcolor{black}{and \eqref{hyp3}} \textcolor{black}{we can go back to \eqref{majF} for $F$ and \eqref{majG1},\eqref{majG2},\eqref{majG3},\eqref{majG4},\eqref{majG5} and \eqref{majG6} for $G$, and replace all occurrences of $\|v\|_{\mathcal{C}^{1-2\varepsilon}}$, $\|w\|_{\mathcal{C}^{\frac{3}{2}-2\varepsilon}}$, $[v_{|[0,t_0]}]_{0,\frac{1}{8}}$ and $[w_{|[0,t_0]}]_{0,\frac{1}{8}}$ by $h$, the one occurrence of $[\RSIW_{|[0,t_0]}]_{0,\frac{1}{8}}$ by $h^3(1\wedge T^{\frac{\lambda}{2}})^{-1}$ and all occurrences of $\|\tau\|$ by $h^{k_{\tau}}$ for $k_{\tau}$ the smallest $k$ such that $\tau$ has a non-zero component in the Wiener chaos of order $k$, \textcolor{black}{except from $\|\RSV\|$ which is replaced by $h^2(1\wedge T^{\frac{\lambda}{4}})$}.} \\

 \textcolor{black}{We observe that the powers of $h$ that appear in \eqref{majF},\eqref{majG1},\eqref{majG2},\eqref{majG3},\eqref{majG4},\eqref{majG5} and \eqref{majG6} are always superior or equal to $2$. Since $h\in(0,1)$, we have that $h^n\le h^2$ if $n\ge 2$, and therefore we can bound $F$ and $G$ by a constant times $h^2$, which is the wanted result.} \\
\end{proof}

\begin{remark}\label{ts}
    The second bound of Lemma \ref{majFG} would not hold if we \textcolor{black}{had not} removed $3I(\RSWW)$ from the definition of $w$. Indeed, the term $d_0$ would therefore contain a $3\RSWW$ whose component in the first Wiener chaos is only bounded by an $h$ (and not $h^2$) in our hypothesis.
\end{remark}

\textcolor{black}{Under \eqref{hyp1},\eqref{hyp2} and \eqref{hyp3}, we can combine Lemma \ref{bornefg} and Lemma \ref{majFG} to get that}
\begin{align*}
    &\textcolor{black}{\|(v,w)\|_{\Xi,t_0}}\\ &=\max(\sup_{s\in [0,t_0]} \|v(s)\|_{\mathcal{C}^{1-2\varepsilon}},\sup_{s\in[0,t_0]}\|w(s)\|_{\mathcal{C}^{\frac{3}{2}-2\varepsilon}},[v_{|[0,t_0]}]_{0,\frac{1}{8}},[w_{|[0,t_0]}]_{0,\frac{1}{8}})\\ & \le Ch^2. \numberthis \label{majvw}
\end{align*}
\textcolor{black}{With that in mind,} we can \textcolor{black}{now} end the proof of Theorem \ref{thp}. Let us take $h_0\in (0,1)$ such that $\textcolor{black}{C}h_0^2<h_0$. Then, considering for $h\in (0,h_0)$
 \begin{align*}&\kappa=\inf\left\{t>0, \, \textcolor{black}{\|(v,w)\|_{\Xi,t}}> h \right\}\wedge T\end{align*}
we get
\begin{align*}&\{\kappa<T\}\cap \{\forall \tau\in \mathcal{T}, \, \forall k\le n_{\tau}, \, \sup_{s\le T} \|\Pi_k\tau(s)\|_{\mathcal{C}^{\alpha_{\tau}}}\le h^{k}\} \\& \ \ \ \ \ \ \ \ \cap \{[\RSIW]_{0,\frac{1}{8}}\le \textcolor{black}{h^3(1\wedge T^{\frac{\lambda}{2}})^{-1}}\}\textcolor{black}{\cap \{\sup_{s\le T}\|\RSV(s)\|_{\mathcal{C}^{-1-\frac{\varepsilon}{3}}}\le h^2(1\wedge T^{\frac{\lambda}{4}})\}}=\varnothing.\end{align*}
\textcolor{black}{Indeed, let us assume by contradiction that there is an $\omega$ in this intersection. Then, we have for this $\omega$ that $\kappa<T$, and since $t\mapsto \|(v,w)\|_{\Xi,t}$ is continuous (see Lemma \ref{cont}) and $\|(v,w)\|_{\Xi,0}=0$, it implies that $\|(v,w)\|_{\Xi,\kappa}=h$. We can assume without loss of generality that $[v_{|[0,\kappa]}]_{0,\frac{1}{8}}=h$, but since \eqref{hyp1}, \eqref{hyp2} and \eqref{hyp3} hold for $t_0=\kappa(\omega)$, we can invoke \eqref{majvw} to get that that $h=[v_{|[0,\kappa]}]_{0,\frac{1}{8}}\le Ch^2<h$ and we have a contradiction.}\\

Therefore, using Theorem \ref{cs} and Corollary \ref{chc}, we get that
\begin{align*}
    &\mathbb{P}(\kappa<T) \\ &\le \mathbb{P}(\exists (\tau,k)\in (\mathcal{T},\llbracket 1,n_{\tau} \rrbracket), \, \sup_{s\in [0,T]} \|\Pi_k\tau(s)\|_{\mathcal{C}^{\alpha_{\tau}}}>h^{k}) \\
    & \ \ \ \ \ \ + \mathbb{P}([\RSIW]_{0,\frac{1}{8}}>\textcolor{black}{h^3(1\wedge T^{\frac{\lambda}{2}})^{-1}})\textcolor{black}{+\mathbb{P}(\sup_{s\in[0,T]}\|\RSV(s)\|_{\mathcal{C}^{-1-\frac{\varepsilon}{3}}}>h^2(1\wedge T^{\frac{\lambda}{4}}))} \\
    &\le\sum_{\tau\in \mathcal{T}}\sum_{k=1}^{n_{\tau}}\mathbb{P}(\sup_{s\in [0,T]} \|\Pi_k \tau(s)\|_{\mathcal{C}^{\alpha_{\tau}}}>h^{k}) \\
    & \ \ \ \ \ \ + \mathbb{P}([\RSIW]_{0,\frac{1}{8}}>\textcolor{black}{h^3(1\wedge T^{\frac{\lambda}{6}}})^{-3})\textcolor{black}{+\mathbb{P}(\sup_{s\in[0,T]}\|\RSV(s)\|_{\mathcal{C}^{-1-\frac{\varepsilon}{3}}}>h^2(1\wedge T^{\frac{\lambda}{4}}))} \\
    &\le 5\text{Card}(\mathcal{T}) D'\exp(-\frac{C'}{\max(T^{\frac{\lambda}{5}},T^{\lambda})}\frac{h^2}{\sigma^2})+m_{\RSIW}'\exp(-\frac{\ell_{\RSIW}'}{\textcolor{black}{1\wedge T^{\frac{\lambda}{3}}}} \frac{h^2}{\sigma^2}) \\
    & \ \ \ \ \ \ \textcolor{black}{+d_{\RSV}\exp(-\frac{c_{\RSV}}{T^{\frac{\lambda}{2}}}(1\wedge T^{\frac{\lambda}{4}})\frac{h^2}{\sigma^2})} \\
    &\le 5\text{Card}(\mathcal{T}) D'\exp(-\frac{C'}{\max(T^{\frac{\lambda}{5}},T^{\lambda})}\frac{h^2}{\sigma^2})+m_{\RSIW}'\exp(-\frac{\ell_{\RSIW}'}{\textcolor{black}{T^{\frac{\lambda}{3}}}} \frac{h^2}{\sigma^2}) \\
    & \ \ \ \ \ \ \textcolor{black}{+d_{\RSV}\exp(-\frac{c_{\RSV}}{\max(T^{\frac{\lambda}{4}},T^{\frac{\lambda}{2}})}\frac{h^2}{\sigma^2})}
\end{align*}
since $T^{\frac{\lambda}{k}}\le \max(T^{\frac{\lambda}{5}},T^{\lambda})$. Here we may take $D'=\sup_{\tau\in \mathcal{T}} d_{\tau}(\lambda,\textcolor{black}{a_{+},a_{-}})$ et $C'=\inf_{\tau\in \mathcal{T}} c_{\tau}(\lambda,\textcolor{black}{a_{+},a_{-}})$. We then take $D=5\text{Card}(\mathcal{T}) D'+m_{\RSIW}'\textcolor{black}{+d_{\RSV}}$ and $C=\min(C',\ell_{\RSIW}',\textcolor{black}{c_{\RSV}})$ and Theorem \ref{thp} is proven.

\subsubsection{\textcolor{black}{Proof of Corollary \ref{corp}}}
\textcolor{black}{We want to control $\sup_{t\in [0,T]}\|(\varphi-\overline{\phi})(t)\|_{\mathcal{C}^{-\frac{1}{2}-\varepsilon}}$. We recall that}
$$\textcolor{black}{\varphi(t)-\overline{\phi}(t)=\RSI(t)-\RSIW(t)+3I(\RSWW)(t)+(v+w)(t)}$$
\textcolor{black}{We now consider $h\in (0,h_0)$, where $h_0$ is given by Theorem \ref{thp}. We have then}
\begin{align*}
    &\mathbb{P}(\sup_{t\in [0,T]}\|(\varphi-\overline{\phi})(t)\|_{\mathcal{C}^{-\frac{1}{2}-\varepsilon}}>h) \\ &\le \mathbb{P}(\sup_{t\in[0,T]} \|\RSI(t)\|_{\mathcal{C}^{-\frac{1}{2}-\varepsilon}}>\frac{h}{7})+\mathbb{P}(\sup_{t\in[0,T]} \|\RSIW(t)\|_{\mathcal{C}^{-\frac{1}{2}-\varepsilon}}>\frac{h}{7}) \\ & \ \ \ \ +\mathbb{P}(\sup_{t\in[0,T]} 3\|I(\RSWW(t))\|_{\mathcal{C}^{-\frac{1}{2}-\varepsilon}}>\frac{h}{7})\\ 
    & \ \ \ \  \ \ \ \ +\mathbb{P}(\sup_{t\in[0,T]} \|v(t)\|_{\mathcal{C}^{-\frac{1}{2}-\varepsilon}}>\frac{h}{7})+\mathbb{P}(\sup_{t\in[0,T]} \|w(t)\|_{\mathcal{C}^{-\frac{1}{2}-\varepsilon}}>\frac{h}{7})
\end{align*}
\textcolor{black}{We want to use Theorem \ref{cs} and Theorem \ref{thp} to deal with all of those terms, but we notice that $I(\RSWW)$ is not in $\mathcal{T}$ and we cannot therefore use directly Theorem \ref{cs} on it. However, thanks to Schauder's estimate we have}
$\|I(\RSWW)(t)\|_{\mathcal{C}^{-\frac{1}{2}-\varepsilon}}\le \|I(\RSWW)(t)\|_{\mathcal{C}^{\frac{3}{2}-\textcolor{black}{2}\varepsilon}}\le K \sup_{s\in [0,t]} \|\RSWW(s)\|_{\mathcal{C}^{-\frac{1}{2}-\varepsilon}},$
\textcolor{black}{with $K$ independent of time. We now use that $h^5\le h^3\le h$ and the chaos decomposition $\RSWW(t)\in\mathcal{H}_1\bigoplus\mathcal{H}_3\bigoplus \mathcal{H}_5$}
\begin{align*}
    &\mathbb{P}(\sup_{t\in[0,T]} \|\RSWW(t)\|_{\mathcal{C}^{-\frac{1}{2}-\varepsilon}}>\frac{h}{7}) \\
    &\le \mathbb{P}(\sup_{t\in[0,T]} \|\Pi_1\RSWW(t)\|_{\mathcal{C}^{-\frac{1}{2}-\varepsilon}}>\frac{h}{21})+\mathbb{P}(\sup_{t\in[0,T]} \|\Pi_3\RSWW(t)\|_{\mathcal{C}^{-\frac{1}{2}-\varepsilon}}>\frac{h}{21}) \\ & \ \ \ \ \ \ +\mathbb{P}(\sup_{t\in[0,T]} \|\Pi_5\RSWW(t)\|_{\mathcal{C}^{-\frac{1}{2}-\varepsilon}}>\frac{h}{21}) \\
    &\le \mathbb{P}(\sup_{t\in[0,T]} \|\Pi_1\RSWW(t)\|_{\mathcal{C}^{-\frac{1}{2}-\varepsilon}}>\frac{h}{21})+\mathbb{P}(\sup_{t\in[0,T]} \|\Pi_3\RSWW(t)\|_{\mathcal{C}^{-\frac{1}{2}-\varepsilon}}>\frac{h^3}{21}) \\ & \ \ \ \ \ \ +\mathbb{P}(\sup_{t\in[0,T]} \|\Pi_5\RSWW(t)\|_{\mathcal{C}^{-\frac{1}{2}-\varepsilon}}>\frac{h^5}{21}) \\
    &\le 3d_{\RSWW}\exp(-\frac{c_{\RSWW}}{441\max(T^\frac{\lambda}{5},T^{\lambda})}\frac{h^2}{\sigma^2})
\end{align*}
\textcolor{black}{Finally, we have}
\begin{align*}
    &\mathbb{P}(\sup_{t\in [0,T]}\|(\varphi-\overline{\phi})(t)\|_{\mathcal{C}^{-\frac{1}{2}-\varepsilon}}>h) \\ &\le \mathbb{P}(\sup_{t\in[0,T]} \|\RSI(t)\|_{\mathcal{C}^{-\frac{1}{2}-\varepsilon}}>\frac{h}{7})+\mathbb{P}(\sup_{t\in[0,T]} \|\RSIW(t)\|_{\mathcal{C}^{\frac{1}{2}-\varepsilon}}>\frac{h^3}{7}) \\ 
    & \ \ \ \ +\mathbb{P}(\sup_{t\in[0,T]} \|\RSWW(t)\|_{\mathcal{C}^{-\frac{1}{2}-\varepsilon}}>\frac{h}{7K})\\
    & \ \ \ \ \ \ \ \  +\mathbb{P}(\sup_{t\in[0,T]} \|v(t)\|_{\mathcal{C}^{1-2\varepsilon}}>\frac{h}{7})+\mathbb{P}(\sup_{t\in[0,T]} \|w(t)\|_{\mathcal{C}^{\frac{3}{2}-2\varepsilon}}>\frac{h}{7}) \\
    &\le d_{\RSI}\exp(-\frac{c_{\RSI}}{49 T^{\lambda}}\frac{h^2}{\sigma^2})+d_{\RSIW}\exp(-\frac{c_{\RSIW}}{7^{\frac{2}{3}} T^{\frac{\lambda}{3}}}\frac{h^2}{\sigma^2}) \\
    & \ \ \ \ \ +3d_{\RSWW}\exp(-\frac{c_{\RSWW}}{441\max(T^\frac{\lambda}{5},T^{\lambda})K^2}\frac{h^2}{\sigma^2}) \\
    & \ \ \ \ \ \ \ \ \ \ +2D\exp(-\frac{C}{49\max(T^{\frac{\lambda}{5}},T^{\lambda})}\frac{h^2}{\sigma^2}) \\
    &\le D'\exp(-\frac{C'}{\max(T^{\frac{\lambda}{5}},T^{\lambda})}\frac{h^2}{\sigma^2})
\end{align*}
\textcolor{black}{where $C'$ and $D'$ are independent of $T$. We therefore have the result.}
\newpage

\setcounter{section}{0}
\renewcommand\thesection{\Alph{section}}
\section{Technical tools}
\textcolor{black}{The content of this section can essentially be found in \cite{MW} and \cite{MWX}, with some improvements derived from \cite{H}.}
\subsection{Decomposition into Paley-Littlewood blocks \label{PL}}
There exists $\tilde{\chi},\chi\in \mathcal{C}_c^{\infty}(\R^d)$ taking values in $[0,1]$ with
$$\text{Supp} \ \tilde{\chi} \subset B(0,\frac{4}{3}), \ \ \ \ \ \ \ \ \text{Supp} \ \chi \subset B(0,\frac{8}{3})\setminus B(0,\frac{3}{4})$$
and such that
$$\tilde{\chi}(\zeta)+\sum_{k=0}^{\infty} \chi\left(\frac{\zeta}{2^k}\right)=1, \ \ \ \ \ \forall \zeta \in \R^d.$$
We furthermore assume that $\tilde{\chi}$ and $\chi$ are radially symmetric. We write
$$\chi_{-1}=\tilde{\chi}, \ \ \ \ \ \chi_k(\cdot):=\chi\left(\frac{\cdot}{2^k}\right) \ \ \ k\ge 0.$$
These objects allow us to define $\|\cdot\|_{\mathcal{C}^{\alpha}}$ for all $\alpha\in \R$ in a way that is consistent with the common definition for $\alpha\in (0,1)$. Indeed, \textcolor{black}{denoting $\mathcal{F}$ the Fourier transform, we consider} for $f\in \mathcal{C}^{\infty}(\mathbb{T}^d)$ and $k\ge -1$
$$\delta_k f :=\mathcal{F}^{-1}(\chi_k \hat{f}),$$
\textcolor{black}{and} we define the norm of $\|\cdot\|_{\mathcal{C}^{\alpha}}$ by
$$\|f\|_{\mathcal{C}^{\alpha}}:=\sup_{k\ge -1} 2^{\alpha k} \|\delta_k f\|_{L^{\infty}}.$$
This quantity is finite for all $f\in \mathcal{C}^{\infty}(\mathbb{T}^d)$, and the space $\mathcal{C}^{\alpha}(\mathbb{T}^d)$ is therefore defined as the completion of $\mathcal{C}^{\infty}$ for this norm. 
\begin{remark}
    We do not have necessarily that $\|f\|_{\mathcal{C}^{\alpha}}<+\infty$ implies $f\in \mathcal{C}^{\alpha}$ but we can verify that if a distribution $f$ satisfies $\|f\|_{\mathcal{C}^{\alpha}}<+\infty$, then $f\in \mathcal{C}^{\beta}$ for all $\beta<\alpha$.
\end{remark}
We now state one of the most crucial Lemmas of the theory of Besov spaces. It corresponds to \cite[Lemma 2.\textcolor{black}{1}]{H}, we will here only state the first part.
\begin{lemma}(Bernstein's Lemma)
    Let $B$ be the unit ball. There exists a constant $C$ such that for any $n\ge 0$, any couple $(p,q)\in [1,+\infty]^2$ with $q\ge p\ge 1$ and any function $u$ of $L^p$ we have
    $$\text{Supp} \ \hat{u}\subset \lambda B \  \Rightarrow \ \sup_{|\alpha|=n}\|\partial^{\alpha} u\|_{L^q}\le C^{n+1}\lambda^{n+d(\frac{1}{p}-\frac{1}{q})}\|u\|_{L^p}.$$
\end{lemma}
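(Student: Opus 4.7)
The plan is to realize $\partial^\alpha u$ as a convolution with an explicit Schwartz kernel and then apply Young's inequality. First I would fix once and for all a radial cutoff $\phi\in \mathcal{C}_c^{\infty}(\mathbb{R}^d)$ with $\phi\equiv 1$ on $B$ and supported in $2B$. Since $\hat u$ is supported in $\lambda B$, the identity $\hat u=\phi(\cdot/\lambda)\hat u$ holds in the sense of distributions, so for every multi-index $\alpha$ with $|\alpha|=n$ we can write
\[
\partial^{\alpha} u \;=\; \mathcal{F}^{-1}\!\bigl((i\zeta)^{\alpha}\phi(\zeta/\lambda)\bigr) * u \;=:\; K_\lambda^\alpha * u.
\]
A standard rescaling gives $K_\lambda^\alpha(x)=\lambda^{n+d}\,K_1^\alpha(\lambda x)$, hence $\|K_\lambda^\alpha\|_{L^r}=\lambda^{n+d(1-1/r)}\|K_1^\alpha\|_{L^r}$ for any $r\in[1,\infty]$.

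Next I would apply Young's convolution inequality with the exponent $r$ determined by $1+\tfrac1q=\tfrac1r+\tfrac1p$, which is admissible precisely because $q\ge p$. This yields
\[
\|\partial^{\alpha} u\|_{L^q} \le \|K_\lambda^\alpha\|_{L^r}\,\|u\|_{L^p}
 \,=\, \lambda^{n+d(1/p-1/q)}\|K_1^\alpha\|_{L^r}\,\|u\|_{L^p},
\]
so the only remaining task is to produce a bound $\|K_1^\alpha\|_{L^r}\le C^{n+1}$ that is uniform in $r$. To achieve this I would estimate $K_1^\alpha$ in $L^\infty$ and in $L^1$ separately, and then interpolate. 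For the $L^\infty$ bound one has $|K_1^\alpha(x)|\le \int |\zeta|^n|\phi(\zeta)|\,\mathrm d\zeta$, and since $\mathrm{Supp}\,\phi\subset 2B$ this is at most $C^{n+1}$ with a constant depending only on $\phi$ and $d$. For the $L^1$ bound I would use the weight $(1+|x|^2)^d$ together with the identity $(1-\Delta_\zeta)e^{ix\cdot\zeta}=(1+|x|^2)e^{ix\cdot\zeta}$ and integrate by parts $d$ times; this transfers the weight onto $(i\zeta)^\alpha\phi(\zeta)$, and using Leibniz and the compact support of $\phi$ one sees that $(1-\Delta_\zeta)^d\bigl[(i\zeta)^\alpha\phi(\zeta)\bigr]$ is bounded pointwise by $C^{n+1}$ on $2B$. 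Since $(1+|x|^2)^{-d}$ is integrable, this produces $\|K_1^\alpha\|_{L^1}\le C^{n+1}$, and Riesz--Thorin (or simple interpolation $\|f\|_{L^r}\le \|f\|_{L^1}^{1/r}\|f\|_{L^\infty}^{1-1/r}$) gives the same bound in every $L^r$, $r\in[1,\infty]$.

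Combining the Young bound with this kernel estimate and taking the supremum over $|\alpha|=n$ yields the announced inequality, with a constant $C$ depending only on $d$ and the choice of $\phi$. The only step that requires some care is the $L^1$ control of $K_1^\alpha$: one must keep track of how the factors $|\zeta|^n$ interact with the repeated $\Delta_\zeta$, which is where the exponential-in-$n$ constant $C^{n+1}$ is produced. Everything else is a routine application of Young's inequality and scaling.
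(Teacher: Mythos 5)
The paper itself does not prove this lemma: it states it and refers to \cite{H} (Lemma~2.2 of Bahouri--Chemin--Danchin), so strictly speaking there is no ``paper's proof'' to compare against. What you have written out is the standard argument from that reference — represent $\partial^{\alpha}u$ as convolution with $K_{\lambda}^{\alpha}=\mathcal{F}^{-1}\bigl((i\zeta)^{\alpha}\phi(\zeta/\lambda)\bigr)$ for a fixed dilation-stable cutoff $\phi\equiv 1$ on $B$, use the exact scaling identity $K_{\lambda}^{\alpha}(x)=\lambda^{n+d}K_{1}^{\alpha}(\lambda x)$ to extract the $\lambda^{n+d(1/p-1/q)}$ factor, and close with Young's inequality at the exponent $1+1/q=1/r+1/p$ (admissible precisely because $q\ge p$). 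That is correct and is exactly the route the cited reference takes.

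The only point worth flagging in the kernel estimate is the size of the constant. When you integrate by parts $d$ times against the weight $(1+|x|^{2})^{d}$ and apply Leibniz to $(1-\Delta_{\zeta})^{d}\bigl[(i\zeta)^{\alpha}\phi(\zeta)\bigr]$, the derivatives falling on $\zeta^{\alpha}$ produce factors of the form $\alpha!/(\alpha-\gamma)!\le n^{|\gamma|}\le n^{2d}$, not a pure geometric factor. This is harmless — $n^{2d}2^{n}\le C^{n+1}$ for a suitable $C=C(d)$ — but it is worth saying explicitly, since your phrasing ``bounded pointwise by $C^{n+1}$'' quietly absorbs this polynomial-in-$n$ bookkeeping. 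With that small clarification the argument is complete, and (as the paper's remark notes) it transfers to $\mathbb{T}^{d}$ verbatim since the Fourier support condition there is a condition on $\mathbb{Z}^{d}\subset\mathbb{R}^{d}$ and the convolution estimate is still Young on the torus.
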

\begin{remark}
    The theorem is generally written for $L^p=L^p(\R^d)$, but since it is a direct consequence of Young's inequality, and since the Fourier transform maps functions on $\mathbb{T}^d$ to functions on $\Z^d\subset \R^d$, it also works for $L^p=L^p(\mathbb{T}^d)$.
\end{remark}
In the setting of Paley-Littlewood theory, we take $B=B(0,\frac{8}{3})$, $\lambda=2^{\textcolor{black}{k}}$ and $q=+\infty$ and we get for $n=0$ that
\begin{equation}\|\delta_k u\|_{L^{\infty}}\le C 2^{\frac{dk}{p}}\|\delta_k u\|_{L^p}\label{bernstein}\end{equation}
for all $k\ge -1$, $p\ge 1$ and $u\in L^1(\mathbb{T}^d)$. This inequality is key to prove the following boundedness criterion:
\begin{proposition}\label{critere}
    Let $\beta<\alpha$. There exists $C_0$ such that for $p>\frac{d}{\alpha-\beta}+1$ we have for every random distribution $f$ on $\mathbb{T}^d$ that
    $$\mathbb{E}(\|f\|_{\mathcal{C}^{\beta}}^p)\le C_0^p \sup_{k\ge -1} 2^{\alpha k p}\mathbb{E}[\|\delta_k f\|_{L^p}^p]$$
\end{proposition}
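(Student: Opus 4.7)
The plan is to reduce the $\mathcal{C}^\beta$-norm to an $\ell^p$-weighted sum of $L^p$-block norms, exploiting three standard moves: replacing the supremum by a sum (cheap thanks to the $p$-th power), upgrading $L^\infty$ to $L^p$ via Bernstein's inequality, and absorbing the mismatch between $\beta$ and $\alpha$ into a geometric series whose ratio is strictly less than $1$ under the hypothesis $p > d/(\alpha-\beta) + 1$.

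First, since all terms are non-negative and raising a supremum to the $p$-th power commutes with the $\sup$,
\[
\|f\|_{\mathcal{C}^\beta}^p \;=\; \sup_{k\ge -1} 2^{\beta k p} \|\delta_k f\|_{L^\infty}^p \;\le\; \sum_{k\ge -1} 2^{\beta k p} \|\delta_k f\|_{L^\infty}^p.
\]
Next, I would apply Bernstein's inequality \eqref{bernstein} to each block: since $\widehat{\delta_k f}$ is supported in a ball of radius $\asymp 2^k$, we have $\|\delta_k f\|_{L^\infty} \le C \, 2^{dk/p} \|\delta_k f\|_{L^p}$ uniformly in $k\ge -1$. Taking the $p$-th power, substituting, and inserting/removing the weight $2^{\alpha k p}$ inside the sum yields
\[
\|f\|_{\mathcal{C}^\beta}^p \;\le\; C^p \sum_{k\ge -1} 2^{(\beta - \alpha) k p + dk}\, 2^{\alpha k p}\|\delta_k f\|_{L^p}^p.
\]

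Taking expectations and pulling the sup outside of the sum,
\[
\mathbb{E}\bigl[\|f\|_{\mathcal{C}^\beta}^p\bigr] \;\le\; C^p\, S_p\, \sup_{k\ge -1} 2^{\alpha k p}\, \mathbb{E}\bigl[\|\delta_k f\|_{L^p}^p\bigr],
\qquad S_p := \sum_{k\ge -1} 2^{((\beta-\alpha)p + d)k}.
\]
The key point is that the condition $p > \tfrac{d}{\alpha - \beta} + 1$ gives $(\beta-\alpha)p + d \le \beta - \alpha < 0$, so $S_p$ is a geometric series with ratio $2^{(\beta-\alpha)p + d} \le 2^{\beta-\alpha} < 1$. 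Consequently $S_p$ is bounded by a constant $S_0 := 2^{\alpha-\beta}/(1 - 2^{-(\alpha-\beta)})$ depending only on $\alpha - \beta$, uniformly in $p$.

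Finally, setting $C_0 := C \cdot \max(S_0,1)$ (so that $C^p S_p \le C_0^p$ for every admissible $p$, since $\max(S_0,1)^{1/p} \le \max(S_0,1)$ when $p \ge 1$) gives the claimed inequality. The only non-routine step is verifying that the threshold $p_0 = d/(\alpha-\beta) + 1$ comes out with the right strict inequality, but this is built into the geometric-series computation above; no delicate estimate is needed.
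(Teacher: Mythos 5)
Your approach is the same as the paper's: bound the sup by a sum, apply Bernstein to trade $L^\infty$ for $L^p$ at the cost of a factor $2^{dk/p}$, and control the resulting geometric series. The structure is sound and leads to the correct conclusion.

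There is, however, one genuine error in the intermediate claim. You assert that $S_p = \sum_{k\ge -1} 2^{((\beta-\alpha)p+d)k}$ is bounded uniformly in $p$ by $S_0 = 2^{\alpha-\beta}/(1-2^{-(\alpha-\beta)})$. This is false: the $k=-1$ term alone equals $2^{-((\beta-\alpha)p+d)} = 2^{(\alpha-\beta)p - d}$, which tends to $+\infty$ as $p\to\infty$ since $\alpha > \beta$. Your $S_0$ is actually the value $\frac{1}{r(1-r)}$ evaluated at $r=2^{\beta-\alpha}$, the \emph{largest} admissible ratio (corresponding to the threshold $p = d/(\alpha-\beta)+1$), but the sum grows as $r\to 0$, i.e.\ as $p\to\infty$, because of the $k=-1$ contribution. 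So the final step ``$C^p S_p \le C_0^p$ with $C_0 = C\max(S_0,1)$'' does not follow from what you wrote.

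The repair is easy and does not change the approach: write
\[
S_p \;=\; \frac{1}{r(1-r)} \;\le\; 2^{(\alpha-\beta)p - d}\cdot\frac{1}{1-2^{\beta-\alpha}},
\qquad r := 2^{(\beta-\alpha)p+d},
\]
using $r<2^{\beta-\alpha}$. Then $C^p S_p \le \bigl(C\cdot 2^{\alpha-\beta}\bigr)^p \cdot \frac{2^{-d}}{1-2^{\beta-\alpha}}$, and you may take $C_0 := C\cdot 2^{\alpha-\beta}\cdot\max\bigl(1,\tfrac{2^{-d}}{1-2^{\beta-\alpha}}\bigr)$, which is independent of $p$. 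In other words, the exponential growth of $S_p$ is harmless because it can be absorbed into the $C_0^p$ in the statement; but you must say this explicitly rather than claim $S_p$ itself is bounded. (The paper's phrasing ``the sum can be bounded uniformly in $p$'' glosses over this same point; what is bounded uniformly is not $S_p$ but $S_p\,2^{-(\alpha-\beta)p}$.)
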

\begin{proof}
    By definition of the $\mathcal{C}^{\beta}$ norm and then using \eqref{bernstein} we have 
    $$\|f\|_{\mathcal{C}^\beta}^p=\sup_{k\ge -1} 2^{\beta k p} \|\delta_k\|_{L^{\infty}}^p\le C^p \sup_{k\ge -1} 2^{ k (\beta p+d)}\|\delta_k f\|_{L^p}^p$$
    To take the expectation of $\|\delta_k f\|_{L^p}^p$ directly, we enlarge the supremum on the right-hand side to a sum and we get 
    $$\mathbb{E} \|f\|_{\mathcal{C}^{\beta}}^p \le C^p \sum_{k\ge -1} 2^{k(\beta p+d)}\mathbb{E} \|\delta_k f\|_{L^p}^p=C^p \sum_{k\ge -1} 2^{kp(\beta+\frac{d}{p}-\alpha)}2^{\alpha kp}\mathbb{E} \|\delta_k f\|_{L^p}^p.$$
    The sum $\sum_{k\ge -1} 2^{kp(\beta+\frac{d}{p}-\alpha)}$ can be bounded uniformly in $p>\frac{d}{\alpha-\beta}+1$. So taking $C_0$ slightly larger than $C$ we have the result.
\end{proof}
We now state Schauder's estimates which are extremely useful when we are dealing with the heat semigroup.
\begin{proposition}[Proposition A.13 of \cite{MW}]\label{es}
Let $\alpha,\beta\in \R$:
\begin{itemize}
    \item If $\alpha\ge \beta$ there exists $C>0$ such that, uniformly in $t>0$ and $f\in \mathcal{C}^{\beta}$, we have
    $$\|e^{t \Delta} f\|_{\mathcal{C}^{\alpha}}\le C t^{\frac{\beta-\alpha}{2}}\|f\|_{\mathcal{C}^{\beta}}$$
    \item If $0\le\beta-\alpha\le 2$ there exists $C>0$ such that uniformly in $t\ge 0$ and $f\in \mathcal{C}^{\beta}$ we have
    $$\|(1-e^{t\Delta})f\|_{\mathcal{C}^{\alpha}}\le C t^{\frac{\beta-\alpha}{2}} \|f\|_{\mathcal{C}^{\beta}}$$
\end{itemize}

\end{proposition}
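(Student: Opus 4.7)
The plan is to exploit that $e^{t\Delta}$ is the Fourier multiplier $\zeta \mapsto e^{-t|\zeta|^2}$ and commutes with every Paley--Littlewood projector $\delta_k$; since $\delta_k f$ is frequency-localized in $|\zeta| \sim 2^k$ for $k \ge 0$, the multiplier acts essentially as the scalar $e^{-c t 2^{2k}}$, and after the overall factor $2^{\alpha k}$ both estimates reduce to a one-variable extremization in the quantity $x = t \cdot 2^{2k}$.

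For the first inequality, I would start from the Bernstein-type convolution bound
\[
\|e^{t\Delta} \delta_k f\|_{L^\infty} \le C_1 e^{-c t 2^{2k}} \|\delta_k f\|_{L^\infty} \qquad (k \ge 0),
\]
obtained by writing $e^{t\Delta} \delta_k f = K_{t,k} * \delta_k f$ with $\widehat{K_{t,k}}(\zeta) = e^{-t|\zeta|^2} \widetilde{\chi}_k(\zeta)$ for a smooth cutoff $\widetilde{\chi}_k$ equal to $1$ on the support of $\chi_k$; the change of variables $\zeta = 2^k \eta$ reduces the $L^1$-norm of $K_{t,k}$ to that of a fixed profile evaluated at $t \cdot 2^{2k}$, giving the exponential decay. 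The low-frequency block $k=-1$ is handled by the trivial bound $\|e^{t\Delta} \delta_{-1} f\|_{L^\infty} \le \|\delta_{-1} f\|_{L^\infty}$ (mass-preservation of the heat kernel on $\mathbb{T}^d$). Multiplying by $2^{\alpha k}$ yields
\[
2^{\alpha k} \|\delta_k e^{t\Delta} f\|_{L^\infty} \le \bigl(2^{(\alpha-\beta) k} e^{-c t 2^{2k}}\bigr)\, 2^{\beta k} \|\delta_k f\|_{L^\infty},
\]
and, since $\alpha \ge \beta$, the function $x \mapsto x^{(\alpha-\beta)/2} e^{-c x}$ is bounded on $\R_+$, giving $\sup_{k\ge 0} 2^{(\alpha-\beta)k} e^{-c t 2^{2k}} \le C_2\, t^{(\beta-\alpha)/2}$. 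Taking the supremum in $k$ concludes this case.

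For the second inequality, I replace the symbol $e^{-t|\zeta|^2}$ by $1 - e^{-t|\zeta|^2}$ and use the elementary interpolation $|1-e^{-x}| \le x \wedge 1 \le x^{(\beta-\alpha)/2}$ for $x \ge 0$, which is permitted because the hypothesis $0 \le \beta-\alpha \le 2$ places the exponent in $[0,1]$. The analogous convolution estimate then gives
\[
\|(\text{Id} - e^{t\Delta})\delta_k f\|_{L^\infty} \le C\, (t \cdot 2^{2k})^{(\beta-\alpha)/2}\, \|\delta_k f\|_{L^\infty}
\]
for all $k \ge -1$ (the $k=-1$ case follows directly from $|\zeta| \le 4/3$ on the support of $\chi_{-1}$). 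Multiplying by $2^{\alpha k}$ produces $C\, t^{(\beta-\alpha)/2}\, 2^{\beta k}\, \|\delta_k f\|_{L^\infty}$, and the supremum over $k$ yields the claim.

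The only substantive analytic input is the $L^1$-bound on the Littlewood--Paley-localized heat kernel, which is a standard Bernstein-type result and transfers from $\R^d$ to $\mathbb{T}^d$ via periodization. No serious obstacle is expected: the two hypotheses $\alpha \ge \beta$ and $\beta-\alpha \le 2$ enter precisely where one would want them, namely to make the extremization $\sup_k x^{(\alpha-\beta)/2} e^{-cx}$ finite in the first case and to allow the power-function interpolation of $|1-e^{-x}|$ in the second.
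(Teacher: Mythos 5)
The paper offers no proof of Proposition~\ref{es}: it simply cites Proposition A.13 of \cite{MW}. The Littlewood--Paley/Bernstein argument you describe is the standard proof of that result and is the right approach.

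There is, however, one genuine discrepancy to flag in the first estimate. Your blocks $k\ge 0$ do yield $\sup_{k\ge 0} 2^{(\alpha-\beta)k}e^{-ct2^{2k}}\le C\, t^{(\beta-\alpha)/2}$, but the block $k=-1$ only gives a $t$-independent constant: on the torus the heat semigroup leaves the zero Fourier mode untouched, so no decay in $t$ is available there. Taking the supremum over all $k\ge -1$ therefore produces $\|e^{t\Delta}f\|_{\mathcal{C}^{\alpha}}\le C\,(1\wedge t)^{(\beta-\alpha)/2}\|f\|_{\mathcal{C}^{\beta}}$, which is strictly weaker than the stated $C\,t^{(\beta-\alpha)/2}$ once $\alpha>\beta$ and $t>1$. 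And indeed the stated form cannot hold uniformly in $t>0$: for $f\equiv 1$ one has $e^{t\Delta}f=f$ and all $\mathcal{C}^{\gamma}$-norms are comparable constants, so the left-hand side does not decay as $t\to\infty$. This is not a gap in your argument but a typo in the paper's transcription: the actual Proposition A.13 of \cite{MW} carries the factor $(1\wedge t)^{(\beta-\alpha)/2}$, which is precisely what your proof establishes. (Since the paper always uses the estimate inside an integral weighted by $e^{-a_{+}(t-u)}$, the omission is harmless for its applications, but you should state which version your proof gives.)

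Your treatment of $(\text{Id}-e^{t\Delta})$ is correct as written: the interpolation $|1-e^{-x}|\le(x\wedge 1)\le x^{(\beta-\alpha)/2}$ uses the hypothesis $0\le\beta-\alpha\le 2$ exactly where needed, and here the block $k=-1$ does give the correct power of $t$ because $(\beta-\alpha)/2\ge 0$; the $L^{1}$-kernel bound via scaling $\zeta=2^{k}\eta$ transfers to $\mathbb{T}^{3}$ without difficulty.
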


\subsection{Paracontrolled calculus \label{PC}}
As we said above, the main tool of paracontrolled calculus is the decomposition of the standard product of functions into three different objects we will now describe explicitly:
$$fg=\sum_{k<l-1} \delta_k f \delta_l g+ \sum_{|k-l|\le 1} \delta_k f \delta_l g+\sum_{k>l+1} \delta_k f \delta_l g.$$
We write the first term $f\pl g$, the second one $f \pe g$ and the third one $f \pg g$. The paraproducts $f\pl g$ and $f \pg g=g\pl f$ are defined for all functions $f$ and $g$ with at least (negative) Hölder regularity. The resonant product $f\pe g$ is only defined for functions with "compensating" Hölder regularity, and is the reason why the standard product is often not defined. More precisely, both for the paraproducts and the resonant product we have the following mappings:
\begin{proposition}[Proposition A.7 of \cite{MW}]\label{ip}
    Let $\alpha,\beta\in \R$:
    \begin{itemize}
        \item If $\alpha+\beta>0$, then the mapping $(f,g)\mapsto f\pe g$ extends to a continuous bilinear map from $\mathcal{C}^{\alpha}\times \mathcal{C}^{\beta}$ to $\mathcal{C}^{\alpha+\beta}$.
        \item The mapping $(f,g)\mapsto f\pl g$ extends to a continuous bilinear map from $L^{\infty}\times \mathcal{C}^{\beta}$ to $\mathcal{C}^{\beta}$.
        \item If $\alpha<0$, then the mapping $(f,g)\mapsto f\pl g$ extends to a continuous bilinear map from $\mathcal{C}^{\alpha}\times \mathcal{C}^{\beta}$ to $\mathcal{C}^{\alpha+\beta}$. 
        \item If $\alpha<0<\beta$ and $\alpha+\beta>0$ then the mapping $(f,g)\mapsto fg$ extends to a bilinear map from $\mathcal{C}^{\alpha}\times \mathcal{C}^{\beta}\to \mathcal{C}^{\alpha}$.
    \end{itemize}
\end{proposition}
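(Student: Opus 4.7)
All four statements are proved by frequency-localization with Paley--Littlewood blocks together with Bernstein's lemma. The common scheme is: for each $j\ge -1$, identify which dyadic summands of the bilinear expression can possibly contribute to $\delta_j(\cdot)$, estimate each such summand in $L^\infty$ using the defining bound $\|\delta_k h\|_{L^\infty}\le 2^{-\gamma k}\|h\|_{\mathcal{C}^\gamma}$, then sum and recognize a $2^{-\gamma j}$ prefactor in front.

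For the resonant product, write $f\pe g = \sum_{|k-l|\le 1}\delta_k f\,\delta_l g$. The Fourier support of $\delta_k f\,\delta_l g$ lies in a ball of radius $C\,2^{\max(k,l)}$, so only summands with $\max(k,l)\ge j-c$ can contribute to $\delta_j(f\pe g)$. Bounding each summand by $\|\delta_k f\|_{L^\infty}\|\delta_l g\|_{L^\infty}\le 2^{-\alpha k}2^{-\beta l}\|f\|_{\mathcal{C}^\alpha}\|g\|_{\mathcal{C}^\beta}$ and summing the geometric series at its lower end, which converges precisely because $\alpha+\beta>0$, gives $\|\delta_j(f\pe g)\|_{L^\infty}\le C\,2^{-(\alpha+\beta)j}\|f\|_{\mathcal{C}^\alpha}\|g\|_{\mathcal{C}^\beta}$. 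For the paraproducts I would rewrite $f\pl g=\sum_l S_{l-2}f\,\delta_l g$ with $S_{l-2}f:=\sum_{k\le l-2}\delta_k f$; now the Fourier support of $S_{l-2}f\,\delta_l g$ is an annulus of scale $2^l$, so only $|l-j|\le c$ contributes to $\delta_j$. When $f\in L^\infty$ one simply uses $\|S_{l-2}f\|_{L^\infty}\le \|f\|_{L^\infty}$ and gets the second item. When only $f\in \mathcal{C}^\alpha$ with $\alpha<0$ one instead estimates $\|S_{l-2}f\|_{L^\infty}\le \sum_{k\le l-2}2^{-\alpha k}\|f\|_{\mathcal{C}^\alpha}\le C\,2^{-\alpha l}\|f\|_{\mathcal{C}^\alpha}$, and the geometric series at its upper end converges precisely because $-\alpha>0$; this produces the third item.

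The last statement is then a direct consequence of the decomposition $fg = f\pl g + f\pe g + g\pl f$ (Bony). Under the assumptions $\alpha<0<\beta$ and $\alpha+\beta>0$, the first term is in $\mathcal{C}^{\alpha+\beta}$ by the third item, the second is in $\mathcal{C}^{\alpha+\beta}$ by the first item, and $\beta>0$ gives $g\in L^\infty$ with $\|g\|_{L^\infty}\le C\|g\|_{\mathcal{C}^\beta}$ so the second item applied to $g\pl f$ places the third piece in $\mathcal{C}^\alpha$. Using the monotonicity $\mathcal{C}^{\alpha+\beta}\hookrightarrow \mathcal{C}^\alpha$, the sum lies in $\mathcal{C}^\alpha$ with the required bilinear bound.

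\textbf{Main obstacle.} The genuinely delicate point is not the a priori dyadic estimate but the word \emph{extends}: the series defining $\pl,\pe,\pg$ must be shown to converge in the target Hölder space whenever $(f,g)$ belongs to the stated product space, not merely for smooth inputs. I would handle this by truncating, i.e.\ working with $f_N:=S_N f$ and $g_N:=S_N g$, running the estimates above on the partial sums indexed by $k,l\le N$, and showing that these form a Cauchy sequence in the target $\mathcal{C}^{\gamma}$ via the very same geometric-sum bounds applied to tails. Density of smooth functions in $\mathcal{C}^\gamma$ (with the convention that $\mathcal{C}^\gamma$ is defined as the completion of smooth functions for the norm introduced in Appendix~\ref{PL}) then identifies the limit unambiguously as a continuous bilinear extension.
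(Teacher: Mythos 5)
The paper does not give its own proof of this proposition: it is quoted directly as Proposition A.7 of \cite{MW}, and \cite{MW} in turn points to the standard paradifferential literature. Your proof is the classical Bony argument via Paley--Littlewood frequency localization: the support considerations (ball of radius $C2^{\max(k,l)}$ for the resonant product, annulus at scale $2^l$ for the paraproduct), the geometric-series tail estimates whose convergence isolates exactly the hypotheses $\alpha+\beta>0$ and $\alpha<0$, and the Bony decomposition $fg=f\pl g+f\pe g+g\pl f$ for the final item are exactly the content of the cited reference. The truncation/density argument you append to justify the word \emph{extends} is also the standard one, and it is consistent with the paper's Appendix~\ref{PL} definition of $\mathcal{C}^\gamma$ as a completion. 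In short, the proposal is correct and is essentially the proof the paper is implicitly relying on by citation.
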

Now we will state two technical Lemmas that illustrate that the resonant product, the paraproduct\textcolor{black}{s} and the heat semigroup have powerful interactions with one another. We start with the commutator of $\pl$ and $\pe$.
\begin{proposition}[Proposition A.9 of \cite{MW}]\label{com}
    Let $\alpha<1$ and $\beta,\gamma\in \R$ such that $\beta+\gamma<0$ and $\alpha+\beta+\gamma>0$. Then the mapping
    $$[\pl,\pe]:(f,g,h)\longmapsto (f\pl g)\pe h-f(g\pe h)$$
    extends to a continuous trilinear map from $\mathcal{C}^{\alpha}\times \mathcal{C}^{\beta}\times \mathcal{C}^{\gamma}$ \textcolor{black}{to} $\mathcal{C}^{\alpha+\beta+\gamma}$.
\end{proposition}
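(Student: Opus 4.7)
The plan is to expand both $(f\pl g)\pe h$ and $f(g\pe h)$ in Paley--Littlewood blocks, isolate a clean ``diagonal'' contribution whose difference can be summed explicitly, and separately control a remainder coming from the non-commutativity of $\delta_k$ with multiplication by a low-frequency function. Write $S_j := \sum_{k\le j}\delta_k$, so that $f\pl g = \sum_l S_{l-2}f\cdot \delta_l g$ exactly, and set $\tilde{\delta}_k h := \delta_{k-1}h+\delta_k h+\delta_{k+1}h$.

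First, Fourier-support considerations show that $\delta_q(S_{l-2}f\cdot\delta_l g)$ vanishes unless $|l-q|$ is bounded by a fixed constant $c$ (since the product is localized in an annulus of scale $2^l$). This lets me reorganize
\begin{equation*}
(f\pl g)\pe h \;=\; \sum_k S_{k-2}f\cdot \delta_k g\cdot \tilde{\delta}_k h \;+\; R(f,g,h),
\end{equation*}
where $R$ gathers commutator corrections of the form $\delta_q(S_{l-2}f\cdot\delta_l g) - S_{l-2}f\cdot\delta_q\delta_l g$ together with a finite number of off-diagonal index shifts. Since $f(g\pe h) = \sum_k f\cdot\delta_k g\cdot\tilde{\delta}_k h$ is immediate, the main contribution to the commutator is
\begin{equation*}
D(f,g,h) \;:=\; -\sum_k (f-S_{k-2}f)\cdot\delta_k g\cdot\tilde{\delta}_k h \;=\; -\sum_k\sum_{i\ge k-1}\delta_i f\cdot\delta_k g\cdot\tilde{\delta}_k h.
\end{equation*}

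For $D$, each summand has Fourier support in a ball of radius $\lesssim 2^i$ (since $i\ge k-1$), so $\delta_q$ annihilates it unless $q\le i+c'$. Using $\|\delta_i f\|_{L^\infty}\lesssim 2^{-\alpha i}\|f\|_{\mathcal{C}^\alpha}$ and the analogous bounds for $g$ and $h$, this gives
\begin{equation*}
\|\delta_q D(f,g,h)\|_{L^\infty} \;\lesssim\; \sum_{i\ge q-c'}\sum_{k\le i+1} 2^{-\alpha i}\,2^{-(\beta+\gamma)k}\,\|f\|_{\mathcal{C}^\alpha}\|g\|_{\mathcal{C}^\beta}\|h\|_{\mathcal{C}^\gamma}.
\end{equation*}
The inner sum converges at its top end thanks to $\beta+\gamma<0$, producing a factor $\lesssim 2^{-(\beta+\gamma)i}$; the outer sum then converges at its bottom end thanks to $\alpha+\beta+\gamma>0$, giving the bound $\lesssim 2^{-(\alpha+\beta+\gamma)q}\|f\|_{\mathcal{C}^\alpha}\|g\|_{\mathcal{C}^\beta}\|h\|_{\mathcal{C}^\gamma}$, which is exactly the $\mathcal{C}^{\alpha+\beta+\gamma}$ estimate required.

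The main obstacle will be the remainder $R$. Writing $\delta_k$ as convolution against a smooth kernel $K_k$ with $\|K_k\|_{L^1}\lesssim 1$, one has
\begin{equation*}
\delta_k(S_{l-2}f\cdot\delta_l g)(x) - S_{l-2}f(x)\cdot\delta_k\delta_l g(x) = \int K_k(y)\bigl(S_{l-2}f(x-y)-S_{l-2}f(x)\bigr)\,\delta_l g(x-y)\,dy.
\end{equation*}
This is precisely where the hypothesis $\alpha<1$ is essential: it permits the H\"older estimate $|S_{l-2}f(x-y)-S_{l-2}f(x)|\lesssim |y|^\alpha\|f\|_{\mathcal{C}^\alpha}$ without needing to subtract Taylor terms involving derivatives of $f$. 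Combined with $\int|y|^\alpha|K_k(y)|\,dy\lesssim 2^{-\alpha k}$, this yields a per-summand pointwise bound $\lesssim 2^{-\alpha k}\|f\|_{\mathcal{C}^\alpha}\|\delta_l g\|_{L^\infty}$; multiplying by $\delta_m h$ with $m$ close to $k$, applying $\delta_{q'}$, and noting that the entire product is Fourier-supported at scale $2^k\sim 2^{q'}$, the same kind of geometric bookkeeping as above closes the estimate on $R$. Combining the bounds on $D$ and $R$ yields the continuity of $[\pl,\pe]$ from $\mathcal{C}^\alpha\times\mathcal{C}^\beta\times\mathcal{C}^\gamma$ to $\mathcal{C}^{\alpha+\beta+\gamma}$.
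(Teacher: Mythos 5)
The paper does not actually prove this statement: Proposition~\ref{com} is cited verbatim as Proposition~A.9 of \cite{MW}, which in turn traces back to the commutator lemma of Gubinelli--Imkeller--Perkowski \cite{GIP}. There is therefore no ``paper's own proof'' to compare against, and you have in effect reconstructed the original argument.

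Your reconstruction is essentially correct and follows the standard Bony/Littlewood--Paley strategy: exact reorganisation of $(f\pl g)\pe h$ and $f(g\pe h)$ into a common diagonal piece plus a commutator remainder; summability of the diagonal difference $D=-\sum_k\sum_{i\ge k-1}\delta_i f\,\delta_k g\,\tilde\delta_k h$ using $\beta+\gamma<0$ for the inner sum and $\alpha+\beta+\gamma>0$ for the outer geometric tail; and control of the remainder $R$ via the modulus-of-continuity estimate for $S_{l-2}f$ paired with the first moment of the kernel $K_k$. One small point worth making explicit: the bound $|S_{l-2}f(x-y)-S_{l-2}f(x)|\lesssim|y|^\alpha\|f\|_{\mathcal{C}^\alpha}$ that you attribute to $\alpha<1$ in fact requires $\alpha\in(0,1)$; the hypothesis $\alpha<1$ lets you avoid second-order Taylor corrections, but $\alpha>0$ is what makes the $|y|^\alpha$ form meaningful. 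Fortunately $\alpha>0$ is automatic here, since $\alpha+\beta+\gamma>0$ and $\beta+\gamma<0$ force $\alpha>-(\beta+\gamma)>0$; it would be worth stating this so the reader sees the hypotheses are genuinely being used. Aside from that, the only gaps are the ones you flag yourself (the ``finite off-diagonal shifts'' bookkeeping in $R$), which are routine but should be written out in a full proof.
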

\begin{remark} As said in Section \ref{resultats}, this result is quite remarkable because the resonant product $g\pe h$ is not supposed to be defined for $\beta+\gamma<0$.\end{remark}

Finally we consider the commutator of $e^{t\Delta}$ and $\pl$.
\begin{proposition}[Proposition A.16 of \cite{MW}]\label{come}
Let $\alpha<1, \, \beta\in \R, \, \gamma\ge \alpha+\beta$. For every $t\ge 0$ define
$$[e^{t\Delta},\pl]:(f,g)\mapsto e^{t\Delta}(f\pl g)-f \pl(e^{t\Delta}g).$$
There exists $C<+\infty$ such that, uniformly over $t>0$,
$$\|[e^{t\Delta},\pl](f,g)\|_{\mathcal{C}^{\gamma}}\le C t^{\frac{\alpha+\beta-\gamma}{2}}\|f\|_{\mathcal{C}^{\alpha}}\|g\|_{\mathcal{C}^{\beta}}.$$
\end{proposition}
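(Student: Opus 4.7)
The plan is to carry out a Paley--Littlewood analysis of the commutator, leveraging the heat semigroup's smoothing on spectrally localized functions. First, expand via the definition of $\pl$,
\[ [e^{t\Delta},\pl](f,g) \;=\; \sum_{k<l-1}\bigl(e^{t\Delta}(\delta_k f\cdot \delta_l g) \;-\; \delta_k f\cdot e^{t\Delta}\delta_l g\bigr), \]
and observe that for $k<l-1$ each summand has Fourier support in an annulus at scale $2^l$ (multiplication by a function at scale $2^k\ll 2^l$ and action of $e^{t\Delta}$ both preserve the $2^l$-annular localization), so $\delta_m$ of the sum vanishes unless $m\sim l$.

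The key tool for a fixed pair $(\phi,\psi):=(\delta_k f,\delta_l g)$ is the Duhamel-type identity
\[ e^{t\Delta}(\phi\psi)-\phi\,e^{t\Delta}\psi \;=\; \int_0^t e^{(t-s)\Delta}\bigl[\Delta\phi\cdot e^{s\Delta}\psi + 2\nabla\phi\cdot\nabla e^{s\Delta}\psi\bigr]\,ds, \]
obtained by integrating $\partial_s\bigl[e^{(t-s)\Delta}(\phi\,e^{s\Delta}\psi)\bigr]$ from $s=0$ to $s=t$. Bernstein's inequality gives $\|\nabla^j\delta_k f\|_{L^\infty}\lesssim 2^{k(j-\alpha)}\|f\|_{\mathcal{C}^\alpha}$ for $j=1,2$, while the spectral support of $\delta_l g$ in $\{|\xi|\sim 2^l\}$ combined with the Fourier multiplier $e^{-s|\xi|^2}$ yields $\|e^{s\Delta}\delta_l g\|_{L^\infty}\lesssim e^{-cs 2^{2l}}2^{-l\beta}\|g\|_{\mathcal{C}^\beta}$ and an analogous estimate for $\nabla e^{s\Delta}\delta_l g$ with one extra factor $2^l$. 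The outer $e^{(t-s)\Delta}$ contributes a further decay $e^{-c(t-s)2^{2l}}$ because its argument is spectrally at scale $2^l$ too; the two heat decays telescope into $e^{-ct 2^{2l}}$, and the $s$-integration costs only a factor of $t$.

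Summing over $k<l-1$, the hypothesis $\alpha<1$ forces the dominant contribution to come from $k\sim l$: both $\sum_{k<l-1}2^{k(2-\alpha)}$ and $\sum_{k<l-1}2^{k(1-\alpha)}\cdot 2^l$ are $\lesssim 2^{l(2-\alpha)}$. Thus, for $m\sim l$,
\[ 2^{m\gamma}\|\delta_m[e^{t\Delta},\pl](f,g)\|_{L^\infty} \;\lesssim\; (t 2^{2l})\,e^{-ct 2^{2l}}\cdot 2^{l(\gamma-\alpha-\beta)}\,\|f\|_{\mathcal{C}^\alpha}\|g\|_{\mathcal{C}^\beta}. \]
Setting $x=t 2^{2l}$, the right-hand side factors as $x^{1+(\gamma-\alpha-\beta)/2}e^{-cx}\cdot t^{(\alpha+\beta-\gamma)/2}\|f\|_{\mathcal{C}^\alpha}\|g\|_{\mathcal{C}^\beta}$, and the first factor is bounded on $\R_+$ because $\gamma\ge\alpha+\beta$ makes the polynomial exponent at least $1$. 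Taking the supremum in $m$ delivers the claim; the low-frequency block $m=-1$ is handled separately by a direct $L^\infty$ bound on $p_t$ against the paraproduct kernel. The main obstacle is ensuring that the Bernstein gains $2^{jk}$ on the small-scale factor $\phi$ and the spectral smoothing $e^{-c\,t 2^{2l}}$ on the large-scale factor $\psi$ combine correctly, and this cooperation is precisely what the assumption $\alpha<1$ enforces, since it is what makes the $k$-summation concentrate at $k\sim l$.
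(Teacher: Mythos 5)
Your argument is correct, but there is nothing internal to compare it against: the paper does not prove Proposition \ref{come}, it simply quotes it from \cite[Proposition A.16]{MW}. The standard proof behind the cited result is, like yours, a Paley--Littlewood block analysis: one treats the commutator block by block, uses that each paraproduct block $\delta_k f\,\delta_l g$ with $k<l-1$ (hence $l\ge 1$) is spectrally supported in an annulus at scale $2^l$, invokes the Bernstein gain $\|\nabla \delta_k f\|_{L^\infty}\lesssim 2^{k(1-\alpha)}\|f\|_{\mathcal{C}^\alpha}$ (the only place $\alpha<1$ is needed) and the decay $e^{-ct2^{2l}}$ of the heat semigroup on that annulus; the single-block estimate is usually extracted from a first-order Taylor expansion of the low-frequency factor against the heat kernel, whereas you extract it from the Duhamel identity for $s\mapsto e^{(t-s)\Delta}\bigl(\phi\,e^{s\Delta}\psi\bigr)$. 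The two routes are equivalent in substance, and yours has the advantage of avoiding explicit kernel moment bounds. Two minor remarks: the separate treatment you announce for the output block $m=-1$ is unnecessary, since every paraproduct block has $l\ge 1$ and is therefore spectrally supported away from the origin, so the same annular estimate covers the finitely many $l$ contributing to any fixed $m\ge -1$; and boundedness of $x^{1+(\gamma-\alpha-\beta)/2}e^{-cx}$ on $(0,\infty)$ only needs a nonnegative exponent, which $\gamma\ge\alpha+\beta$ guarantees with room to spare, and this is also exactly what gives uniformity over all $t>0$, small or large.
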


\subsection{Wiener chaos and Nelson estimate}
Let us consider $\xi$ the space-time white noise, $\xi$ is a distribution and therefore $\xi(t,x)$ is not well-defined. We can however define for all $\varphi\in L^2(\R\times \mathbb{T}^d)$ the quantity $\int_{\R\times\mathbb{T}^d}\varphi(z)\xi(\mathrm{d}z)=\xi(\varphi)$ that verifies
$$\mathbb{E}[\xi(\varphi)^2]= \|\varphi\|_{L^2(\R\times \mathbb{T}^d)}^2.$$
We can furthermore define iterated Wiener-Itô integrals based on $\xi$\textcolor{black}{,} written $\xi^{\otimes k}(\varphi)$ for $k\ge 1$ and $\varphi\in L^2((\R\times \mathbb{T}^d)^k)$. We usually write
$$\xi^{\otimes k}(\varphi)=\int_{(\R\times \mathbb{T}^d)^k}\varphi(z_1,\dots,z_k)\xi(\mathrm{d}z_1)\cdots\xi(\mathrm{d}z_k).$$
We then consider
$$\mathcal{H}_k:=\{\xi^{\otimes}(\varphi),\ \varphi\in L^2((\R\times\mathbb{T}^d)^k)\}$$
which is the $k$-th homogeneous Wiener chaos with $\mathcal{H}_0=\mathbb{R}$. Since we have
$$L^2(\Omega,\mathcal{A},\mathbb{P})=\bigoplus_{k=0}^{+\infty} \mathcal{H}_k$$
we can for all $k\ge 0$ consider $\Pi_k$ the projection on $\mathcal{H}_k$. We have the following property:
\begin{lemma}
    For each $n\in \N$, the closure in $L^2(\Omega,\mathcal{A},\mathbb{P})$ of the linear span of the set
    $$\{\xi(\varphi_1)\cdots \xi(\varphi_k), \ k\le n , \ \varphi_1,\dots,\varphi_k\in L^2(\R\times \mathbb{T}^d)\}$$
    coincides with
    $$\mathcal{H}_{\le n}:=\bigoplus_{k=0}^n \mathcal{H}_k .$$
\end{lemma}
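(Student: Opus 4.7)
Call $S_n$ the closure in $L^2(\Omega,\mathcal{F},\mathbb{P})$ of the linear span of the set in the statement, and $V_n=\mathcal{H}_{\le n}$. The goal is to establish both inclusions $S_n\subset V_n$ and $V_n\subset S_n$. The bridge between products of Gaussians and iterated Wiener-Itô integrals is given by the Hermite polynomials, so the plan is to carry the argument through that identification.

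For the inclusion $S_n\subset V_n$, I would start from an arbitrary generator $\xi(\varphi_1)\cdots\xi(\varphi_k)$ with $k\le n$ and $\varphi_1,\dots,\varphi_k\in L^2(\R\times\mathbb{T}^d)$. Applying Gram-Schmidt, one can write each $\varphi_i$ as a linear combination of finitely many orthonormal functions $e_1,\dots,e_m$, so by multilinearity it suffices to treat products of the form $\xi(e_{i_1})\cdots\xi(e_{i_k})$. Grouping repeated indices, such a product is a product of monomials $\xi(e_j)^{\alpha_j}$ with $\sum_j\alpha_j\le k$, and since the $\xi(e_j)$ are independent standard Gaussians, writing each monomial in the Hermite basis $x^{\alpha}=\sum_{\beta\le\alpha}c_{\alpha,\beta}H_\beta(x)$ expresses the whole product as a finite linear combination of products $\prod_j H_{\beta_j}(\xi(e_j))$ with $\sum_j\beta_j\le k$. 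Each such product lies in $\mathcal{H}_{\sum_j\beta_j}$ by the Wiener-Itô isometry (it equals $\xi^{\otimes(\sum\beta_j)}$ evaluated on the symmetrization of the corresponding tensor product), hence in $V_n$.

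For the reverse inclusion $V_n\subset S_n$, it suffices to show $\mathcal{H}_k\subset S_n$ for every $k\le n$. By definition, $\mathcal{H}_k$ is the closure of the image of the iterated integral on $L^2((\R\times\mathbb{T}^d)^k)$, and since simple symmetric tensors $\varphi_1\otimes_s\cdots\otimes_s\varphi_k$ span a dense subspace, it is enough to handle these. For orthonormal $\varphi_1,\dots,\varphi_k$, the multiple integral $\xi^{\otimes k}(\varphi_1\otimes_s\cdots\otimes_s\varphi_k)$ equals a constant times the product $\prod H_{1}(\xi(\varphi_i))=\prod\xi(\varphi_i)$, which already belongs to $S_n$; the case of repeated indices $\varphi_i$ is handled by the identity $\xi^{\otimes k}(\varphi^{\otimes k})=H_k(\xi(\varphi))$ for $\|\varphi\|_{L^2}=1$, and $H_k$ being a polynomial of degree $k\le n$ its value lies in the linear span of $1,\xi(\varphi),\dots,\xi(\varphi)^k$, hence in $S_n$. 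A polarization identity extends this to arbitrary tensor products of non-orthogonal elements. Then density of simple tensors in $L^2((\R\times\mathbb{T}^d)^k)$ combined with the Wiener-Itô isometry, which guarantees $L^2$-continuity of $\xi^{\otimes k}$, transfers the inclusion to all of $\mathcal{H}_k$.

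The main obstacle is the algebraic bookkeeping linking powers of Gaussians to Hermite polynomials and, equivalently, to multiple integrals of symmetric tensors; once one commits to the Hermite polynomial dictionary, the remaining steps are standard multilinearity, closure, and density arguments.
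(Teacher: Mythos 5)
The paper does not actually prove this lemma: it is stated in the appendix as a classical background fact from Gaussian analysis (it is, for instance, Theorem 2.6 in Janson's \emph{Gaussian Hilbert Spaces} or a consequence of Section 1.1 of Nualart's \emph{The Malliavin Calculus and Related Topics}), so there is no paper proof to compare against. Your argument is the standard one through Hermite polynomials, and both inclusions are correctly carried out: for $S_n\subset\mathcal{H}_{\le n}$ you pass to an orthonormal system by Gram--Schmidt, expand monomials in the Hermite basis, and invoke the identification of $\prod_j H_{\beta_j}(\xi(e_j))$ with an element of $\mathcal{H}_{\sum_j\beta_j}$; for $\mathcal{H}_{\le n}\subset S_n$ you reduce by polarization to diagonal tensors, use $\xi^{\otimes k}(\varphi^{\otimes k})=H_k(\xi(\varphi))$ for $\|\varphi\|=1$, and pass to the closure via the Wiener--It\^o isometry and density of simple symmetric tensors. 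The only point worth making explicit is that the index $k=0$ (empty product, yielding the constant $1$) must be included in the generating set, both for the equality with $\mathcal{H}_{\le n}=\bigoplus_{k=0}^n\mathcal{H}_k$ to hold at all and for your step where $H_k(\xi(\varphi))$, which for even $k$ has a nonzero constant term, is placed in the span of $1,\xi(\varphi),\dots,\xi(\varphi)^k$. Also note that in the second inclusion the separate treatment of orthonormal $\varphi_i$'s is redundant: polarization already reduces everything to diagonal tensors, so that case subsumes the orthonormal one. Neither observation is a gap; the proof is correct.
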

Let us now state Nelson's estimate which is a key inequality when one deals with Wiener chaos.
\begin{proposition} \label{nelson}
    For every $n\ge 1$, there exists a constant $C_n<+\infty$ such that for every $X\in \mathcal{H}_{\le n}$ and $p\ge 2$ we have
    $$\mathbb{E}[|X|^p]^{\frac{1}{p}}\le C_n (p-1)^\frac{n}{2}\mathbb{E}(X^2)^{\frac{1}{2}}$$
\end{proposition}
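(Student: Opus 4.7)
The plan is to derive Nelson's estimate from the classical hypercontractivity property of the Ornstein--Uhlenbeck semigroup, which acts diagonally on the Wiener chaos decomposition. Recall that the OU semigroup $(P_t)_{t\ge 0}$ on $L^2(\Omega,\mathcal{F},\mathbb{P})$ acts on $\mathcal{H}_k$ as multiplication by $e^{-kt}$, i.e.\ $P_t X_k = e^{-kt} X_k$ for every $X_k\in\mathcal{H}_k$. Nelson's hypercontractivity theorem states that for $1<q<p$ and $t>0$ with $e^{-2t}\le (q-1)/(p-1)$, one has
$$\|P_t Y\|_{L^p}\le \|Y\|_{L^q},\qquad Y\in L^q(\Omega).$$
I would invoke this result as a black box (it is truly the deep input here and the main conceptual obstacle; all the rest is algebraic manipulation).

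Specializing $q=2$ and choosing $t$ such that $e^{-2t}=1/(p-1)$ (which is admissible since $p\ge 2$), I first treat a pure chaos $X_k\in\mathcal{H}_k$. Setting $Y_k:=e^{kt}X_k\in\mathcal{H}_k$, the diagonal action gives $P_tY_k=X_k$, whence
$$\|X_k\|_{L^p}=\|P_tY_k\|_{L^p}\le \|Y_k\|_{L^2}=e^{kt}\|X_k\|_{L^2}=(p-1)^{k/2}\|X_k\|_{L^2}.$$
This already gives the proposition on each $\mathcal{H}_k$ with the optimal constant, and uses the hypothesis $p\ge 2$ only through the admissibility condition on $t$.

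For general $X\in\mathcal{H}_{\le n}$, decompose $X=\sum_{k=0}^n \Pi_k X$ orthogonally. The triangle inequality combined with the previous bound gives
$$\|X\|_{L^p}\le \sum_{k=0}^n\|\Pi_k X\|_{L^p}\le \sum_{k=0}^n (p-1)^{k/2}\|\Pi_k X\|_{L^2}\le (n+1)(p-1)^{n/2}\max_{0\le k\le n}\|\Pi_k X\|_{L^2}.$$
Since the chaoses are mutually orthogonal in $L^2$, Pythagoras yields $\|\Pi_k X\|_{L^2}\le \|X\|_{L^2}$ for every $k$, and setting $C_n:=n+1$ finishes the proof. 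As remarked above, the only non-elementary step is invoking Nelson's hypercontractivity; the passage from pure to inhomogeneous chaos and the scaling in $(p-1)^{n/2}$ follow by the elementary algebra above.
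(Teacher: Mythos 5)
The paper states Proposition~\ref{nelson} in the appendix as a standard fact and gives no proof of its own, so there is nothing to compare your argument against; I can only assess correctness. Your proof is correct and follows the classical route: taking Nelson's hypercontractivity of the Ornstein--Uhlenbeck semigroup as a black box, together with the diagonal action $P_t|_{\mathcal{H}_k}=e^{-kt}\,\mathrm{Id}$, yields the sharp bound $\|X_k\|_{L^p}\le (p-1)^{k/2}\|X_k\|_{L^2}$ on each pure chaos by choosing $e^{-2t}=1/(p-1)$, and the passage to inhomogeneous chaos via the triangle inequality, the monotonicity of $k\mapsto(p-1)^{k/2}$ (which is where $p\ge 2$ enters a second time), and Pythagoras $\|\Pi_k X\|_{L^2}\le\|X\|_{L^2}$ is sound, giving $C_n=n+1$. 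A very minor point: for $p=2$ the hypercontractive inequality with $q=2$ is vacuous but the desired bound is trivially an equality, so no case distinction is actually needed; and one could improve $C_n$ to $\sqrt{n+1}$ by applying Cauchy--Schwarz to $\sum_k (p-1)^{k/2}\|\Pi_k X\|_{L^2}$ rather than the crude maximum bound, though the proposition only asks for finiteness of $C_n$, so your choice is fine.
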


\section*{Acknowledgments}
I would like to thank my PhD supervisor Nils Berglund, who introduced me to the subject, provided me with the necessary bibliography to work on it, and edited my article several times. \textcolor{black}{I would also like to thank the referee for their careful reading of my article and the very detailed report they sent me.} This work was supported by my PhD grant from the \'Ecole Normale Sup\'erieure. 

\end{document}